\newtheorem{thrm}{Theorem}
\newtheorem{cor}{Corollary}
\newtheorem{lem}{Lemma}
\newtheorem{remark}{Remark}
\newcommand{\ind}{\mathbbm{1}}
\newcommand{\diam}{\mbox{diam }}
\newcommand{\const}{\mbox{const.}}
\newcommand{\id}{\mbox{id}}
\newcommand{\eps}{\varepsilon}
\providecommand{\abs}[1]{\lvert \, #1 \, \rvert}
\providecommand{\Abs}[1]{\biggl\lvert \, #1 \, \biggr \rvert}
\providecommand{\floor}[1]{\left\lfloor \, #1 \, \right\rfloor}
\def\T{\hat{T}}
\def\ox{\vec{x}}
\def\a{\alpha}
\def\ox{\overline{x}}
\def\g {\gamma}
\def\Le{\text{Leb}}
\newcommand{\bea}[1]{\begin{eqnarray}\label{#1}}
\newcommand{\eea}{\end{eqnarray}}
\title[Limiting law for arbitrary sets]{Limiting entry times distribution for arbitrary null sets}
\begin{document}

\author{Nicolai Haydn}
\thanks{N Haydn, Mathematics Department, USC,
Los Angeles, 90089-2532. E-mail: {\tt {nhaydn@math.usc.edu}}}
 \author{Sandro Vaienti}
 \thanks{S Vaienti, Aix Marseille Universit\'e, Universit\'e de Toulon, CNRS, CPT, UMR 7332, Marseille, France.
 E-mail: {\tt {vaienti@cpt.univ-mrs.fr}}}

\date{\today}



\begin{abstract}
We describe an approach that allows us to deduce the limiting
return times distribution for arbitrary sets to be compound Poisson distributed.
We establish a relation between the limiting return times distribution and
the probability of the cluster sizes, where clusters consist of the portion of
points that have finite return times in the limit where random return times
go to infinity. In the special case of periodic points we recover the known
P\'olya-Aeppli distribution which is associated with geometrically distributed
cluster sizes. We apply this method to several examples the most
important of which is synchronisation of coupled  map lattices.
For the invariant absolutely continuous measure we establish that the returns to the
 diagonal is compound Poisson distributed where the coefficients are
 given  by certain integrals along the diagonal.
\end{abstract}

\maketitle
\tableofcontents

\section{Introduction}

Return times statistics have recently been studied quite extensively. For
equilibrium states for H\"older continuous potentials on Axiom A systems
in particular, Pitskel~\cite{Pit91} showed that generic points  have in the limit
Poisson distributed return times if one uses cylinder neighbourhoods.
 In the same paper he also showed that this result applies only almost
 surely and shows that at periodic points the return times distribution
 has a point mass at the origin which corresponds to the periodicity
 of the point. It became clear later that in fact for every non-periodic
 point the return times are in the limit Poisson distributed while
 for periodic points the distribution is P\'olya-Aeppli which
 is a Poisson distribution compounded with a geometric distribution of clusters,
 where the parameter for the geometric distribution is the value
 given by Pitskel. For $\phi$-mixing systems in a symbolic setting, this dichotomy follows
 from~\cite{Aba06}. For more general classes of dynamical systems with various kind of mixing properties,
 we showed in our paper \cite{HV09} that  limiting return times distributions at periodic points
 are almost everywhere compound Poissonian;
moreover we derived error terms for the
convergence to the limiting distribution in many other settings. The paper~\cite{KR14} showed that all
$\psi$-mixing shifts the limiting distributions of the numbers of multiple recurrencies to shrinking
cylindrical neighborhoods of all points are close either to Poisson or to compound Poisson distributions.
In the classical setting this dichotomy was shown in~\cite{HP14} using the Chen-Stein method
for $\phi$-mixing measures, where
for cylinder sets the limiting distribution was found to be Poisson at all non-periodic points.
Extension to non-uniformly hyperbolic dynamical systems are provided in \cite{FFT13},
which establishes and discusses the connection between the laws of Return Times Statistics and
Extreme Value Laws  (see also the book~\cite{B16} for a panorama and an account on extreme value theory
and point processes applied to dynamical systems). For planar dispersing billiards the return times distribution
 is, in the limit, Poisson for metric balls almost everywhere w.r.t.\ the SRB measure:
 this has been proved in~\cite{FHN14}. Convergence in distribution for the rescaled
 return times in planar billiard has been shown in~\cite{PS10} where the same authors proved that
  the distribution of the number of visits to a ball with vanishing radius  converges to a Poisson distribution
   for some nonuniformly hyperbolic invertible dynamical systems which are modeled by a
   Gibbs-Markov-Young tower~\cite{PS16}. Similarly~\cite{CC13} established Poisson approximation
   for metric balls for  systems modelled by a Young tower whose return-time function has a
   exponential tail and with one-dimensional unstable manifolds, which included the H\'enon attractor.
   For polynomially decaying correlations this was done in~\cite{HW16} where also the restriction on the
   dimension of the unstable manifold was dropped.
    In a more geometric setting the limiting distribution for shrinking balls was shown in~\cite{HY17}.
     Spatio-temporal Poisson processes obtained from recording not only the successive times of visits to a set,
     but also the positions, have been recently studied in~\cite{PS18}.
     Another kind of extension has been proposed in~\cite{FFM17}, which  studied marked point processes
     associated to extremal observations  corresponding to exceedances of high thresholds.
     Finally distributions of return to different sets of cylinders have been recently  considered in~\cite{KY18}.

In the current paper we look at a more general
 setting which allows us to find the limiting return times distribution
 to an arbitrary zero measure set $\Gamma$ by looking at the
 return times distribution of a neighbourhood $B_\rho(\Gamma)$
 on a time scale suggested by Kac's lemma.
 For the approximating sets we then show that the
 return times are close to compound binomially distributions
 (Theorem~\ref{helper.theorem}), which in the limit
 converges to a compound Poissonian.
 We show this in a geometric setup that requires that the correlation
 functions decay at least polynomially. The slowest rate required
 depends on the regularity of the invariant measure.

 We then apply this result to some examples which include
 the standard periodic point setting. It also allows us to look
 at coupled map lattices, where the diagonal set is invariant.
 The return times statistics then expresses the degree to which
 neighbouring points are synchronised.

 In the next section we describe the systems we want to
 consider and state the main result, Theorem~\ref{thm1}.
 In Section~\ref{return.times} we connect the distribution of
 the return times functions to the probabilities of the cluster
 sizes which are the parameters that describe the limiting
 distribution.
 Section~\ref{binomial} consists of a very general approximation
 theorem that allows us to measure how close a return times
 distribution is to being compound binomial.
 Section~\ref{proof.theorem1} contains the proof of the main result.
 Section~\ref{examples} has some examples including the pathological Smith
 example and standard periodic points.
 Section~\ref{synchronisation} deals with coupled map lattices, where
 the maps that are coupled are expanding interval maps.
 There we show that for the absolutely continuous invariant measure
  the parameters for the compound Poisson limiting distribution
 are given by integrals along the diagonal. In particular one sees
 that is this case the parameters are in general not geometrical.

  \section{Compound Poisson Distribution}

  An integer valued random variable $W$ is compound Poisson distributed
  if there are i.i.d.\ integer valued random variables $X_j\ge 1$, $j=1,2,\dots$,
  and an independent Poisson distributed random variable $P$ so
  that $W=\sum_{j=1}^PX_j$. The Poisson distribution $P$ describes the
  distribution of clusters whose sizes are described by the random variables $X_j$
  whose probability densities are given by
values $\lambda_\ell=\mathbb{P}(X_j=\ell)$, $\ell=1,2,\dots$.

We say a probability measure $\tilde\nu$ on $\mathbb{N}_0$ is compound
Poisson distributed with parameters $s\lambda_\ell$, $\ell=1,2,\dots$,
if its generating function $\varphi_{\tilde\nu}$ is given
$\varphi_{\tilde\nu}(z)=\exp\int_0^\infty(z^x-1)\,d\rho(x)$,
where $\rho$ is the measure on
 $\mathbb{N}$ defined by $\rho =\sum_\ell s\lambda_\ell\delta_\ell$,
with $\delta_\ell$  being the point mass at $\ell$.
If we put $L=\sum_\ell s\lambda_\ell$
then $L^{-1}\rho$ is a probability measure and the random variable $W=\sum_{j=1}^PX_j$
is compound Poisson distributed, where $P$ is Poisson distributed with parameter $L$ and
$X_j,  j=1,2,\dots$, are i.i.d.\ random variables with distribution
$\mathbb{P}(X_j=\ell)=\lambda_\ell=L^{-1}s\lambda_\ell$,
$\ell=1,2,\dots$. In our case $L=s$ and moreover $\mathbb{E}(W)=s\mathbb{E}(X_0)$.
 In the special case $X_1=1$ and  $\lambda_\ell=0\forall \ell\ge2$ we recover the
 Poisson distribution $W=P$.
 The generating function is then $\varphi_{W}(z)=\exp(-s(1-\varphi_X(z)))$, where
 $\varphi_X(z)=\sum_{\ell=1}^\infty z^\ell\lambda_\ell$ is the generating function of $X_j$.

 An important non-trivial compound Poisson distribution is the P\'olya-Aeppli
 distribution which happens when  the $X_j$ are geometrically distributed,
 that is $\lambda_\ell=\mathbb{P}(X_\ell)=(1-p)p^{\ell-1}$ for $\ell=1,2,\dots$, for some $p\in(0,1)$.
 In this case
 $$
 \mathbb{P}(W=k)=e^{-s}
\sum_{j=1}^kp^{k-j}(1-p)^j\frac{s^j}{j!}\binom{k-1}{j-1}
$$
and in particular $\mathbb{P}(W=0)=e^{-s}$.
In the case of $p=0$ this reverts back to the straight Poisson distribution.

   In our context when we count limiting
  returns to small sets, the Poisson distribution gives the distribution
  of clusters which for sets with small measure happens on a large
  timescale as suggested by Kac's formula. The number of returns
  in each cluster is given by the i.i.d.\ random variables $X_j$.
  These returns are on a fixed timescale and nearly independent of
  the size of the return set as its measure is shrunk to zero.

\section{Assumptions and main results} \label{assumptions}

\subsection{The counting function}
Let $M$ be a manifold and $T:M\to M$ a $C^2$ local diffeomorphism with the properties described below in
the assumptions. We envisage both cases of  global invertible maps eventually with singularities  and maps which are locally injective on a suitable partition of $M.$  Let $\mu$ be a $T$-invariant Borel probability measure on $M$.

For a subset $U\subset M$, $\mu(U)>0,$  we define the counting function
\vspace{-0.2cm}
\begin{equation*}
\xi^{t}_{U}(x)=\sum_{n=0}^{\floor{t/\mu(U)}} {\ind_{U}} \circ T^n(x).
\end{equation*}
which tracks the number of visits a trajectory of the point $x \in M$ makes to the set $U$
on an orbit segment of length $N=\floor{t/\mu(U )}$, where $t$ is a positive
parameter.
(We often omit the sub- and superscripts and simply use $\xi(x)$.)

\subsection{The hyperbolic structure and cylinder sets}
Let $\Gamma^u$ be a collection of unstable leaves $\gamma^u$
and  $\Gamma^s$ a collection of stable leaves $\gamma^s$. We assume
that $\gamma^u\cap\gamma^s$ consists of a single point for all $(\gamma^u,\gamma^s)\in\Gamma^u\times\Gamma^s$.
The map $T$ contracts along the stable leaves (need not to be uniform) and similarly $T^{-1}$
contracts along the unstable leaves.

For an unstable leaf $\gamma^u$ denote by $\mu_{\gamma^u}$ the
disintegration of $\mu$ to the $\gamma^u$. We assume that $\mu$
has a product like decomposition
$d\mu=d\mu_{\gamma^u}d\upsilon(\gamma^u)$,
where $\upsilon$ is a transversal measure. That is, if $f$ is a function on $M$ then
$$
\int f(x)\,d\mu(x)
=\int_{\Gamma^u} \int_{\gamma^u}f(x)\,d\mu_{\gamma^u}(x)\,d\upsilon(\gamma^u)
$$

If $\gamma^u, \hat\gamma^u\in\Gamma^u$ are two unstable leaves then the holonomy map $\Theta:\gamma^u\to \hat\gamma^u$ is defined by $\Theta(x)=\hat\gamma^u\cap\gamma^s(x)$ for $x\in\gamma^u$,  where $\gamma^u(x)$ be the local unstable leaf through $x$.

Let us denote by
$J_n=\frac{dT^n\mu_{\gamma^u}}{d\mu_{\gamma^u}}$
the Jacobian of the map $T^n$ with respect to the measure $\mu$
in the unstable direction.

Let $\gamma^u$ be a local unstable leaf.
Assume there exists $R>0$ and  for every $n\in\mathbb{N}$ finitely many
 $y_k\in T^n\gamma^u$ so that
$T^n\gamma^u\subset\bigcup_k B_{R,\gamma^u}(y_k)$,
where $B_{R,\gamma^u}(y)$ is the embedded $R$-disk centered at $y$ in the unstable leaf
 $\gamma^u$.
Denote by $\zeta_{\varphi,k}=\varphi(B_{R,\gamma^u}(y_k))$ where $\varphi\in \mathscr{I}_n$
 and $\mathscr{I}_n$ denotes the  inverse branches of $T^n$.
 We call $\zeta$ an $n$-cylinder. In the case of piecewise expanding endomorphisms in any dimension, we will define an $n$-cylinder $\zeta_n$ as an element of the join partition $\mathcal{A}^n:= \bigvee_{j=0}^{n-1}T^{-j}\mathcal{A},$ where $\mathcal{A}$ is the initial partition into subsets of monotonicity for the map $T.$

\subsection{Assumptions}
We shall make two sets of assumptions, the first two will be on the map
and the properties of the invariant measure per se, while Assumptions~(IV),
(V) and~(VI) will involve the approximating sets of $\Gamma$. The sets $\mathcal{G}_n$
account for possible discontinuity sets of the map where the derivative might become
singular in a controlled way.\\
(I) {\em Overlaps of cylinders:} There exists a constant $L$ so that the number of overlaps
$N_{\varphi,k}=|\{\zeta_{\varphi',k'}: \zeta_{\varphi,k}\cap\zeta_{\varphi',k'}\not=\varnothing,
\varphi'\in\mathscr{I}_n\}|$
is bounded by $L$ for all $\varphi\in \mathscr{I}_n$ and for all $k$ and $n$.
This follows from the fact that
$N_{\varphi,k}$ equals $|\{k': B_{R,\gamma^u}(y_k)\cap B_{R,\gamma^u}(y_{k'})\not=\varnothing\}|$
which is uniformly bounded by some constant $L$. For endomorphisms the analogous  requirement will  be that there exists $\iota>0$ such that for any $n$ and any $n$-cylinder $\zeta_n\in \mathcal{A}^n$ we have $\mu(T^n\zeta_n)>\iota.$  \\
(II) {\em Decay of correlations:} There exists a decay function $\mathcal{C}(k)$ so that
$$
\left|\int_MG(H\circ T^k)\,d\mu-\mu(G)\mu(H)\right|
\le \mathcal{C}(k)\|G\|_{Lip}\|H\|_\infty\qquad\forall k\in\mathbb{N},
$$
for functions $H$ which are constant on local stable leaves $\gamma^s$ of $T$.
The functions $G:M\to\mathbb{R}$ are Lipschitz continuous w.r.t.\ the given metric on $M$.
\\
(III)   Assume there are sets $\mathcal{G}_n$ so that \\
\indent (i) {\em Non-uniform setsize:} $\mu(\mathcal{G}_n^c)=\mathcal{O}(n^{-q})$
for some positive $q$.\\
\indent (ii) {\em Distortion:} $\frac{J_n(x)}{J_n(y)}=\mathcal{O}(\omega(n))$ for all $x,y\in\zeta$,
$\zeta\cap\mathcal{G}_n^c=\varnothing$ for $n\in \mathcal{N}$, where
$\zeta$ are $n$-cylinders in unstable leaves $\gamma^u$ and $\omega(n)$ is a non-decreasing
sequence.\\
\indent (iii) {\em Contraction:} There exists a $\kappa > 1$, so that
$\diam\zeta\le n^{-\kappa}$ for all $n$-cylinders $\zeta$ for which $\zeta\cap\mathcal{G}_n^c=\varnothing$
 and all $n$.\\

 \vspace{3mm}

 \noindent
 Now assume $\Gamma\subset M$ is a zero measure set that is approximated by
 sets $U=B_\rho(\Gamma)$. We then make the following assumptions:\\
(IV) {\em Dimension:} There exist $0<d_0<d_1$ such that
$\rho^{d_0}\ge\mu(B_\rho(\Gamma))\ge\rho^{d_1}$.\\
(V) {\em Unstable dimension:} There exists a $u_0$ so that
$\mu_{\gamma^u}(B_\rho(\Gamma))\le C_1\rho^{u_0}$ for all $\rho>0$ small enough
and for almost all $x\in \gamma^u$, every unstable leaf $\gamma^u$.\\
(VI) {\em Annulus type  condition:} Assume that for some $\eta, \beta>0$:
$$
\frac{\mu(B_{\rho+r}(\Gamma)\setminus B_{\rho-r}(\Gamma))}{\mu(B_\rho(\Gamma))}
= \mathcal{O}(r^\eta\rho^{-\beta})
$$
for every $r< \rho_0$ for some $\rho_0<\rho$ (see remark below).

Here and in the following we use the notation $x_n\lesssim y_n$ for $n=1,2, \dots$, to mean that
there exists a constant $C$ so that $x_n<Cy_n$ for all $n$.
 As before let
$T:\Omega\circlearrowleft$ and $\mu$ a $T$-invariant probability measure
on $\Omega$. For a subset $U\subset\Omega$ we put $I_i=\ind_U\circ T^i$ and define
$$
Z^L=Z^L_U=\sum_{i=0}^{2L}I_i
$$
where $L$ is a (large) positive integer.

Let us now formulate our main result.

\begin{thrm}\label{thm1}
Assume that the map $T: M\to M$ satisfies the assumptions (I)--(VI) where $\mathcal{C}(k)$ decays at
least polynomially with power $p>\frac{\frac\beta\eta+d_1}{d_0}$. Moreover we assume that
$d_0>\max\{\frac{d_1}{q-1},\frac\beta{\kappa\eta-1}\}$ and $\kappa u_0>1$
Assume $\omega(j)\lesssim j^{\kappa'}$ for some $\kappa'\in[0,\kappa u_0-1)$.
Let $\Gamma\subset M$ be a zero measure set, let $t>0$ and put
 $\lambda_\ell=\lim_{K\to\infty}\lambda_\ell(K)$,
where
$$
\lambda_\ell(K)=\mathbb{P}(Z^K_{B_\rho(\Gamma)}=\ell)/
\mathbb{P}(Z^K_{B_\rho(\Gamma)}\ge1).
$$

Then
$$
\mathbb{P}(\xi_{B_\rho(\Gamma)}^t=k)\longrightarrow\nu(\{k\})
$$
as $\rho\to0$, where $\nu$ is the compound Poisson distribution for the parameters
$s\lambda_\ell$, where $s=\alpha_1t$.
\end{thrm}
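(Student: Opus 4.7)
The strategy is to view $\xi^t_U$ (with $U=B_\rho(\Gamma)$) as a sum over disjoint time blocks and apply the compound binomial approximation Theorem~\ref{helper.theorem} to reduce matters to the distribution of a single block count $Z^L_U$. One then identifies the cluster probabilities $\lambda_\ell$ on one side and lets the number of blocks grow on the other so that the binomial passes to a Poisson, yielding the compound Poisson law with parameters $s\lambda_\ell$.

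First I would establish existence of $\lambda_\ell=\lim_{K\to\infty}\lambda_\ell(K)$ together with $\alpha_1=\lim_{K,\rho}\mathbb{P}(Z^K_U\ge 1)/\mu(U)$ via the cluster analysis of Section~\ref{return.times}: short-range clustering of returns is controlled by the local dynamics near $\Gamma$ and stabilises once $K$ exceeds the effective correlation length. Next I would partition the orbit segment of length $N=\floor{t/\mu(U)}$ into $M\sim N/(2L+g)$ windows of length $2L+1$ separated by gaps of length $g$, so that inside window $j$ the visit count equals $Z^L_U\circ T^{(2L+g)j}$ and the discarded gap visits contribute at most $Mg\mu(U)\lesssim tg/L$ to the mean, negligible if $g/L\to 0$. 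Theorem~\ref{helper.theorem} then gives that $\xi^t_U$ is close in total variation to a compound binomial $\sum_{j=1}^M Y_j$ with $Y_j\stackrel{d}{=}Z^L_U$; in the limit $M\,\mathbb{P}(Z^L_U\ge 1)\to\alpha_1 t=s$ and $\lambda_\ell(L)\to\lambda_\ell$, so the compound binomial converges to the compound Poisson with parameters $s\lambda_\ell$.

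The quantitative price is paid in the error terms that Theorem~\ref{helper.theorem} asks one to bound. The mixing error uses (II) after replacing $\ind_U$ by a Lipschitz mollifier of width $r$: the replacement costs, via the annulus assumption (VI), a total $\mathcal{O}(Mr^\eta\rho^{-\beta})$, while the decorrelation over gaps of size $g$ against a mollifier of Lipschitz constant $1/r$ is $\mathcal{O}(M\mathcal{C}(g)/r)$. Short-return corrections inside a block require controlling $\mu_{\gamma^u}(\zeta\cap U)$ for cylinders $\zeta$ of length $n$: combining (V), the distortion (III)(ii) with $\omega(n)\lesssim n^{\kappa'}$, and the contraction (III)(iii) yields a per-cylinder bound summable over $n$ thanks to $\kappa u_0>1$ and $\kappa'<\kappa u_0-1$. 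The bad-set contribution $\mu(\mathcal{G}_n^c)=\mathcal{O}(n^{-q})$ from (III)(i), summed up to $n\sim\mu(U)^{-1}\le\rho^{-d_1}$, is harmless thanks to $d_0>d_1/(q-1)$.

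The main obstacle is the joint calibration of $L$, $g$, $r$ as $\rho\to 0$ so that every one of these errors tends to $0$ simultaneously. Optimising the mollification scale $r$ balances $Mr^\eta\rho^{-\beta}$ against $M\mathcal{C}(g)/r$, and this balance, combined with $M\asymp\rho^{-d_0}$, forces the polynomial mixing exponent $p>(\beta/\eta+d_1)/d_0$ that appears in the hypothesis. Similarly $d_0>\beta/(\kappa\eta-1)$ is what allows a block length $L$ large enough for clusters not to straddle two blocks (i.e.\ $L\mu(U)\to 0$) while still compatible with the $n$-cylinder scale from (III). Once this calibration is in place, the compound-Poisson limit follows by a direct generating-function computation: the product $(\mathbb{E}[z^{Y_1}])^M$ with $M\,\mathbb{P}(Y_1\neq 0)\to s$ tends to $\exp\bigl(s(\varphi_X(z)-1)\bigr)$, which is the generating function of the claimed compound Poisson $\nu$.
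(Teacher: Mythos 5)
Your overall route is the paper's: block the orbit of length $N=\floor{t/\mu(U)}$, invoke the compound binomial approximation of Theorem~\ref{helper.theorem}, identify the cluster probabilities $\lambda_\ell$ from a single block, and pass from compound binomial to compound Poisson. However, there are two genuine gaps. First, in the mixing error you apply Assumption~(II) with only the first factor mollified at scale $r$, against the raw event that the later blocks register $q-u$ visits. Assumption~(II) only controls correlations against functions $H$ that are \emph{constant on local stable leaves}, and $\ind_{V_\Delta^M=q-u}$ is not such a function in the hyperbolic setting of the theorem. One must sandwich it between indicators of unions of stable leaves, and it is precisely this step --- using the contraction $\diam T^n\gamma^s\lesssim n^{-\kappa}$ off the bad sets, the annulus condition~(VI), and $\mu(\mathcal{G}_n^c)=\mathcal{O}(n^{-q})$ --- that produces the error $\rho^{v(\kappa\eta-1)-\beta}+\rho^{\epsilon}$ and hence the hypotheses $d_0>\beta/(\kappa\eta-1)$ and $d_0>d_1/(q-1)$. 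Your attribution of $d_0>\beta/(\kappa\eta-1)$ to ``clusters not straddling two blocks'' (a condition like $L\mu(U)\to0$, which involves none of $\beta,\kappa,\eta$) shows this step is missing; without it the application of~(II) is unjustified. (A smaller slip in the same estimate: the mollification cost per window is $\mathcal{O}\bigl(L\,r^\eta\rho^{-\beta}\mu(U)\bigr)$ by~(VI), so the total is $\mathcal{O}(t\,r^\eta\rho^{-\beta})$, not $\mathcal{O}(M r^\eta\rho^{-\beta})$; your final exponent condition happens to match the paper's, but the bookkeeping as written does not.)

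Second, your limiting scheme is a single joint calibration in which the block length $L\to\infty$ together with $\rho\to0$, with the claim that all errors can be made to vanish simultaneously. But the Lipschitz regularisation of the block event $\{Z_0=u\}$ --- an intersection of up to $2L+1$ preimages of $U$ and $U^c$ --- has Lipschitz norm of order $a^{2L+1}/r$, not $1/r$ as you state, so the correlation term is $a^{2L+1}\mathcal{C}(\Delta)/r$, exponential in the block length; moreover $\lambda_\ell(L,U_\rho)\to\lambda_\ell$ and the short-return error (which decays in $L$ but not in $\rho$) are themselves iterated limits. The paper sidesteps all of this by a genuine double limit: $K$ is held fixed while $\rho\to0$, so $a^{2K+1}$ is a constant, the compound binomial converges to the compound Poisson with parameters $t\lambda_\ell(K)$ up to a residual $\mathcal{O}(tK^{-\sigma'})$, and only afterwards does $K\to\infty$, with $s=\alpha_1 t$ supplied by Lemma~\ref{lemma.entry}. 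Your joint limit could likely be repaired (take $L\sim c\log(1/\rho)$ with $c$ small and a diagonal argument for $\lambda_\ell$), but as written the existence of the calibration is asserted rather than proved, and the exponential factor in $L$ is the very obstruction that the paper's two-step limit is designed to avoid.
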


\begin{remark}\label{PR}
 In the classical case when the limiting set consists of a single point, namely $\Gamma=\{x\}$,
then we recover the known results which are the two cases when  $x$ is a non-periodic point and
when $x$ is a periodic point. If $x$ is a non-periodic point then $\lambda_1=1$ and $\lambda_\ell=0$ for $\ell\ge2$
which implies that the limiting distribution is Poissonian. Previously this was shown in~\cite{CC13}
for exponentially decaying correlations and in~\cite{HW16} for polynomially decaying correlations.
Another more general version is given in~\cite{HY17}. In the case when $x$ is periodic
we obtain that $\lambda_\ell=(1-p)p^{\ell-1}$ for all $\ell=1,2,\dots$, and where
$p$ is given by the limit $\lim_{\rho\to0} \frac{\mu(B_\rho(x)\cap T^{-m}B_\rho(x))}{\mu(B_\rho(x))}$ if the limit exists and
where $m$ is the minimal period of $x$. The limiting distribution in this case is P\'olya-Aeppli.
 Pitskel~\cite{Pit91} obtained this value for equilibrium states for
Axiom A systems and a more general description is found in~\cite{HV09}.
See also section~\ref{section.periodic.points}.
\end{remark}

The proof of  Theorem~\ref{thm1} is given in  Section~\ref{proof.theorem1}.
In the following section we will express the parameters $\lambda_\ell$ in terms of the limiting
return times distribution.


\section{Return times}\label{return.times}

In this section we want to relate the parameters $\lambda_k$ which determine the
limiting probability of a $k$-cluster to occur to the return times distribution.
To account for a more general setting, let $T:\Omega\circlearrowleft$ be a measurable
map on a space $\Omega$.
For a subset $U\subset\Omega$ we define the first entry/return time
 $\tau_U$ by $\tau_U(x)=\min\{j\ge1: T^j\in U\}$. Similarly we get higher order
 returns by defining recursively $\tau_U^\ell(x)=\tau_U^{\ell-1}+\tau_U(T^{\tau_U^{\ell-1}}(x))$
 with $\tau_U^1=\tau_U$. We also write $\tau_U^0=0$ on $U$.

Let $U_n\subset \Omega$, $n=1,2,\dots$, be a nested sequence of sets and put
$\Lambda=\bigcap_nU_n$.
For  $K$ be a large number which later will go to infinity and
assume the limits
  $\hat\alpha_\ell(K)=\lim_{n\to\infty}\mu_{U_n}(\tau_{U_n}^{\ell-1}\le K)$
  exist for $K$ large enough.
  Since $\{\tau_{U_n}^{\ell+1}\le K\}\subset \{\tau_{U_n}^{\ell}\le K\}$ we get that
  $\hat\alpha_\ell(K)\ge\hat\alpha_{\ell+1}(K)$ for all $\ell$ and in particular $\hat\alpha_1(K)=1$.
  By monotonicity the limits $\hat\alpha_\ell=\lim_{K\to\infty}\hat\alpha_\ell(K)$
  exist and satisfy $\hat\alpha_1=1$ and $\hat\alpha_\ell\ge\hat\alpha_{\ell+1}$ $\forall \ell$.

  Now assume that moreover the limits $p_i^\ell=\lim_{n\to\infty}\mu_{U_n}(\tau_{U_n}^{\ell-1}=i)$
  of the conditional size of the level sets of the $\ell$th return time $\tau_{U_n}^\ell$
  exist for $i=0,1,2,\dots$ (clearly $p_i^\ell=0$ for $i\le\ell-2$). Then we can formulate the following relation.

  \begin{lem} For $\ell=2,3,\dots$:
  $$
  \hat\alpha_\ell=\sum_ip_i^\ell.
  $$
  \end{lem}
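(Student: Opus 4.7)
The plan is to start from the obvious finite-disjoint-union decomposition and then pass to the two iterated limits in sequence. For each fixed $K$ and each $n$, the event $\{\tau_{U_n}^{\ell-1}\le K\}$ is the disjoint union $\bigsqcup_{i=0}^{K}\{\tau_{U_n}^{\ell-1}=i\}$, so by finite additivity of the conditional measure $\mu_{U_n}$ we have
$$
\mu_{U_n}(\tau_{U_n}^{\ell-1}\le K)=\sum_{i=0}^{K}\mu_{U_n}(\tau_{U_n}^{\ell-1}=i).
$$

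Next I would let $n\to\infty$. The left-hand side converges to $\hat\alpha_\ell(K)$ by hypothesis, and since the right-hand side is a \emph{finite} sum (with $K$ fixed), I may interchange $\lim_n$ with $\sum_{i=0}^{K}$ and invoke the assumed existence of $p_i^\ell$ term by term. This yields the intermediate identity
$$
\hat\alpha_\ell(K)=\sum_{i=0}^{K}p_i^\ell,
$$
valid for every $K$ large enough. Note that, as remarked before the statement, $p_i^\ell=0$ for $i\le\ell-2$, so effectively only the indices $i\ge\ell-1$ contribute.

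Finally I let $K\to\infty$. The left-hand side tends to $\hat\alpha_\ell$ by definition. The right-hand side is monotone increasing in $K$ (each $p_i^\ell\ge 0$) and is bounded by $1$ since every $\hat\alpha_\ell(K)$ is a probability; therefore its limit exists and equals the convergent series $\sum_{i\ge 0}p_i^\ell$, giving $\hat\alpha_\ell=\sum_i p_i^\ell$ as claimed.

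There is no real obstacle here: the lemma is essentially a bookkeeping identity, with the only point requiring attention being the legitimacy of swapping $\lim_n$ with the inner summation, which is permitted only because $K$ keeps the sum finite. If one tried to take $K\to\infty$ before $n\to\infty$ the argument would require a dominated-convergence style justification, but in the present order everything is immediate.
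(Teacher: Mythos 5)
Your proof is correct and follows essentially the same route as the paper's: decompose $\{\tau_{U_n}^{\ell-1}\le K\}$ into the level sets $\{\tau_{U_n}^{\ell-1}=i\}$, pass to the limit $n\to\infty$ term by term in the finite sum to get $\hat\alpha_\ell(K)=\sum_{i=0}^{K}p_i^\ell$, then let $K\to\infty$. The paper phrases this with explicit $\varepsilon$-bookkeeping and a tail estimate on $\sum_{i>K}p_i^\ell$, whereas your monotone-convergence formulation makes that tail control automatic; the substance is the same.
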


  \begin{proof}  Let $\varepsilon>0$, then there exists $K_1$ so that
  $|\hat\alpha_\ell-\hat\alpha_\ell(K)|<\varepsilon$ for all $K\ge K_1$.
 Let $K\ge K_1$, then for all small enough $U$ one has
 $|\hat\alpha_\ell(K)-\mu_U(\tau_U^{\ell-1}\le K)|<\varepsilon$.
 Thus $|\hat\alpha_\ell-\mu_U(\tau_U^{\ell-1}\le K)|<2\varepsilon$.
 There exists $K_2$ so that $\sum_{i=K+1}^\infty p_i^{\ell-1}<\varepsilon$ for all $K\ge K_2$.
 If we let $K\ge K_0=K_1\vee K_2$ then for all small enough $U$ one has
 $|p_i^\ell-\mu_U(\tau_U^{\ell-1}=i)|<\varepsilon/K$.

Consequently
$$
\hat\alpha_\ell=\sum_{i=1}^K\mu_U(\tau_U^{\ell-1}=i)+\mathcal{O}(2\varepsilon)
=\sum_{i=1}^Kp_i^{\ell-1}+\mathcal{O}(3\varepsilon)
=\sum_{i=1}^\infty p_i^{\ell-1}+\mathcal{O}(4\varepsilon).
$$
Now let $\varepsilon$ go to zero.
\end{proof}

Now put $\alpha_\ell=\lim_{K\to\infty}\alpha_\ell(K)$,
where $\alpha_\ell(K)=\lim_{n\to\infty}\mu_{U_n}(\tau_{U_n}^{\ell-1}\le K<\tau_{U_n}^\ell)$
for $\ell=1,2,\dots$.
Since $\{\tau_{U_n}^\ell\le K\}\subset\{\tau_{U_n}^{\ell-1}\le K\}$
we get
$\{\tau_{U_n}^{\ell-1}\le K<\tau_{U_n}^\ell\}=\{\tau_{U_n}^{\ell-1}\le K\}\setminus\{\tau_{U_n}^{\ell}\le K\}$.
Therefore $\alpha_\ell=\hat\alpha_\ell-\hat\alpha_{\ell+1}$ which in particular implies the
existence of the limits $\alpha_\ell$.  Also, by the previous lemma
$$
\alpha_\ell=\sum_i(p_i^{\ell-1}-p_i^\ell)
$$
for $\ell=2,3,\dots$. In the special case $\ell=1$ we get in particular
$\alpha_1=\lim_{K\to\infty}\lim_{n\to\infty}\mu_{U_n}(K<\tau_{U_n})$.
Since $p_0^1=1$ and $p^1_i=0\;\forall\; i\ge1$ we get
$\alpha_1=1-\sum_ip^2_i$.

Dropping the index $n$, let $I_0=\ind_U$ the characteristic function and put $I_i=I_0\circ T^j$, then
we can define the random variable $Z=\sum_{j=0}^KI_j$ and obtain that
 $\lim_U\mathbb{E}(\ind_{Z=\ell}|I_0)=\lim_U\mu_U(Z=\ell)=\alpha_\ell(K)$.

Now put
$$
\lambda _k(L,U)=\mathbb{P}(Z^L=k|Z^L>0)=\frac{\mathbb{P}(Z^L=k)}{\mathbb{P}(Z^L>0)}.
$$
For a sequence of sets $U_n$ for which $\mu(U_n)\to0$ as $n\to\infty$ we
put $\lambda_k(L)=\lim_{n\to\infty}\lambda_k(L,U_n)$. Evidently
$\lambda_k(L,U)\le\lambda_k(L',U)$ if $L\le L'$ and consequently also
$\lambda_k(L)\le\lambda_k(L')$. As a result the limit $\lambda_k=\lim_{L\to\infty}\lambda_k(L)$
always exists.

Let us also define $Z^{L,+}=Z^{L,+}_U=\sum_{i=L}^{2L}I_i$ and similarly
 $Z^{L,-}=Z^{L,-}_U=\sum_{i=0}^{L-1}I_i$. Evidently $Z^L=Z^{L,-}+Z^{L,+}$
 and moreover
 $$
 \alpha_k=\lim_{L\to\infty}\lim_{n\to\infty}\mathbb{P}(Z_U^{L,+}=k|I_L=1)
 $$
 which by invariance is equal to
 $ \alpha_k=\lim_{L\to\infty}\lim_{n\to\infty}\mathbb{P}(Z_U^{L}=k|I_0=1)$.
\noindent Let us notice that $\alpha_1$ is commonly called the {\em extremal index}.
Let us define $W^L=\sum_{i=0}^LI_i$.
Then $\alpha_k=\lim_{L\to\infty}\lim_{n\to\infty}\mathbb{P}(W^L=k| I_0=1)$.

\begin{lem} \label{tail.lemma}
Assume that for all $L$ large enough the limits
 $\hat\alpha_k(L)=\lim_{n\to\infty}\hat\alpha_k(L,U_n)$
 exist along a (nested) sequence of sets $U_n$, $\mu(U_n)\to0$ as $n\to\infty$.
 Assume $\sum_{k=1}^\infty k\hat\alpha_k<\infty$ where $\hat\alpha_k=\lim_{L\to\infty}\hat\alpha_k(L)$.

 Then for every $\eta>0$ there exists an $L_0$ so that for all $L'>L\ge L_0$:
 $$
\mathbb{P}(W^{L'-L}\circ T^L>0, I_0=1)\le \eta\mu(U_n)
$$
and
$$
\mathbb{P}(W^L>0, I_{L'}=1)\le \eta\mu(U_n)
$$
for all $n$ large enough (depending on $L,L'$).
\end{lem}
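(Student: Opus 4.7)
The plan is to bound both probabilities by a common quantity of the form $\mu(U_n)\cdot\sum_{i\in[a,b]}\mu_{U_n}(T^i\in U_n)$ for a window $[a,b]$ lying inside $[L,L']$, and then to use the hypothesis $\sum_k k\hat\alpha_k<\infty$ to force that expected count below $\eta$ once $L\ge L_0$. Both events describe two visits to $U_n$ whose temporal separation is at least $L$, which is what enables the common bound.

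First I would apply a union bound together with the $T$-invariance identity $\mu(T^{-i}U_n\cap T^{-L'}U_n)=\mu(U_n\cap T^{-(L'-i)}U_n)$. For the first event,
$$
\mathbb{P}\bigl(W^{L'-L}\circ T^L>0,\,I_0=1\bigr)\le\sum_{i=L}^{L'}\mu(U_n\cap T^{-i}U_n)=\mu(U_n)\sum_{i=L}^{L'}\mu_{U_n}(T^i\in U_n),
$$
and the second event admits the analogous estimate by decomposing by the visit at $T^{L'}$ and shifting. Regrouping returns by their index gives the identity
$$
\sum_{i\in[a,b]}\mu_{U_n}(T^i\in U_n)=\sum_{\ell\ge1}\bigl[\mu_{U_n}(\tau_{U_n}^\ell\le b)-\mu_{U_n}(\tau_{U_n}^\ell\le a-1)\bigr].
$$

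Next I would pass to the limit $n\to\infty$. By hypothesis each summand converges to $\hat\alpha_{\ell+1}(b)-\hat\alpha_{\ell+1}(a-1)$ and, after a truncation in $\ell$ (see below), so does the whole sum. The limiting bound is therefore dominated by $\sum_{\ell\ge2}[\hat\alpha_\ell-\hat\alpha_\ell(a-1)]$, and since $\sum_k k\hat\alpha_k<\infty$ gives $\sum_\ell\hat\alpha_\ell<\infty$ and $\hat\alpha_\ell(a-1)\nearrow\hat\alpha_\ell$ as $a\to\infty$, dominated convergence forces this tail to vanish. Choosing $L_0$ so that $\sum_{\ell\ge2}[\hat\alpha_\ell-\hat\alpha_\ell(L_0-1)]<\eta/2$, and then taking $n$ large enough (depending on $L,L'$) to absorb the finite-$n$ discrepancy, produces the claimed bound by $\eta\mu(U_n)$.

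The main technical step is the interchange of $\lim_n$ with the infinite sum over $\ell$. I would fix $\ell_0$ so that $\sum_{\ell>\ell_0}\hat\alpha_\ell<\eta/8$, apply the pointwise limit on the finite head $\ell\le\ell_0$, and control the tail uniformly via the deterministic identity $\sum_{\ell\ge1}\mu_{U_n}(\tau_{U_n}^\ell\le b)=\sum_{i=1}^b\mu_{U_n}(T^i\in U_n)$, writing $\sum_{\ell>\ell_0}\mu_{U_n}(\tau_{U_n}^\ell\le b)=\sum_{i=1}^b\mu_{U_n}(T^i\in U_n)-\sum_{\ell\le\ell_0}\mu_{U_n}(\tau_{U_n}^\ell\le b)$ and noting that both terms on the right converge as $n\to\infty$, with the difference tending to $\sum_{\ell>\ell_0}\hat\alpha_{\ell+1}(b)<\eta/8$. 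This yields the required domination uniformly for large $n$ and closes the argument.
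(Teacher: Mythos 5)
Your proposal is correct, and it takes a genuinely different and somewhat more economical route than the paper. The paper proves the first inequality via an exact set identity: the event $\{I_0=1,\,W^{L'-L}\circ T^L>0\}$ is covered by the differences $U\cap\{W^{L'}\ge k\}\setminus\{W^{L}\ge k\}$, whose measures are $\mu(U)\bigl(\hat\alpha_k(L',U)-\hat\alpha_k(L,U)\bigr)$, and then sums over $k$; for the second inequality it runs a combinatorial cluster decomposition (the sets $C_{\vec{i}}$, $J^k_p(j)$) to reduce to $\mathbb{E}(W^L\circ T^{L'-L}I_0)$, splits that expectation at a level $k_0$, feeds the small-$k$ part back into the first inequality, and controls the tail with the first-moment hypothesis $\sum_k k\hat\alpha_k<\infty$. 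You instead treat both bounds by one mechanism: an expected-count (union) bound $\mu(U_n)\sum_{i=a}^{b}\mu_{U_n}(T^i\in U_n)$ over a distant window ($a=L$, respectively $a=L'-L$), the exact conversion $\sum_{i=a}^{b}\mu_{U_n}(T^i\in U_n)=\sum_{\ell\ge1}\bigl[\mu_{U_n}(\tau^\ell_{U_n}\le b)-\mu_{U_n}(\tau^\ell_{U_n}\le a-1)\bigr]$ (a finite sum since $\tau^\ell\ge\ell$, which also legitimizes your truncation argument for exchanging $\lim_n$ with the sum), and monotone/dominated convergence to make $\sum_{\ell\ge2}[\hat\alpha_\ell-\hat\alpha_\ell(a-1)]$ small once $a\ge L_0$. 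What this buys: a single, symmetric argument with no cluster bookkeeping, and it only uses $\sum_\ell\hat\alpha_\ell<\infty$ rather than the first moment that the paper's Part~(II) genuinely invokes; what it gives up is essentially nothing beyond replacing the paper's exact identity in Part~(I) by a union bound. One caveat, which you share with the paper rather than introduce: for the second inequality your window starts at $L'-L$, so your choice of $L_0$ controls it only when $L'-L\ge L_0$, not for all $L'>L\ge L_0$ as literally stated; the paper's own Part~(II) has the same implicit requirement (it applies the Part~(I) estimate to $\mathbb{P}(W^L\circ T^{L'-L}>0,I_0=1)$), and indeed the statement cannot hold for, say, $L'=L+1$ in the presence of clustering. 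Since in the application (Theorem~\ref{theorem.lambda}) the relevant gaps are of order $L$, this is a defect of the lemma's formulation, not of your argument, though it would be worth stating the gap condition explicitly.
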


\begin{proof}
(I) To prove the first estimate, let $\varepsilon>0$ and $k\ge1$.
Let $k_0$ be so that $\sum_{k=k_0}^\infty\hat\alpha_k<\varepsilon$
and then $L_0$ large enough so that $\hat\alpha_k-\hat\alpha_k(L)<\varepsilon/k_0$ for
all $L\ge L_0$. Then for all sufficiently large $n$  one has
$\left|\hat\alpha_k(L)-\hat\alpha_k(L,U_n)\right|<\varepsilon/k_0$ for all $k\le k_0$.
Also, for $n$ large enough we can achieve that
$\sum_{k=k_0}^\infty\hat\alpha_k(L,U_n)=\sum_{k=k_0}^L\hat\alpha_k(L,U_n)\le2 \varepsilon$.
From now on $U=U_n$.

Note that
$\hat\alpha_k(L,U)=\mathbb{P}(W^L\ge k|I_0=1)$
and
$$
U\cap\{W^L\ge k\}\subset U\cap \{W^{L'}\ge k\},
$$
where $U=\{I_0=1\}$. Consequently
$$
U\cap\{W^{L'}\ge k\}\setminus\{W^L\ge k\}
=U\cap T^{-L}\{W^{L'-L}>0\}\cap\{W^{L'}\ge k\}
$$
and therefore
$$
\mathbb{P}(I_0=1,W^{L'-L}\circ T^L>0,W^{L'}\ge k)
=\mu(U)(\hat\alpha_k(L',U)-\hat\alpha_k(L,U)).
$$
Hence
$$
\mathbb{P}(W^{L'-L}\circ T^L>0,I_0=1)
\le\mu(U)\sum_{k=1}^\infty(\hat\alpha_k(L',U)-\hat\alpha_k(L,U))
<5\varepsilon\mu(U)
$$
since $\sum_{k=k_0}^\infty\hat\alpha_k(L,U)\le3 \varepsilon$.
The first inequality of the lemma now follows if $\varepsilon=\eta/5$.
\vspace{3mm}

(II) To prove the second bound let $\varepsilon>0$ and $k\ge1$.
Let $k_0$ be so that $\sum_{k=k_0}^\infty k\hat\alpha_k<\varepsilon$
and then $L_0$ large enough so that $\hat\alpha_k-\hat\alpha_k(L)<\varepsilon/k_0$ for
all $L\ge L_0$. Then for all sufficiently large $n$  one has
$\left|\hat\alpha_k(L)-\hat\alpha_k(L,U_n)\right|<\varepsilon/k_0$ for all $k\le k_0$.
Moreover for $n$ large enough we also obtain
$\sum_{k=k_0}^\infty\hat\alpha_k(L',U_n)=\sum_{k=k_0}^L\hat\alpha_k(L',U_n)<2\varepsilon$.
Let $U=U_n$ and notice that
$$
\mathbb{P}=(W^L>0,I_{L'}=1)
=\sum_{k=1}^\infty\mathbb{E}(\ind_{W^L=k}I_{L'})
=\sum_k\frac1k\mathbb{E}(\ind_{W^L=k}W^LI_{L'})
=\sum_k\frac1k\sum_{i=0}^L\mathbb{E}(\ind_{W^L=k}I_iI_{L'})
$$
and
$$
\bigcup_{i=0}^L\{W^L=k,I_i=1,I_{L'}=1\}
=\bigcup_{i=0}^L\bigcup_{\vec{i}\in J^k}\left(C_{\vec{i}}\cap\{I_i=I_{L'}=1\}\right),
$$
where
$$
J^k=\left\{\vec{i}=(i_1,i_2,\dots,i_k): 0\le i_1<i_2<\cdots<i_k\le l\right\}
$$
and
$$
C_{\vec{i}}=\left\{I_{i_j}=1\forall i=j,\dots,k, \;\;I_a=0 \forall a\in[0,L]\setminus\{i_j:j\}\right\}.
$$
Then
\begin{eqnarray*}
\bigcup_{i=0}^L\{W^L=k\}\cap\{I_i=I_{L'}=1\}
&=&\bigcup_{j=1}^k\bigcup_{\vec{i}\in J^k}\left(C_{\vec{i}}\cap\{I_i=I_{L'}=1\}\right)\\
&=&\bigcup_{j=1}^k\bigcup_{p=0}^LT^{-p}\!\left(\bigcup_{\vec{i}\in J^k_p(j)}\left(C_{\vec{i}}\cap\{I_0=I_{L'-p}=1\}\right)\right),
\end{eqnarray*}
where
$$
J^k_p(j)=\left\{\vec{i}=(i_1,\dots,i_k): -p\le i_1<\cdots<i_k\le L-p, \;\;i_j=p,\;\;I_a=0 \forall a\in[-p,L-p]\setminus\{i_j:j\}\right\}
$$
(put $J^k_p(j)=\varnothing$ if either $p<j$ or $p>L-j$).
Consequently
$$
\{W^L>0, I_{L'}=1\}
=\bigcup_{p=0}^LT^{-p}\!\left(\bigcup_{k=1}^\infty\bigcup_{j=1}^k\bigcup_{\vec{i}\in J^k_p(j)}
\left(C_{\vec{i}}\cap\{I_0=I_{L'-p}=1\}\right)\right)
$$
where the triple union inside the brackets is a disjoint union.
Thus
\begin{eqnarray*}
\mathbb{P}(W^L>0, I_{L'}=1)
&\le&\sum_{p=0}^L\mathbb{E}(I_0I_{L'-p})\\
&=&\mathbb{E}(W^L\circ T^{L'-L}I_0)\\
&\le&k_0\mathbb{P}(W^L\circ T^{L'-L}>0,I_0=1) +\sum_{k=k_0}^\infty k\mathbb{P}(W^L\circ T^{L'-L}=k,I_0=1)\\
&\le&k_05\varepsilon\mu(U)+\sum_{k=k_0}^\infty k\hat\alpha_k(L',U)\\
&\le&7\varepsilon \mu(U)
\end{eqnarray*}
where we used the estimate from Part~(I). Now put $\varepsilon=\eta/7$.
\end{proof}

\begin{thrm}\label{theorem.lambda}
Let $U_n\subset\Omega$ be a nested sequence so that $\mu(U_n)\to0$ as $n\to\infty$.
Assume that the limits $\hat\alpha_\ell(L)=\lim_{n\to\infty}\hat\alpha_\ell(L,U_n)$ exist for $\ell=1,2,\dots$
and $L$ large enough.
Assume $\sum_\ell\ell\hat\alpha_\ell<\infty$, then
$$
\lambda_k=\frac{\alpha_k-\alpha_{k+1}}{\alpha_1}
$$
where $\alpha_k=\hat\alpha_k-\hat\alpha_{k+1}$.
In particular the limit defining $\lambda_k$ exists.
\end{thrm}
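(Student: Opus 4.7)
The plan is to write $\mathbb{P}(Z^L=k)$ and $\mathbb{P}(Z^L>0)$ as finite sums of conditional probabilities $\mathbb{P}(W^m=j\mid I_0=1)$ over a shift parameter $m\in\{0,\dots,2L\}$, pass to the limit $n\to\infty$ term by term on the bulk of the sum, and then take $L\to\infty$ via Ces\`aro averaging.

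The starting point is the combinatorial identity
$$
\mathbf{1}_{\{Z^L\ge k\}}\;=\;\sum_{j=0}^{2L}\mathbf{1}_{\{I_j=1,\ \sum_{i=j}^{2L}I_i=k\}},
$$
valid because in any configuration with hits $\sigma_1<\sigma_2<\cdots<\sigma_m$ in $[0,2L]$ the forward count $\sum_{i=\sigma_l}^{2L}I_i$ equals $k$ if and only if $l=m-k+1$, which happens uniquely precisely when $m\ge k$. Since $\sum_{i=j}^{2L}I_i=W^{2L-j}\circ T^j$ and $\mathbf{1}_{\{I_j=1\}}=\mathbf{1}_U\circ T^j$, the $T$-invariance of $\mu$ yields, after re-indexing by $m=2L-j$,
$$
\mathbb{P}(Z^L\ge k)\;=\;\mu(U)\sum_{m=0}^{2L}\mathbb{P}(W^m=k\mid I_0=1).
$$
Taking the difference at consecutive values of $k$ gives the analogous formula for $\mathbb{P}(Z^L=k)$, and the common factor $\mu(U)$ cancels in the ratio to produce
$$
\frac{\mathbb{P}(Z^L=k)}{\mathbb{P}(Z^L>0)}
\;=\;\frac{\sum_{m=0}^{2L}\bigl[\mathbb{P}(W^m=k\mid I_0=1)-\mathbb{P}(W^m=k+1\mid I_0=1)\bigr]}
           {\sum_{m=0}^{2L}\mathbb{P}(W^m=1\mid I_0=1)}.
$$

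For fixed $L$ I send $n\to\infty$: the hypothesis supplies $\mathbb{P}(W^m=j\mid I_0=1)\to\alpha_j(m):=\hat\alpha_j(m)-\hat\alpha_{j+1}(m)$ for every $m\ge L_0$, while the $L_0$ initial terms are uniformly bounded by $1$ and thus contribute at most an $O(L_0)$ perturbation that is independent of $L$. Monotonicity $\hat\alpha_j(m)\uparrow\hat\alpha_j$ gives $\alpha_j(m)\to\alpha_j$ as $m\to\infty$, so Ces\`aro summation of bounded sequences yields
$$
\frac{1}{2L+1}\sum_{m=0}^{2L}\alpha_j(m)\;\longrightarrow\;\alpha_j
\qquad(L\to\infty).
$$
Consequently both numerator and denominator of the above ratio grow like $\Theta(L)$ with leading constants $\alpha_k-\alpha_{k+1}$ and $\alpha_1$ respectively, and the ratio converges to $(\alpha_k-\alpha_{k+1})/\alpha_1$. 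In particular the limit defining $\lambda_k$ exists and has the claimed value.

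The main obstacles are twofold. First, the denominator can be zero in the limit, so one must verify $\alpha_1>0$; this follows from the hypothesis $\sum_\ell\ell\hat\alpha_\ell<\infty$, since $\alpha_1=1-\hat\alpha_2=0$ would force $\hat\alpha_\ell=1$ for all $\ell\ge 1$ by monotonicity and make the series diverge. Second, the passage $n\to\infty$ is delicate because the term-by-term limits are only supplied for $m\ge L_0$; it is here that Lemma~\ref{tail.lemma} (whose hypothesis is precisely this summability) is invoked to ensure that the bulk of the sum behaves as expected uniformly in $n$, while the contribution of the remaining bounded $L_0$ terms is killed by the $\Theta(L)$ growth of the denominator. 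The rest is routine telescoping and Ces\`aro estimation.
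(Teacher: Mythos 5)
Your argument is correct in its main thrust and takes a genuinely different route from the paper. The paper expands $\mathbb{P}(Z^{L'}=k)=\frac1k\sum_i\mathbb{E}(\ind_{Z^{L'}=k}I_i)$, introduces the cluster sets $R^{i,L}_{k,\ell}$ recording how many hits lie to the left of a marked hit, needs Lemma~\ref{tail.lemma} (hence the summability hypothesis) both to prove approximate shift-invariance of $\mu(R^{i,L}_{k,\ell})$ in $\ell$ and to control the tail sums over $\ell$, and finally works with the enlarged window $L'=L^\gamma$ to make boundary effects negligible. Your ``count from the right-most hit'' identity replaces all of this bookkeeping: it converts $\mathbb{P}(Z^L\ge k)$ \emph{exactly} into $\mu(U)\sum_{m=0}^{2L}\mathbb{P}(W^m=k\mid I_0=1)$, i.e.\ into partial sums of the quantities $\alpha_k(m,U_n)$ whose limits in $n$ and then $m$ are precisely the data $\alpha_k$ of the statement, and from there only Ces\`aro convergence is needed. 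The identity is correct (each configuration with at least $k$ hits contributes exactly once, through its $k$-th hit from the right), no mixing or decoupling enters, and the hypothesis is used only through the existence of the limits $\alpha_j(m)$ for large $m$ and through $\alpha_1>0$. Two small remarks: the appeal to Lemma~\ref{tail.lemma} in your closing paragraph is superfluous --- for fixed $L$ the sum over $m$ is finite, so no uniformity in $n$ is needed, and the $m<L_0$ terms are disposed of exactly as you say, by boundedness against the $\Theta(L)$ denominator; and, like the paper, you obtain the double limit only in the sandwich sense $\lim_L\limsup_n=\lim_L\liminf_n$, which is no worse than the original argument.

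The one step that does not work as written is your justification that $\alpha_1>0$. Monotonicity of $\ell\mapsto\hat\alpha_\ell$ goes the wrong way: $\hat\alpha_2=1$ only gives $\hat\alpha_\ell\le1$, and the sequence $1,1,0,0,\dots$ shows that $\hat\alpha_2=1$ is not formally incompatible with $\sum_\ell\ell\hat\alpha_\ell<\infty$ at the level of a monotone sequence. The fact you need is true, but it requires a dynamical input, namely that the first-return map $T_{U_n}$ preserves $\mu_{U_n}$: if $\hat\alpha_2=1$, pick $K$ with $\hat\alpha_2(K)>1-\varepsilon/(2\ell)$, so that $\mu_{U_n}(\tau_{U_n}\le K)>1-\varepsilon/\ell$ for all large $n$; then $\mu_{U_n}\bigl(\bigcap_{i=0}^{\ell-2}T_{U_n}^{-i}\{\tau_{U_n}\le K\}\bigr)>1-\varepsilon$, and on this set $\tau_{U_n}^{\ell-1}\le(\ell-1)K$, whence $\hat\alpha_\ell\ge\hat\alpha_\ell((\ell-1)K)\ge1-\varepsilon$ and so $\hat\alpha_\ell=1$ for every $\ell$, contradicting $\hat\alpha_\ell\to0$, which the summability hypothesis does force. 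With that repair (a point the published proof glosses over entirely), your proof is complete, and it is shorter and more elementary than the paper's.
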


\begin{proof}
Let $\varepsilon>0$ then there exists $k_0$ so that
$\sum_{\ell=k_0}^\infty\ell\hat\alpha_\ell<\varepsilon$. Moreover there exists $L_0$
so that $|\hat\alpha_\ell-\hat\alpha_\ell(L)|<\varepsilon/k_0$ for all $L\ge L_0$ and $\ell\in[1,k_0]$.
For $n$ large enough we also have $|\hat\alpha_\ell(L)-\hat\alpha_\ell(L,U_n)|<\varepsilon/k_0$.
In the following we will often write $U$ for $U_n$.

Let $L'>L$, then
$$
\mathbb{P}(Z^{L'}=k)
=\frac1k\mathbb{E}(\ind_{Z^{L'}=k}Z^{L'})
=\frac1k\sum_{i=0}^{2L'}\mathbb{E}(\ind_{Z^{L'}=k}\ind_{I_i=1}).
$$
For $i\in[L,2L'-L]$ put
$$
D_{i}^{L,L'}=\left\{\sum_{b=i+L+1}^{L'}I_b\ge1,\;I_i=1\right\}.
$$
By Lemma~\ref{tail.lemma} $\mu(D_i^{L,L'})=\mathcal{O}(\eta\mu(U))$ for $L$ big enough
and $n$ large enough,
where $\eta>0$ will be chosen below.
Let $k\ge1$, then
$$
\left\{W^{i+L}=k,\;I_i=1\right\}\cap\!\left(D_i^{L,L'}\right)^c
\subset \left\{Z^{L'}=k,\;I_i=1\right\}
$$
and also
$$
\{Z^{L'}=k,\;I_i=1\}\subset\{W^{i+L}=k,\;I_i=1\}\cup D_i^{L,L'}.
$$
These two inclusions imply
$$
\mathbb{P}(Z^{L'}=k,\;I_i=1)
=\mathbb{P}\!\left(W^{i+L}=k,\;I_i=1\right)+\mathcal{O}(\eta\mu(U)).
$$
Put
$$
R_{k,\ell}^{i,L}=\left\{\sum_{b=i}^{i+L}I_b=k-\ell,\; W^{i-1}=\ell,\;I_i=1\right\}
$$
for the set of $k$-clusters that have $\ell$ occurrences to the `left' of $i$.
Then
$$
R_{k,\ell}^{i,L}(j)
=R_{k,\ell}^{i,L}\cap\left\{I_{i-j}=1,\;I_a=0\forall a=0,\dots,i-j-1\right\}
$$
denotes all those $k$-clusters which have $\ell$ occurrences to the left of $i$ the
first one of which occurs $j$ steps to the left of $i$.
Evidently, $R_{k,\ell}^{i,L}=\bigcup_{j=1}^iR_{k,\ell}^{i,L}(j)$ is a disjoint union.
Let us note that the set
$$
F^{i-\frac{L}2}=\left\{W^{i-\frac{L}2}>0,\;I_i=1\right\}
$$
 has by Lemma~\ref{tail.lemma}
measure $\mathcal{O}(\eta\mu(U))$.
Then for every $\ell$ we obtain the inclusion
$$
R_{k,0}^{i,L}\cap \!\left(F^{i-\frac{L}2}\right)^c
\subset \bigcup_{j=i-\frac{L}2}^{i-1}T^{-j}R_{k,\ell}^{i,L}(j)
\subset R_{k,0}^{i,L}\cup D_i^{L,L'}\cup F^{i-\frac{L}2}
$$
where the union over $j$ is a disjoint union since
$T^{-j}R_{k,\ell}^{i,L}(j)\cap T^{-j'}R_{k,\ell}^{i,L}(j')=\varnothing$ if $j\not=j'$.
Thus for every $\ell=0,\dots,k-1$:
$$
\mu\!\left( \bigcup_{j=i-\frac{L}2}^{i-1}T^{-j}R_{k,\ell}^{i,L}(j)\right)
=\mu\!\left(R_{k,0}^{i,L}\right)+\mathcal{O}(\eta\mu(U))
$$
and since the union is disjoint this implies
$$
 \sum_{j=i-\frac{L}2}^{i-1}\mu(R_{k,\ell}^{i,L}(j))
 \le\mu(R_{k,\ell}^{i,L})
 \le \sum_{j=i-\frac{L}2}^{i-1}\mu(R_{k,\ell}^{i,L}(j))+\mu(F^{i-\frac{L}2})
$$
from which we conclude that
$$
\mu(R_{k,\ell}^{i,L})
 =\mu(R_{k,0}^{i,L})+\mathcal{O}(\eta\mu(U))
 =\mu(R_{k,0}^{L,L})+\mathcal{O}(\eta\mu(U))
$$
where the last step is due to invariance.
Therefore
\begin{eqnarray*}
\mathbb{P}(Z^{L'}=k)
&=&\frac1k\!\left(\sum_{i=L}^{2L'-L}\sum_{\ell=0}^{k-1}\!
\left(\mu(R_{k,\ell}^{i,L})+\mathcal{O}(\eta\mu(U))\right)+\mathcal{O}(2L\mu(U))\right)\\
&=&2L'\!\left(1-\frac{L}{L'}\right)\!\left(\mu(R_{k,0}^{L,L})+\mathcal{O}(\eta\mu(U))\right)+\mathcal{O}(L\mu(U)),
\end{eqnarray*}

In a similar way let us put
$$
S_{k,\ell}^{i,L}(j)
=R_{k,\ell}^{i,L}\cap\left\{I_{i-j}=1,\;I_a=0\forall a\in(i-j,i)\right\}
$$
for the set $k$-clusters which have $\ell$ occurrences to the left of $i$ the
last one of which occurs $j$ steps to the left of $i$.
As before we obtain
$$
R_{k,\ell-1}^{i,L}\cap \!\left(R^{i-\frac{L}2}\right)^c
\subset \bigcup_{j=i-\frac{L}2}^{i-1}T^{-j}S_{k,\ell}^{i,L}(j)
\subset R_{k,\ell-1}^{i,L}\cup D_i^{L,L'}\cup F^{i-\frac{L}2}
$$
and therefore conclude that
\begin{equation}\label{invariance}
\mu(R_{k,\ell}^{i.L})=\mu(R_{k,\ell-1}^{i.L})+\mathcal{O}(\eta\mu(U)).
\end{equation}

 Since
 $$
 \mathbb{P}(Z^{L,+}=k,I_L=1)
 =(1+\mathcal{O}(\varepsilon))\mu(U)\alpha_k
 $$
 we obtain
 \begin{eqnarray*}
 \alpha_k(L,U)-\alpha_{k+1}(L,U)
 &=&(1+\mathcal{O}(\varepsilon))\mu(U)^{-1}\!
 \left(\mathbb{P}(Z^{L,+}=k , I_L=1)-\mathbb{P}(Z^{L,+}=k+1 , I_L=1)\right)\\
  &=&(1+\mathcal{O}(\varepsilon))\mu(U)^{-1}
  \sum_{\ell=0}^\infty
  \!\left(\mu(R_{k+\ell,\ell}^{L,L})-\mu(R_{k+1+\ell,\ell}^{L,L})\right)\\
  &=&(1+\mathcal{O}(\varepsilon))\mu(U)^{-1}
  \sum_{\ell=0}^{k_0}
  \!\left(\mu(R_{k+\ell,\ell}^{L,L})-\mu(R_{k+1+\ell,\ell+1}^{L,L})+\mathcal{O}(\eta\mu(U))\right)\\
  &&+\mathcal{O}(\mu(U)^{-1})\sum_{\ell=k_0+1}^\infty \!\left(\mu(R_{k+\ell,\ell}^{L,L})+\mu(R_{k+1+\ell,\ell}^{L,L})\right)
  \end{eqnarray*}
 In order to estimate the tail sum $\sum_{\ell=k_0}^\infty\mu(R_{k+\ell,\ell}^{L,L})$
 we first notice that
 $$
 T^{-j}R_{k+\ell,\ell}^{L,L}(j)\cap  T^{-j'}R_{k+\ell',\ell'}^{L,L}(j')=\varnothing
 $$
 if $j=j', \ell\not=\ell'$ and also in the case when $j\not=j'$ and $|\ell'-\ell|>k$.
 To see the latter, assume $j'>j$ and
 $ T^{-j}R_{k+\ell,\ell}^{L,L}(j)\cap  T^{-j'}R_{k+\ell',\ell'}^{L,L}(j')\not=\varnothing$
 then the occurrences in $[i,i+j)$ are identical in both sets. Moreover,
 since the occurrences in $[i+j,i+j')$ are identical this forces not only
 $\ell'\ge\ell$ but also that $\ell'-\ell\le k$ since $T^{-j}R_{k+\ell,\ell}^{L,L}(j)$
 has exactly $k$ occurrences on $[i+j,i+k)$. (There are $k-(\ell'-\ell)$ occurrences
 on $[i+j',i+j+k]$ and for $T^{-j'}R_{k+\ell',\ell'}^{L,L}(j')$ there are $\ell'-\ell$
 occurrences on $(i+j+k,i+j'+k]$.)
If we choose an integer $k'>k$ then for every $p=0,1,\dots, k'-1$ one has
$$
\bigcup_{j=1}^iT^{-j}\bigcup_{s=\frac{k_0}{k'}}^\infty R_{k+sk'+p,sk'+p}^{L,L}(j)
\subset T^{-L}\left\{W^{2L}\ge k_0+k, I_0=1\right\}
$$
where the double union on the left hand side is disjoint. Therefore
 $$
\sum_{s=\frac{k_0}{k'}}^\infty \mu(R_{k+sk'+p,sk'+p}^{L,L})
\le \mathbb{P}(W^{2L}\ge k_0+k, I_0=1)
=\mu(U)\hat\alpha_{k_0+k}(2L,U).
$$
and consequently
 $$
\sum_{\ell=k_0}^\infty \mu(R_{k+\ell,\ell}^{L,L})
\le k'\mu(U)\hat\alpha_{k_0+k}(2L,U).
$$
The same estimate also applies to the tail sum of $\mu(R_{k+1+\ell,\ell}^{L,L})$.

This gives us
$$
\mu(R_{k,0}^{L,L})
=(1+\mathcal{O}(\varepsilon))\mu(U)(\alpha_k(L)-\alpha_{k+1}(L))
 +\mathcal{O}(k_0\eta\mu(U))+k'\mu(U)\hat\alpha_{k_0+k}(2L,U).
  $$
If we choose $\eta=\varepsilon/k_0$, $k'=k_0+k$ and $L'=L^\gamma$ for some $\gamma>1$, then
\begin{eqnarray*}
\mathbb{P}(Z^{L^\gamma}=k)
&=&2L^\gamma\mu(U)\!
\left(\!\left(1-L^{1-\gamma}\right)(1+\mathcal{O}(\varepsilon))(\alpha_k(L,U)-\alpha_{k+1}(L,U))\right.\\
&&\hspace{4cm}\left.+\mathcal{O}(\varepsilon)+\mathcal{O}(L^{1-\gamma})+(k_0+k)\hat\alpha_{k_0+k}(2L,U)\right),
\end{eqnarray*}

Without loss of generality we can assume that $L$ is large enough
 so that $L^{1-\gamma}<\varepsilon$. Then
 \begin{eqnarray*}
\mathbb{P}(Z^{L^\gamma}>0)
&=&\sum_{k=1}^\infty\mathbb{P}(Z^{L^\gamma}=k)\\
&=&2L^\gamma(	1+\mathcal{O}(\varepsilon))\mu(U)
\!\left(\sum_{k=1}^{k_0}(\alpha_k(L,U)-\alpha_{k+1}(L,U)
+\mathcal{O}(\varepsilon))+\sum_{\ell=k_0}^\infty\ell\hat\alpha_\ell(2L,U)\right)\\
&=&2L^\gamma(	1+\mathcal{O}(\varepsilon))\mu(U)
\!\left(\alpha_1(L,U)+\mathcal{O}(\varepsilon)\right)
   \end{eqnarray*}
where the tail sum on the RHS is estimated by $2\varepsilon$. Hence
 $$
 \mathbb{P}(Z^{L^\gamma}>0)
=2L^\gamma(	1+\mathcal{O}(\varepsilon))\mu(U)
(\alpha_{1}(L,U)+\mathcal{O}(\varepsilon)).
 $$

Combining the two estimates yields
$$
\lambda_k(L^\gamma,U_n)
=\frac{\mathbb{P}(Z^{L^\gamma}=k)}{\mathbb{P}(Z^{L^\gamma}>0)}
=(	1+\mathcal{O}(\varepsilon))
\frac{\alpha_k(L,U_n)-\alpha_{k+1}(L,U_n)+\mathcal{O}(\varepsilon)}{\alpha_1(L,U_n)+\mathcal{O}(\varepsilon)}.
$$
Letting  $\varepsilon\to0$ implies $L\to\infty$ and consequently $\mu(U_n)\to0$ as $n\to\infty$
let us finally obtain (as $\gamma>1$) as claimed
$\lambda_k=(\alpha_k-\alpha_{k+1})/\alpha_1$.
\end{proof}

\begin{remark} Under the assumption of Theorem~\ref{theorem.lambda} the
 expected length of the clusters is
$$
\sum_{k=1}^\infty k\lambda_k=\frac{1}{{\alpha_1}}\sum_{k=1}^{\infty}k(\alpha_k-\alpha_{k+1})=\frac1{\alpha_1}
$$
which is the reciprocal of the extremal index $\alpha_1$.
\end{remark}

\begin{remark}
Since $\lambda_k\ge0$ we conclude that $\alpha_1\ge\alpha_2\ge\alpha_3\ge\cdots$ is
a decreasing sequence. It is moreover easy to see that
$\lambda_k=\alpha_k\forall k$ only when both are geometrically distributed,
 i.e.\ when $\lambda_k=\alpha_k=\alpha_1(1-\alpha_1)^{k-1}$.
 Also notice that the condition $\sum_kk\hat\alpha_k<\infty$ of the theorem is equivalent to
  $\sum_kk^3\lambda_k<\infty$ or $\sum_kk^2\alpha_k<\infty$.
\end{remark}

\begin{cor} For every $\eta>$ one has
$$
\left|\mathbb{P}(Z_i^{L,-}=k, Z_i^{L,+}=\ell-k,I_i=1)
-\mathbb{P}(Z_i^{L,-}=k', Z_i^{L,+}=\ell-k',I_i=1)\right|
\le\eta\mu(U_n)
$$
for all $0\le k,k'<\ell$, provided $L$ and $n$ are large enough.
\end{cor}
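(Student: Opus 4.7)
The strategy is to recognize the corollary as the natural reformulation of equation (\ref{invariance}) already established inside the proof of Theorem~\ref{theorem.lambda}. I would first identify the event $\{Z_i^{L,-}=k,\;Z_i^{L,+}=\ell-k,\;I_i=1\}$ with the set $R_{\ell,k}^{i,L}$ from that proof, under the correspondence $(\text{total count},\text{left count})\leftrightarrow(\ell,k)$: the total number of visits in the window $[0,i+L]$ is $\ell$, with $k$ of them strictly to the left of $i$ and the remaining $\ell-k$ occurring at $i$ or in $[i+1,i+L]$. Under this identification the claim reduces to showing that $\mu(R_{\ell,k}^{i,L})$ is essentially independent of the split index $k\in\{0,\dots,\ell-1\}$.

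Once this is done, equation (\ref{invariance}) does the work: it gives
$$
\mu(R_{\ell,k}^{i,L})=\mu(R_{\ell,k-1}^{i,L})+\mathcal{O}(\eta'\mu(U)),
$$
with $\eta'$ free to be made arbitrarily small by taking $L$ and $n$ large enough. Iterating at most $|k-k'|<\ell$ times telescopes this into
$$
\bigl|\mu(R_{\ell,k}^{i,L})-\mu(R_{\ell,k'}^{i,L})\bigr|=\mathcal{O}(\ell\,\eta'\mu(U)).
$$
To convert this to the uniform bound $\eta\mu(U_n)$, I would split on $\ell$. Using the standing assumption $\sum_\ell\ell\hat\alpha_\ell<\infty$ of Theorem~\ref{theorem.lambda}, pick a cutoff $k_0$ large enough that $\hat\alpha_{k_0}<\eta/2$. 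For $\ell\le k_0$ set $\eta'=\eta/k_0$ and apply the iteration. For $\ell>k_0$ bound each probability separately via the inclusion $R_{\ell,k}^{i,L}\subset\{W^{i+L}\ge\ell,\;I_i=1\}$ combined with $T$-invariance, yielding $\mu(R_{\ell,k}^{i,L})\le\mu(U)\hat\alpha_\ell(2L,U)$, which is already below $\eta\mu(U_n)/2$ for $L,n$ large.

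The main obstacle, the estimate (\ref{invariance}) itself, has already been dispatched inside the proof of Theorem~\ref{theorem.lambda} by comparing the decomposition of $R_{k,\ell}^{i,L}$ via the position $j$ of the first left occurrence (the sets $R_{k,\ell}^{i,L}(j)$) with the decomposition of $R_{k,\ell-1}^{i,L}$ via the position of the last left occurrence (the sets $S_{k,\ell}^{i,L}(j)$), together with $T$-invariance and the negligibility of the exceptional sets $D_i^{L,L'}$ and $F^{i-L/2}$ supplied by Lemma~\ref{tail.lemma}. Consequently the corollary requires no new dynamical estimate: it is a bookkeeping repackaging of (\ref{invariance}) into a form uniform in the split index $k$, with the only care needed being the separation of small $\ell$ (handled by the telescoping) from large $\ell$ (handled by the tail bound).
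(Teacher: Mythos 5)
Your core route is exactly the paper's: the paper proves the corollary in one line by identifying $\{Z_i^{L,-}=k,\,Z_i^{L,+}=\ell-k,\,I_i=1\}$ with the set $R^{i,L}$ (total count $\ell$, left count $k$; the paper's subscript order in the corollary's proof is a slip, your reading is the intended one) and invoking~\eqref{invariance}; your telescoping over at most $|k-k'|<\ell$ steps, with the $\mathcal{O}(\eta'\mu(U))$ error made small by taking $L,n$ large, is just the bookkeeping the paper leaves implicit, and for fixed $\ell$ this already gives the statement.

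The part you add on top — uniformity in $\ell$ via the cutoff $k_0$ — contains a genuine gap. The claimed bound $\mu(R_{\ell,k}^{i,L})\le\mu(U)\hat\alpha_\ell(2L,U)$ does not follow from the inclusion $R_{\ell,k}^{i,L}\subset\{W^{i+L}\ge\ell,\;I_i=1\}$ together with $T$-invariance: the marked occurrence sits at $i$, while the other $k$ occurrences may lie anywhere in $[0,i-1]$ (with the paper's definition of $R$ via $W^{i-1}$), so this event is not a preimage of $\{I_0=1,\,W^{2L}\ge\ell\}$ under any power of $T$, and indeed for $i$ of order $1/\mu(U)$ its measure can be of order $\mu(U)$ while $\mu(U)\hat\alpha_\ell(2L,U)$ is of order $L\mu(U)^2$. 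Even under the window reading (all occurrences in $[i-L,i+L]$), invariance only identifies the event with $\{I_L=1,\;W^{2L}\ge\ell\}$, where the marked point is in the middle of the window; comparing that with $\{I_0=1,\;W^{2L}\ge\ell\}$ — i.e.\ controlling occurrences to the \emph{left} of a marked return — is precisely the nontrivial content of part (II) of Lemma~\ref{tail.lemma} and of the first-occurrence/disjointness argument used for the tail sums $\sum_{\ell\ge k_0}\mu(R^{L,L}_{k+\ell,\ell})$ in the proof of Theorem~\ref{theorem.lambda}; moreover those arguments bound sums over $\ell$, not individual terms, and a naive single-term bound picks up an extra factor of order $L$, which is fatal here since $k_0$ was fixed before $L$. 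So either restrict the claim to fixed $\ell$ (which is all the paper uses and all that~\eqref{invariance} plus telescoping gives), or redo the large-$\ell$ tail with the disjoint-shift decomposition as in the theorem's proof rather than by inclusion and invariance alone.
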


\begin{proof}
This follows from~\eqref{invariance} as
$\mathbb{P}(Z_i^{L,-}=k, Z_i^{L,+}=\ell-k,I_i=1)=\mu(R^{i,L}_{k,\ell})$.
\end{proof}

\section{Entry times} \label{entry.times}

Let us consider the entry time $\tau_U(x)$ where $x\in\Omega$.

\begin{lem}\label{lemma.entry}
Let $U_n\subset\Omega$ be a nested sequence so that $\mu(U_n)\to0$ as $n\to\infty$.
Assume that the limits $\hat\alpha_\ell(L)=\lim_{n\to\infty}\hat\alpha_\ell(L,U_n)$ exist for $\ell=1,2,\dots$
and $L$ large enough.

Then
$$
\lim_{L\to\infty}\lim_{n\to\infty}\frac{\mathbb{P}(\tau_{U_n}\le L)}{L\mu(U_n)}=\alpha_1.
$$
\end{lem}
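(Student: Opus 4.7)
My plan is to recast the ratio as the reciprocal of an expected cluster size and then invoke Theorem~\ref{theorem.lambda} together with its following Remark. By $T$-invariance of $\mu$,
$$
\mathbb{P}(\tau_{U_n}\le L)
=\mu\Big(\bigcup_{i=1}^{L}T^{-i}U_n\Big)
=\mu\Big(T^{-1}\bigcup_{i=0}^{L-1}T^{-i}U_n\Big)
=\mu\Big(\bigcup_{i=0}^{L-1}T^{-i}U_n\Big)
=\mathbb{P}(W^{L-1}\ge 1),
$$
where $W^{L-1}=\sum_{i=0}^{L-1}I_i$ with $I_i=\ind_{U_n}\circ T^i$. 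Since $W^{L-1}$ vanishes off $\{W^{L-1}\ge 1\}$, the first-moment identity
$$
L\mu(U_n)=\mathbb{E}(W^{L-1})=\mathbb{P}(W^{L-1}\ge 1)\cdot\mathbb{E}(W^{L-1}\mid W^{L-1}\ge 1)
$$
rearranges to
$$
\frac{\mathbb{P}(\tau_{U_n}\le L)}{L\mu(U_n)}=\frac{1}{\mathbb{E}(W^{L-1}\mid W^{L-1}\ge 1)},
$$
so the lemma reduces to showing that this expected cluster size tends to $1/\alpha_1$ in the iterated limit $\lim_{L\to\infty}\lim_{n\to\infty}$.

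Writing the expected cluster size as $\sum_{k\ge 1}k\,\tilde\lambda_k(L-1,U_n)$ with $\tilde\lambda_k(L,U_n):=\mathbb{P}(W^L=k)/\mathbb{P}(W^L\ge 1)$, the point is that the cluster-counting argument carried out in the proof of Theorem~\ref{theorem.lambda} is formulated for $Z^L=W^{2L}$ but applies verbatim to $W^L$: the counting window shrinks by a factor of two, yet the invariance-based identity~\eqref{invariance} on the clusters $R^{i,K}_{k,\ell}$ and the telescoping $\sum_k(\alpha_k(K,U_n)-\alpha_{k+1}(K,U_n))=\alpha_1(K,U_n)$ are unaffected. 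This gives $\tilde\lambda_k(L,U_n)\to\lambda_k$ in the iterated limit. The Remark immediately following Theorem~\ref{theorem.lambda} then identifies $\sum_{k\ge 1}k\lambda_k=1/\alpha_1$, which is exactly what is needed.

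The main technical point is justifying the interchange of summation and iterated limit in $\sum_k k\,\tilde\lambda_k(L-1,U_n)\to\sum_k k\lambda_k$. Under the tail condition $\sum_k k\hat\alpha_k<\infty$ governing the setting of Theorem~\ref{theorem.lambda}, this is supplied by Part~(II) of Lemma~\ref{tail.lemma}, which yields the uniform tail estimate $\sum_{k\ge k_0}k\hat\alpha_k(L,U_n)<\varepsilon$ for $k_0,L$ large and $n$ sufficiently large, and hence dominated convergence. With this in hand $\mathbb{E}(W^{L-1}\mid W^{L-1}\ge 1)\to 1/\alpha_1$, and the lemma follows.
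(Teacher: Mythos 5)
Your reduction via the first-moment identity $L\mu(U_n)=\mathbb{E}(W^{L-1})=\mathbb{P}(W^{L-1}\ge1)\,\mathbb{E}(W^{L-1}\mid W^{L-1}\ge1)$ is correct, and invoking Theorem~\ref{theorem.lambda} is not circular (its proof does not use this lemma). The genuine gap is that everything after the reduction rests on the hypothesis $\sum_k k\hat\alpha_k<\infty$, which is an assumption of Theorem~\ref{theorem.lambda} and of the Remark following it, but is \emph{not} among the hypotheses of Lemma~\ref{lemma.entry}. This is not a cosmetic omission: in the paper's own Smith example the lemma's hypotheses and its conclusion hold with $\alpha_1=\tfrac12$, yet $\lambda_1=1$ and $\lambda_k=0$ for $k\ge2$, so $\sum_k k\lambda_k=1\neq 1/\alpha_1=2$. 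There the mass of the cluster-size distribution escapes to infinity, the interchange $\sum_k k\,\tilde\lambda_k(L-1,U_n)\to\sum_k k\lambda_k$ fails, and the identity $\sum_k k\lambda_k=1/\alpha_1$ that you import from the Remark is simply false (the conditional expectation still tends to $2$, but your route cannot see this). So as written you prove only a weaker statement, valid under the extra summability assumption. Two smaller inaccuracies: Lemma~\ref{tail.lemma}(II) yields $\mathbb{P}(W^L>0,\,I_{L'}=1)\le\eta\mu(U_n)$, not the uniform tail bound $\sum_{k\ge k_0}k\hat\alpha_k(L,U_n)<\varepsilon$ you attribute to it (that bound follows from the summability hypothesis and the termwise convergences), and converting such a bound into tail control of $\mathbb{P}(W^L=k)/\mathbb{P}(W^L\ge1)$ requires the cluster-counting (shift to the first occurrence) from the proof of Theorem~\ref{theorem.lambda}, which you only gesture at with ``applies verbatim''; the cleanest justification of the window change is simply $Z^L=W^{2L}$ together with monotonicity in $L$.

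For comparison, the paper's proof is direct and avoids any moment condition: it writes $\mathbb{P}(\tau_{U_n}\le L)$ as $L\mu(U_n)$ minus the contributions of multiple visits, decomposes each such term over the value $j$ of the first entry time, uses invariance to identify the conditional probabilities with $(1+o(1))\alpha_\ell(L-j)$, and sums, using only the existence and monotone structure of the limits $\hat\alpha_\ell(L)$, $\hat\alpha_\ell$. If you want to keep your (otherwise attractive) strategy, you must either add the hypothesis $\sum_k k\hat\alpha_k<\infty$ to the lemma, or replace the appeal to the Remark by a direct estimate of $\mathbb{E}(W^{L-1}\mid W^{L-1}\ge1)$ along the lines of the paper's visit-count decomposition.
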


\begin{proof}
If we write again $U$ for $U_n$ then
$$
\mathbb{P}(\tau_U\le L)
=\mu\!\left(\bigcup_{j=0}^{L}T^{-j}U\right)
=L\mu(U)-\sum_{\ell=2}^{L}\mathbb{P}(\tau_U^\ell\le L<\tau_U^{\ell+1})
$$
where
$$
\mathbb{P}(\tau_U^\ell\le L<\tau_U^{\ell+1})
=\sum_{j=0}^{L-\ell}\mathbb{P}(\tau_U^\ell\le L<\tau_U^{\ell+1}, \tau=j)
$$
and for $j=0,1,\dots,L-\ell$:
$$
\mathbb{P}(\tau_U^\ell\le L<\tau_U^{\ell+1}, \tau=j)
=\mathbb{P}(\tau_U^{\ell-1}\le L-j<\tau_U^\ell|U)\mu(U)
$$
for which we use
$$
\mathbb{P}(\tau_U^{\ell-1}\le L-j<\tau_U^\ell|U)=
(1+o(1))\alpha_\ell(L-j)
$$
as $n\to\infty$.
Let $\varepsilon>0$ then there exists $L_0$ so that $\hat\alpha_\ell<\varepsilon$ for all $\ell\ge L_0$.
This implies that $\sum_{\ell=L_0}^\infty\alpha_\ell(L)<\varepsilon$ for all $L$
as $\hat\alpha_\ell\ge\hat\alpha_\ell(L)\forall L$ and the $\hat\alpha_\ell$ are the tail sums of
the $\alpha_\ell$.
Moreover
$\alpha_\ell(L-j)=(1+\mathcal{O}^*(\varepsilon))\alpha_\ell(L)$
for all $j\le L-\sqrt{L}$ and $\ell\le L_0$ if $L$ is large enough.
Therefore
$$
\mathbb{P}(\tau_U^{\ell-1}\le L-j<\tau_U^\ell)
=(1+o(1))\alpha_\ell(L)
=(1+o(1))\alpha_\ell
$$
and
$$
\mathbb{P}(\tau_U^\ell\le L<\tau_U^{\ell+1})
=(1+o(1))L(1+\mathcal{O}(L^{-\frac12}))\mu(U)\alpha_\ell.
$$

Thus
$$
\mathbb{P}(\tau_U\le L)
=L\mu(U)\!\left(1-(1+o(1))(1+\mathcal{O}(L^{-\frac12}))\!\left(\sum_{\ell=2}^{L_0}\alpha_\ell+\varepsilon\right)\right)
$$
and therefore
$$
\lim_{n\to\infty}\frac{\mathbb{P}(\tau_{U_n}\le L)}{L\mu(U_n)}
=\alpha_1+\mathcal{O}(L^{-\frac12})+\mathcal{O}(\varepsilon)
=\alpha_1+\mathcal{O}(\varepsilon)
$$
for $L$ large enough. The statement of the lemma now follows by as $\varepsilon\to0$
which implies $L\to\infty$.
\end{proof}

\begin{remark}
In a similar way as in the previous lemma on can show for $\ell=2,3,\dots$ that
\begin{eqnarray*}
\mathbb{P}(\tau_{U_n}^\ell\le L)
&=&\sum_{j=0}^{L-\ell}\mathbb{P}(\tau_{U_n}^\ell\le L,\tau_{U_n}=j)\\
&=&\sum_{j=0}^{L-\ell}\mathbb{P}(\tau_{U_n}^{\ell-1}\le L-j|U_n)\mu(U_n)\\
&=&\mu(U_n)\sum_{j=0}^{L-\ell}\hat\alpha_\ell(L-j)(1+o(1))
\end{eqnarray*}
which implies as before that
$$
\lim_{L\to\infty}\lim_{n\to\infty}\frac{\mathbb{P}(\tau_{U_n}^\ell\le L)}{L\mu(U_n)}=\hat\alpha_\ell.
$$
Also
$$
\lim_{L\to\infty}\lim_{n\to\infty}\frac{\mathbb{P}(\tau_{U_n}^\ell\le L<\tau_{U_n}^{\ell+1})}{L\mu(U_n)}=\alpha_\ell
$$
for $\ell=2,3,\dots$.
\end{remark}

\section{The Compound Binomial Distribution} \label{binomial}

This section contains the abstract  approximation theorem which establishes the distance
between sums of $\{0,1\}$-valued dependent random variables $X_n$ and a random variable that
has a compound Binomial distribution.
It is used in Section~\ref{set_up_T1} in the proof of Theorem~1 and compares
 the number of occurrences in a finite time interval with the number of occurrences
 in the same interval
 for a compound binomial process.

 Let $Y_j$ be $\mathbb{N}$ valued i.i.d.\ random variables and
 denote $\lambda_\ell=\mathbb{P}(Y_j=\ell)$.
 Let $N$ be a (large) positive integer, $s>0$ a parameter and put $p=s/N$.
 If $Q$ is a binomially distributed random variable with parameters $(N,p)$,
 that is $\mathbb{P}(Q=k)=\binom{N}kp^k(1-p)^{N-k}$,
 then $W=\sum_{i=1}^QY_i$ is compound binomially distributed.
 The generating function of $W$ is
 $\varphi_W(z)
 =\left(p(\varphi_{Y_1}(z)-1)+1\right)^N$, where
 $\varphi_{Y_1}(z)=\sum_{\ell=0}^\infty z^\ell\lambda_\ell$
 is the generating function of $\tilde{Z}_1$. As $N$ goes to infinity, $Q$ converges to
 a Poisson distribution with parameter $s$ and $W$ converges to a compound
 Poisson distribution with parameters $s\lambda_\ell$. In particular
 $\varphi_W(z)\rightarrow \exp s(\varphi_{Y_1}(z)-1)$.
 (In the following theorem we assume for simplicity's sake that $N'$ and $\Delta$ are integers.)

\begin{thrm} \label{helper.theorem}
Let $(X_n)_{n \in \mathbb{N}}$ be a stationary $\{0,1\}$-valued process and
  $W = \sum_{i=0}^N X_i$ for some (large) integer $N$.
 Let $K, \Delta$ be positive integers so that $\Delta(2K+1)<N$ and
 define $Z=\sum_{i=0}^{2K}X_i$ and $W_a^b=\sum_{i=a}^b X_i$
 ($W=W_0^N$). Let $\tilde\nu$
 be the compound binomial distribution measure where the binomial part
 has values $p=\mathbb{P}(Z\ge1)$ and $N'=N/(2K+1)$ and the compound part has
 probabilities  $\lambda_\ell=\mathbb{P}(Z=\ell)/p$ .
Then there exists a constant $C_3$, independent of $K$ and $\Delta$,
such that
$$
\abs{\mathbb{P}(W=k) - \tilde\nu(\{k\})}
 \leq C_3(N'(\mathcal{R}_1 + \mathcal{R}_2) + \Delta\mathbb{P}(X_0=1)),
$$
where
\begin{eqnarray*}
\mathcal{R}_1 &=& \sup_{\substack {0< \Delta<M\le N'\\0 <q<N'-\Delta-1/2}}
 \left|\sum_{u=1}^{q-1}\!\left(\mathbb{P}\!\left(Z=u \land W_{\Delta(2K+1)}^{M(2K+1)}=q-u\right)
-\mathbb{P}(Z=u)\mathbb{P}\!\left(W_{\Delta(2K+1)}^{M(2K+1)}=q-u\right)\right)\right|\\
\mathcal{R}_2 &=& \sum_{n=2}^\Delta \mathbb{P}(Z\ge1 \land Z\circ T^{(2K+1)n}\ge1).
\end{eqnarray*}
\end{thrm}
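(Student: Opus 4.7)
The plan is to realize the target compound binomial $\tilde\nu$ as the law of $\sum_{j=0}^{N'-1}\tilde Z_j$, where the $\tilde Z_j$ are i.i.d.\ copies of $Z$ --- so that $\tilde\nu$ has generating function $\varphi_Z(z)^{N'}$ --- and then to bound the pointwise distance to the true law of $W=\sum_{j=0}^{N'-1} Z_j$, with $Z_j = Z\circ T^{j(2K+1)}$, by a one-block-at-a-time telescoping. The mixing quantity $\mathcal{R}_1$ will decouple a single block from a far-away tail, while $\mathcal{R}_2$ will control the probability that a hit in the first block is accompanied by a hit in the $\Delta$-block buffer inserted to make that decoupling possible.

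First I would partition $[0,N]$ into $N'$ consecutive blocks of length $2K+1$, writing $W = Z_0 + Z_1 + \cdots + Z_{N'-1}$ up to a boundary remainder of length less than $2K+1$ that is absorbed into the constant. Set
$$
\pi_M(k) = \mathbb{P}\!\left(\sum_{j=0}^{M-1} Z_j = k\right), \qquad \tilde\pi_M(k) = \bigl(\mathbb{P}(Z=\cdot)\bigr)^{*M}(k),
$$
so the statement reduces to bounding $|\pi_{N'}(k) - \tilde\pi_{N'}(k)|$, since $\tilde\pi_{N'}(\{k\}) = \tilde\nu(\{k\})$.

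The core is a recursion in $M$ obtained by conditioning on the first block. With $\hat W_M = \sum_{j=\Delta}^{M-1} Z_j$, I would write
$$
\pi_M(k) = \sum_u \mathbb{P}\!\left(Z_0 = u,\ \hat W_M = k-u\right) + E_{\mathrm{gap}},
$$
where $E_{\mathrm{gap}}$ collects configurations in which $Z_0\ge 1$ together with a hit in one of the discarded blocks $Z_2,\dots,Z_\Delta$; by stationarity $|E_{\mathrm{gap}}| \lesssim \mathcal{R}_2$. Next, I invoke $\mathcal{R}_1$ to replace the joint probability $\mathbb{P}(Z_0=u,\hat W_M=k-u)$ by the product $\mathbb{P}(Z_0=u)\,\mathbb{P}(\hat W_M=k-u)$: because only $1\le u\le k-1$ contribute nontrivially, the partial sum $\sum_{u=1}^{k-1}$ appearing in the definition of $\mathcal{R}_1$ matches exactly and produces an extra error of order $\mathcal{R}_1$. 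This gives
$$
\pi_M(k) = \sum_u \mathbb{P}(Z = u)\,\pi_{M-\Delta}(k-u) + \mathcal{O}(\mathcal{R}_1 + \mathcal{R}_2),
$$
where stationarity identifies the law of $\hat W_M$ with that of $\sum_{j=0}^{M-1-\Delta} Z_j$. Iterating the recursion roughly $N'/\Delta$ times (each step consuming $\Delta$ blocks) accumulates error $N'(\mathcal{R}_1 + \mathcal{R}_2)$.

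The main obstacle is to avoid paying a boundary correction $\Delta\mathbb{P}(X_0=1)$ at every recursion step. I would arrange this by comparing $\pi_M$ at each stage to the reduced convolution $\mathbb{P}(Z=\cdot) * \tilde\pi_{M-\Delta}$ rather than to $\tilde\pi_{M-1}$, so the $\Delta-1$ extra blocks suppressed at each step are never reintroduced in the middle of the induction. Restoring them only at the very end --- to match the claimed target $\tilde\pi_{N'}$ --- costs at most the probability that $\Delta$ independent copies of $Z$ produce any hit, which is $\le \Delta\mathbb{P}(X_0=1)$ and therefore enters once rather than per step. A second subtlety is that $\mathcal{R}_1$ controls a signed partial sum $|\sum_{u=1}^{q-1}\cdots|$ rather than a total variation; this is precisely what the fixed-$k$ (i.e.\ fixed-$q$) expansion above requires, and it is the reason the theorem is stated pointwise in $k$ rather than in a summed norm.
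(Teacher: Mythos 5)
Your skeleton (blocking into $N'$ windows of length $2K+1$, comparing with i.i.d.\ copies of $Z$, opening a $\Delta$-block gap, invoking $\mathcal{R}_1$ and $\mathcal{R}_2$) is the right one, but your decomposition is a renewal-type recursion that consumes $\Delta$ blocks per step and \emph{permanently discards} $\Delta-1$ of them, whereas the paper's proof is a Lindeberg/hybrid telescoping $\mathbb{P}(V_0^{N'}=k)-\mathbb{P}(\tilde V_0^{N'}=k)=\sum_j D_j(k)$ in which every intermediate hybrid retains all $N'$ blocks and the gap is opened only \emph{temporarily} inside each single-swap estimate. This difference is where your accounting fails: after unrolling your recursion about $N'/\Delta$ times you are comparing $W$ with a compound binomial built from only about $N'/\Delta$ copies of $Z$, so the blocks you must restore at the end number about $N'(\Delta-1)/\Delta$, not $\Delta$. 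Restoring them costs on the order of $N'\,\mathbb{P}(Z\ge1)\approx N\,\mathbb{P}(X_0=1)$, which in the intended application is of order $t$ --- an order-one error, not the claimed $\Delta\,\mathbb{P}(X_0=1)$. Your estimate accounts only for the $\Delta$ blocks of a single step.

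Two further steps fail as written. First, your gap error is not $\lesssim\mathcal{R}_2$: the events $\{\sum_{j=0}^{M-1}Z_j=k\}$ and $\{Z_0+\hat W_M=k\}$ already differ on configurations with $Z_0=0$ and a hit in one of the discarded blocks, so their symmetric difference is only contained in $\{\sum_{j=1}^{\Delta-1}Z_j\ge1\}$, of probability of order $\Delta(2K+1)\mathbb{P}(X_0=1)$ per step; summed over $N'/\Delta$ steps this is again of order $N\,\mathbb{P}(X_0=1)$. The paper avoids this by inserting the gap only inside joint probabilities that already contain the event $\{Z_0=u\ge1\}$ (cost $\mathbb{P}(Z_0\ge1,\ W_1^{\Delta(2K+1)}\ge1)$, i.e.\ an $\mathcal{R}_2$-type term) or, on the decoupled product side, weighted by $\mathbb{P}(Z_0=u)$, which produces the quadratic term $K\Delta\,\mathbb{P}(X_0=1)^2$ and hence $\Delta\,\mathbb{P}(X_0=1)$ after multiplying by $N'$ and using $N\mathbb{P}(X_0=1)=\mathcal{O}(1)$. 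Second, your assertion that ``only $1\le u\le k-1$ contribute nontrivially'' is false: the $u=0$ term (empty first block) is typically dominant and $u=k$ also contributes, and neither is covered by $\mathcal{R}_1$, which controls only the signed sum over $1\le u\le q-1$. The paper devotes a separate argument to precisely these two boundary terms ($\mathcal{R}(0)$ and $\mathcal{R}(q)$ are bounded through the middle terms, producing the factor $4$) \emph{before} the gap is opened; without some such device your induction step is not controlled by the quantities appearing in the statement.
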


\begin{proof}
Let us assume for simplicity's sake that $N$ is a multiple of $2K+1$ and put $N'=N/(2K+1)$.
Now put $Z_j=\sum_{i=j(2K+1)}^{(2K+1)-1}X_i=Z\circ T^{j(2K+1)}$ for $j=0,1,\dots,N'$.
Thus   $V=\sum_{i=0}^NX_i=\sum_{j=0}^{N'}Z_j$.
 Let $(\tilde{Z}_j)_{j \in \mathbb{N}}$ be a sequence of independent, identically distributed random variables taking values in $\mathbb{N}_0$ which have the same distribution as $Z_j$.
Moreover let us put
$V_k^\ell=\sum_{j=k}^\ell Z_j$ and similarly $\tilde{V}_k^\ell=\sum_{j=k}^\ell \tilde{Z}_j$.
We have to estimate the following quantity:
$$
\mathbb{P}(V_0^{N'}=k)-\mathbb{P}(\tilde{V}_0^{N'}=k)
=\sum_{j=0}^{N'-1}D_j(k),
$$
where
\begin{eqnarray*}
D_j(k)
&=&\mathbb{P}(\tilde{V}_0^{j-1}+V_j^{N'}=k)-\mathbb{P}(\tilde{V}_0^{j}+V_{j+1}^{N'}=k)\\
&=&\sum_{\ell=0}^k\mathbb{P}(\tilde{V}_0^{j-1}=\ell)\!\left(\mathbb{P}(V_j^{N'}=k-\ell)
-\mathbb{P}(\tilde{Z}_j+V_{j+1}^{N'}=k-\ell)\right)
\end{eqnarray*}
By invariance it suffices to estimate
$$
\mathbb{P}(V_0^M=q)-\mathbb{P}(\tilde{Z}_0+V_1^M=q)
=\sum_{u=0}^q\mathcal{R}(u)
$$
for every $M\le N'$ and $q$,
where
$$
\mathcal{R}(u)
=\mathbb{P}(Z_0=u, V_1^M=q-u)
-\mathbb{P}(\tilde{Z}_0=u)\mathbb{P}(V_1^M=q-u).
$$
Let us first single out the terms $u=q$ and $u=0$. For $u=0$ we see that
$$
\mathbb{P}(Z_0=0, V_1^M=q)
=\mathbb{P}(V_1^M=q)-\mathbb{P}(Z_0\ge1, V_1^M=q)
$$
and
$$
\mathbb{P}(Z_0=0)\mathbb{P}(V_1^M=q)
=\mathbb{P}(V_1^M=q)-\mathbb{P}(Z_0\ge1)\mathbb{P}(V_1^M=q).
$$
Consequently
\begin{eqnarray*}
\mathcal{R}(0)
&=&\mathbb{P}(Z_0=0, V_1^M=q)-\mathbb{P}(\tilde{Z}_0=0)\mathbb{P}(V_1^M=q)\\
&=&\mathbb{P}(Z_0\ge1)\mathbb{P}(V_1^M=q)-\mathbb{P}(Z_0\ge1, V_1^M=q)\\
&\le&\sum_{u=1}^q\mathcal{R}(u).
\end{eqnarray*}
Similarly one obtains for $u=q$:
$$
\mathcal{R}(q)
=\mathbb{P}(Z_0=q)\mathbb{P}(V_1^M\ge1)-\mathbb{P}(Z_0=q, V_1^M\ge1).
$$
This implies that
$$
|\mathcal{R}|\le4\sum_{u=1}^{q-1}|\mathcal{R}(u)|.
$$
In order to estimate $|\mathcal{R}(u)|$ for $u=1,2,\dots,q-1$ let
$0\le\Delta<M$ be the length of the gap we will now introduce. Then
$$
\left|\mathcal{R}(u)\right|
\le \mathcal{R}_1(u)+\mathcal{R}_2(u)+\mathcal{R}_3(u),
$$
where
\begin{eqnarray*}
\mathcal{R}_1&=&\max_{\substack{\Delta<M\le N'\\q}}\left|\sum_{u=1}^{q-1}\!\left(\mathbb{P}(Z_0=u, V_\Delta^M=q-u)-\mathbb{P}(\tilde{Z}_0=u)\mathbb{P}(V_\Delta^M=q-u)\right)\right|.
\end{eqnarray*}
The other two terms $\mathcal{R}_2$ and $\mathcal{R}_3$ account for opening a `gap'.
More precisely
$$
\mathcal{R}_2(u)=\left|\mathbb{P}(Z_0=u, V_\Delta^M=q-u)-\mathbb{P}(Z_0=u, V_1^M=q-u)\right|
$$
and since
$$
\{Z_0=u, W_1^M=q-u\}\setminus\{Z_0=u, W_\Delta^M=q-u\}
\subset \{Z_0=u, W_1^{\Delta-1}\ge1\}
$$
we get therefore
$$
\sum_{u=1}^{q-1}\mathcal{R}_2(u)\le\mathbb{P}(Z_0\ge1, W_1^{\Delta-1}\ge1).
$$
For the third term we get
\begin{eqnarray*}
\mathcal{R}_3(u)&=&\mathbb{P}(\tilde{Z}_0=u)
\left|\mathbb{P}(V_1^M=q-u)-\mathbb{P}(V_\Delta^M=q-u)\right|.
\end{eqnarray*}
To estimate $\mathcal{R}_3$ observe that ($q'=q-u$)
$$
\mathbb{P}(V_1^M=q')
=\mathbb{P}(Z_1\ge1, V_1^M=q')+\mathbb{P}(Z_1=0, V_1^M=q')
$$
where
$$
\mathbb{P}(Z_1=0, V_1^M=q')
=\mathbb{P}(Z_1=0, V_2^M=q')
=\mathbb{P}(V_2^M=q')-\mathbb{P}(Z_1\ge1, V_2^M=q').
$$
Hence
$$
\mathbb{P}(V_1^M=q')-\mathbb{P}(V_2^M=q')
=\mathbb{P}(Z_1\ge1, V_1^M=q')-\mathbb{P}(Z_1\ge0, V_2^M=q')
$$
which implies more generally
$$
|\mathbb{P}(V_j^M=q')-\mathbb{P}(V_{j+1}^M=q')|
\le \mathbb{P}(Z_j\ge1)\le (2K+1)\mathbb{P}(X_0=1)
$$
for any $k=1,\dots,\Delta$.
Hence
$$
|\mathbb{P}(V_1^M=q')-\mathbb{P}(V_\Delta^M=q')|
\le\sum_{j=1}^{\Delta-1}\mathbb{P}(Z_j\ge1)
\le (2K+1)\Delta\mathbb{P}(X_0=1)
$$
and thus
$$
\sum_{u=1}^{q-1}\mathcal{R}_3(u)
\le (2K+1)\Delta\mathbb{P}(X_0=1)\sum_{u=1}^{q-1}\mathbb{P}(Z_0=u)
\le (2K+1)\Delta\mathbb{P}(X_0=1)^2
$$
since
$\{Z_0\ge1\}\subset\bigcup_{j=0}^{2K}\{X_j=1\}$.
We now can estimate one of the gap terms:
$$
\mathcal{R}_3
=\sum_{u=1}^{q-1}\mathcal{R}_3(u)
\le 2(2K+1)\Delta\mathbb{P}(X_0=1)^2
$$
for all $q$ and $M$.

Finally, from the previous estimates we obtain for  $k \leq N$,
$$
\abs{\mathbb{P}(V_0^{N'}=k)-\mathbb{P}(\tilde{V}_0^{N'}=k)}
\leq \const N'(\mathcal{R}_1+\mathcal{R}_2+K\Delta \mathbb{P}(X_0=1)^2).
$$
Since $N\mathbb{P}(X_0=1)=\mathcal{O}(1)$ and $N'=N/(2K+1)$ we obtain the RHS in
 the theorem.

 It remains to show that $\mathbb{P}(\tilde{V}_0^{N'}=k)=\tilde\nu(\{k\})$. To see
 this put $p=\mathbb{P}(\tilde{Z}_1\ge 1)$  and let $Y_j$ be $\mathbb{N}$-valued
 i.i.d.\ random variables with distribution $\mathbb{P}(Y_j=\ell)=\frac1p\mathbb{P}(\tilde{Z}_j=\ell)=\lambda_\ell$
 for $\ell=1,2,\dots$. Then $Q=|\{i\in[0,N']:\; \tilde{Z}_i\not=0\}|$ is binomially distributed
 with parameters $(N',p)$ and consequently $\tilde{V}=\sum_{i=1}^QY_i$ is
 compound binomial.
 \end{proof}

\section{Proof of Theorem 1}\label{proof.theorem1}

\subsection{Compound binomial approximation of the return times distribution} \label{set_up_T1}
To prove Theorem~1 we will employ the  approximation theorem from Section~\ref{binomial}
where we put $U=B_\rho(\Gamma)$.
 Let $X_i=\ind_{U} \circ T^{i-1}$, then we put $N = \floor{t/\mu(U)}$,
 where $t$ is a positive parameter. Let $K$ be an integer and put as before
 $V_a^b=\sum_{j=a}^bZ_j$, where the $Z_j=\sum_{i=j(2K+1)}^{(j+1)(2K+1)-1}X_i$
 are stationary random variables.
Then for any $2 \leq \Delta \leq N'=N/(2K+1)$ (for simplicity's sake we assume
$N$ is a multiple of $2K+1$)
\begin{equation} \label{errorUSE}
\Abs{\mathbb{P}(V_0^{N'}=k) - \tilde\nu(\{k\})} \;
\leq \; C_3 ( N'(\mathcal{R}_1+\mathcal{R}_2) + \Delta \mu(U)),
\end{equation}
where
\begin{align*}
\mathcal{R}_1 &= \sup_{\substack {0< \Delta<M\le N'\\0 <q<N'-\Delta-1/2}}
 \left|\sum_{u=1}^{q-1}\!\left(\mathbb{P}\!\left(Z_0=u \land V_{\Delta}^{M}=q-u\right)
-\mathbb{P}(Z_0=u)\mathbb{P}\!\left(V_{\Delta}^{M}=q-u\right)\right)\right|\\
\mathcal{R}_2 &= \sum_{j=1}^{\Delta} \mathbb{P}(Z_1\ge1\land Z_j\ge1),
\end{align*}
and $\tilde\nu$ is the compound binomial distribution with parameters
$p=\mathbb{P}(Z_j\ge1)$ and distribution $\frac{t}p\mathbb{P}(Z_j=k)$.
Notice that $\mathbb{P}(V_0^{N'}=k)=0$ for $k>N$ and also
$\tilde\nu(\{k\})=\mathbb{P}(\tilde{V}_0^{N'}=k)=0$ for $k>N$.

\vspace{0.5cm}
\noindent We now proceed to estimate the error between the distribution of $S$ and a
compound binomial  based on Theorem~\ref{helper.theorem}.

\subsection{Estimating $\mathcal{R}_1$} \label{est_R1_section}

Let us fix $\rho$ for the moment and put $U=B_\rho(\Gamma)$.
Fix $q$ and $u$ and we want to estimate the quantity
$$
\mathcal{R}_1(q,u) =
 \left|\mathbb{P}\!\left(Z_0=u, V_{\Delta}^{M}=q-u\right)
-\mathbb{P}(Z_0=u)\mathbb{P}\!\left(V_{\Delta}^{M}=q-u\right)\right|
$$
In order to use the decay of correlations~(II) to obtain an estimate for
$\mathcal{R}_1(q,u)$ we approximate $\ind_{Z_0=u}$ by Lipschitz functions
from above and below as follows. Let $r>0$ be small ($r<\!\!<\rho$) and put
$U''(r)=B_{r}(U)$ for the outer approximation of $U$ and
$U'(r)=(B_{r}(U^c))^c$ for the inner approximation.
We then consider the set $\mathcal{U}=\{Z_0=u\}$ which is a disjoint
union of sets
$$
\bigcap_{j=1}^uT^{-v_j}U\cap\bigcap_{i\in[0,2K+1]\setminus\{v_j:j\}}T^{-i}U^c
$$
where $0\le v_1<v_2<\cdots<v_u\le 2K+1$ the $u$ entry times vary over all
possibilities. Similarly we get its outer approximation $\mathcal{U}''(r)$ and
its inner approximation $\mathcal{U}'(r)$ by using $U''(r)$ and $U'(r)$ respectively.
 We now consider Lipschitz continuous
functions approximating $\ind_{\mathcal{U}}$ as follows
\begin{equation*}
\phi_r(x) =
\begin{cases}
1 & \text{on $\mathcal{U}$} \\
0 & \text{outside $\mathcal{U}''(r)$}
\end{cases}
\hspace{0.7cm} \text{and} \hspace{0.7cm}
\tilde{\phi}_r(x) =
\begin{cases}
1 & \text{on $\mathcal{U}'(r)$} \\
0 & \text{outside $\mathcal{U}$}
\end{cases}
\end{equation*}
with both linear in between. The Lipschitz norms of both $\phi_r$ and $\tilde{\phi}_r$
 are bounded by $a^{2K+1}/r$ where $a=\sup_{x\in\mathcal{G}}|DT(x)|$.
  By design $\tilde{\phi}_r \leq \ind_{Z_0=u} \leq \phi_r$.

We obtain
\begin{align*}
\mathbb{P}\!\left(Z_0=u, V_{\Delta}^{M}=q-u\right)
-\mathbb{P}(Z_0=u)\mathbb{P}\!\left(V_{\Delta}^{M}=q-u\right)\hspace{-3cm} \\
& \leq \int_M \phi_r \; \cdot \ind_{V_{\Delta}^{M}=q-u} \, d\mu - \int_M \ind_{Z_0=u} \, d\mu \, \int_M \ind_{V_{\Delta}^{M}=q-u} \, d\mu \\[0.2cm]
& =X+Y
\end{align*}
where
\begin{align*}
X&=\left(\int_M \phi_r \, d\mu - \int_M \ind_{Z_0=u} \, d\mu \right)
 \int_M \ind_{V_{\Delta}^{M}=q-u} \, d\mu\\
Y&=\int_M \phi_r \; (\ind_{V_{\Delta}^{M}=q-u} ) \, d\mu
- \int_M \phi_r \, d\mu \, \int_M \ind_{V_{\Delta}^{M}=q-u} \, d\mu .
\end{align*}
The two terms $X$ and $Y$ are estimated separately.
The first term is readily estimated by:
$$
X \leq\mathbb{P}(V_{\Delta}^{M}=q-u)  \, \int_M (\phi_r - \ind_{Z_0=u}) \, d\mu
 \leq \mu(\mathcal{U}''(r) \setminus \mathcal{U}(r)).
$$
In order to estimate the second term $Y$ we use the decay of correlations.
For this we have to approximate $\ind_{V_{0}^{M-\Delta}=q-u}$ by a function which is
constant on local stable leaves. (Note that if the map is expanding then there are no stable
leaves and $Y$ is straighforwardly estimated by $\mathcal{C}(\Delta)\|\phi_r\|_{Lip}$
as $\|\ind_{V_\delta^M=q-u}\|_\infty=1$.)
Let us define
$$
\mathcal{S}_n
=\bigcup_{\substack{\gamma^s\\T^n\gamma^s\subset U}}T^n\gamma^s,
\hspace{6mm}
\partial\mathcal{S}_n
=\bigcup_{\substack{\gamma^s, T^n\gamma^s\not\subset U\\T^n\gamma^s\cap U\not=\varnothing}}T^n\gamma^s
$$
and
$$
\mathscr{S}_\Delta^{M}=\bigcup_{n=\Delta(2K+1)}^{M(2K+1)}\mathcal{S}_n,
\hspace{6mm}
\partial\mathscr{S}_\Delta^{M}=\bigcup_{n=\Delta(2K+1)}^{M(2K+1)}\partial\mathcal{S}_n.
$$
The set
$$
\mathscr{S}_\Delta^{M}(q)=\{V_0^{M-\Delta}=q-u\}\cap\mathscr{S}_\Delta^{M}
$$
is then a union of local stable leaves. This follows from the fact that by construction
$T^n y\in U$ if and only if $T^n\gamma^s(y)\subset U$.
We also have
$\{V_0^{M-\Delta}=q-u\}\subset\tilde{\mathscr{S}}_\Delta^{M}(q)$
where the set $\tilde{\mathscr{S}}_\Delta^{M}(k)=\mathscr{S}_\Delta^{M}(k)\cup\partial\mathscr{S}_\Delta^{M}$
is a union of local stable leaves.

Denote by  $\psi_\Delta^{M}$ the characteristic function of the set $\mathscr{S}_\Delta^{M}(k)$
and by $\tilde\psi_\Delta^{M}$ the characteristic function for
$\tilde{\mathscr{S}}_\Delta^{M}(k)$. Then $\psi_\Delta^{M}$ and $\tilde\psi_\Delta^{M}$
are constant on local stable leaves and satisfy
$$
\psi_\Delta^{M}\le\ind_{V_0^{M-\Delta}=q-u}\le\tilde\psi_\Delta^{M}.
$$
Since $\{y:\psi_\Delta^{M}(y)\not=\tilde\psi_\Delta^{M}(y)\}\subset\partial\mathscr{S}_\Delta^{M}$
we need to estimate the measure of $\partial\mathscr{S}_\Delta^{M}$.

 By the contraction property
  $\mbox{diam}(T^n\gamma^s(y))\le \delta(n)\lesssim n^{-\kappa}$ outside the set $\mathcal{G}_n^c$  and consequently
  $$
  \bigcup_{\substack{\gamma^s\subset \mathcal{G}_n\\T^n\gamma^s\not\subset U\\
 T^n\gamma^s\cap U\not=\varnothing }}
  T^n\gamma^s \subset U''(\delta(n))\setminus U'(\delta(n)).
  $$
Therefore
\begin{eqnarray*}
\mu(\partial\mathscr{S}_\Delta^{M})
&\le&\mu\left(\bigcup_{n=\Delta(2K+1)}^{M(2K+1)}
T^{-n}\left(U''(\delta(n))\setminus U'(\delta(n))\right)\right)
+\sum_{n=\Delta(2K+1)}^\infty\mu(\mathcal{G}_{n}^c)\\
&\le&\sum_{n=\Delta(2K+1)}^{M(2K+1)}\mu(U''(\delta(n))\setminus U'(\delta(n)))
+\sum_{n=\Delta(2K+1)}^\infty\mu(\mathcal{G}_{n}^c)
\end{eqnarray*}
where the last term is estimated by assumption~(III) as follows
$$
\sum_{n=\Delta(2K+1)}^\infty\mu(\mathcal{G}_{n}^c)
=\mathcal{O}(1)\sum_{n=\Delta(2k+1)}^\infty n^{-q}
=\mathcal{O}(K^{-q}\Delta^{-q+1})
=\mathcal{O}(K^{-q}\rho^\epsilon\mu(U))
$$
where we put $\Delta\sim\rho^{-\upsilon}$ and we also assume that $q$ satisfies
$\upsilon(q-1)>d_1$ (that is $\epsilon=\upsilon(q-1)-d_1>0$).
Now by assumption~(VI):
$$
\mu(\partial\mathscr{S}_\Delta^{M})
=\mathcal{O}(1) \sum_{n=\Delta}^{\infty}\frac{n^{-\kappa\eta}}{\rho^\beta}\mu(U)
+ \rho^\epsilon\mu(U)
=\mathcal{O}(\rho^{v(\kappa\eta-1) -\beta}+\rho^\epsilon) \mu(U)
$$
with $\delta(n) = \mathcal{O}(n^{-\kappa})$ and $\Delta\sim\rho^{-v}$ where this time we
also need that $v > \frac{\beta}{\kappa\eta-1}$ which is determined in Section~\ref{total_error}  below.
Both constraints imply that we must have $v>\max\{\frac{d_1}{q-1},\frac\beta{\kappa\eta-1}\}$.
If we split $\Delta=\Delta'+\Delta''$
then, using assumption~(II), we can estimate as follows:
\begin{align*}
Y&=\left|  \int_M \phi_r \; T^{-\Delta'}(\ind_{V_{\Delta''}^{M-\Delta'}=q-u}  ) \, d\mu
- \int_M \phi_r   \, d\mu \, \int_M \ind_{V_0^{M-\Delta}=q-u} \, d\mu\right|\\
&
\le \mathcal{C}(\Delta')\|\phi_r\|_{Lip}\|\ind_{\tilde{\mathscr{S}}_{\Delta''}^{M-\Delta'}}\|_{\mathscr{L}^\infty}
+2\mu(\partial\mathscr{S}_{\Delta''}^{M-\Delta'}).
  \end{align*}
Hence
\begin{eqnarray*}
\mu(\mathcal{U} \cap T^{-\Delta}\{V_{0}^{M-\Delta}=q-u\})
- \mu(\mathcal{U}) \, \mathbb{P}(V_{0}^{M-\Delta}=q-u)
\hspace{-4cm}&&\\
&\leq &a^{2K+1}\frac{\mathcal{C}(\Delta/2)}{ r} + \mu(\mathcal{U}(r)\setminus \mathcal{U} )
+\mu(\partial\mathscr{S}_\Delta^{M})
\end{eqnarray*}
by taking $\Delta' = \Delta'' = \frac{\Delta}{2}$.
A similar estimate from below can be done using $\tilde\phi_\rho$. Hence
\begin{eqnarray*}
\mathcal{R}_1
&\leq &c_2\left(a^{2K+1}\frac{\mathcal{C}(\Delta/2)}{r}
+\mu(\mathcal{U}''(r)\setminus\mathcal{U}'(r))\right)
+\mu(\partial\mathscr{S}_\Delta^{M})\\
&\lesssim &a^{2K+1}\frac{\mathcal{C}(\Delta/2)}{r}
+\mu(\mathcal{U}''(r)\setminus\mathcal{U}'(r))
+(\rho^{v(\kappa\eta-1) -\beta}+\rho^\epsilon) \mu(U).
\end{eqnarray*}

In the exponential case when $\delta(n)=\mathcal{O}(\vartheta^n)$  we choose
$\Delta=s\abs{\log\rho}$ for some $s>0$ and obtain the estimate
$$
\mathcal{R}_1
\leq c_2\left(a^{2K+1}\frac{\mathcal{C}(\Delta/2)}{r}
+\mu(\mathcal{U}''(r)\setminus\mathcal{U}'(r)))\right)
+\mathcal{O}(\rho^{s\abs{\log\vartheta}-\beta}+\rho^\epsilon)\mu(U).
$$

\subsection{Estimating the  terms  $\mathcal{R}_2$}

We will first estimate the measure of $U\cap T^{-j}U$ for positive $j$.
Fix $j$ and and let $\gamma^u$ be an unstable local leaf through $U$.
Let us define
$$
\mathscr{C}_j(U,\gamma^u)=\{\zeta_{\varphi,j}: \zeta_{\varphi,j}\cap U\not=\varnothing,\varphi\in \mathscr{I}_j\}
$$
 for the cluster of $j$-cylinders that covers the set $U$.
 As before the sets $\zeta_{\varphi,k}$ are $\varphi$-pre-images of embedded $R$-balls in
 $T^j\gamma^u$.
Then
\begin{eqnarray*}
\mu_{\gamma^u}(T^{-j}U\cap U)
&\le&\sum_{\zeta\in\mathscr{C}_j(U,\gamma^u)}\frac{\mu_{\gamma^u}(T^{-j}U\cap \zeta)}{\mu_{\gamma^u}(\zeta)}\mu_{\gamma^u}(\zeta)\\
&\le&\sum_{\zeta\in \mathscr{C}_j(U,\gamma^u)}c_3\omega(j)
\frac{\mu_{T^j\gamma^u}(U\cap T^j\zeta)}
{\mu_{T^j\gamma^u}(T^j\zeta)}\mu_{\gamma^u}(\zeta)
\end{eqnarray*}
The denominator is uniformly bounded from below because
$\mu_{T^j\gamma^u}(T^j\zeta)=\mu_{T^j\gamma^u}(B_{R,\gamma^u}(y_k))$
for some $y_k$. Thus, by assumption (I), we have:
\begin{eqnarray*}
\mu_{\gamma^u}(T^{-j}U\cap U)
&\le& c_4\omega(j)\mu_{T^j\gamma^u}(U)
\sum_{\zeta\in \mathscr{C}_j(U,\gamma^u)}\mu_{\gamma^u}(\zeta)\\
&\le& c_4\omega(j)\mu_{T^j\gamma^u}(U)\,  L\,
\mu_{\gamma^u}\!\left(\bigcup_{\zeta\in \mathscr{C}_j(U,\gamma^u)}\zeta\right)
\end{eqnarray*}
Now, since outside the set $\mathcal{G}_n^c$ one has
 $$
 \bigcup_{\zeta\in \mathscr{C}_j(U,\gamma^u)}\zeta
\subset B_{j^{-\kappa}}(U)
$$
where by assumption $\mu_{\gamma^u}(B_{j^{-\kappa}}(U))=\mathcal{O}( j^{-\kappa u_0})$.
Therefore
$$
\mu_{\gamma^u}\left(\bigcup_{\zeta\in\mathcal{C}_j(U,\gamma^u)}\zeta\right)
\le \mathcal{O}(j^{-\kappa u_0})+\mu(\mathcal{G}_j^c)
=\mathcal{O}(j^{-\kappa u_0}+j^{-q})
$$
and consequently
$$
\mu_{\gamma^u}(T^{-j}U\cap U)\le c_5\omega(j)\mu_{T^j\gamma^u}(U)(j^{-\kappa u_0}+j^{-q}).
$$
Since   $d\mu=d\mu_{\gamma^u}d\upsilon(\gamma^u)$
we obtain
$$
\mu(T^{-j}U\cap U)\le c_6\omega(j)\mu(U)(j^{-\kappa u_0}+j^{-q}).
$$

Next we  estimate for $j\ge2$ the quantity
\begin{eqnarray*}
\mathbb{P}(Z_0\ge1, Z_j\ge1)
&\le& \sum_{0\le k, \ell <2K+1}\mu(T^{-k}U\cap T^{-\ell-(2K+1)j}U)\\
&=&\sum_{u=(j-1)(2K+1)}^{(j+1)(2K+1)}((2K+1)-|u-j(2K+1)|)\mu(U\cap T^{-u}U)
\end{eqnarray*}
and consequently obtain
\begin{eqnarray*}
\sum_{j=2}^\Delta\mathbb{P}(Z_0\ge1\land Z_j\ge1)
&\le&(2K+1)\sum_{u=2K+1}^{(\Delta+1)(2K+1)}\mu(U\cap T^{-u}U)\\
&\le&c_7K\mu(U)\sum_{u=2K+1}^{(\Delta+1)(2K+1)}\omega(u)(u^{-\kappa u_0}+u^{-q})\\
&\le&c_8K\mu(U)K^{-\sigma}
\end{eqnarray*}
since $\omega(j)=\mathcal{O}(j^{-\kappa'})$, provided
$\sigma=\min\{\kappa u_0, q\}-\kappa'-1$ is less than  $1$.

For the term $j=1$ let $K'<K$ and put $Z'_0=\sum_{i=2K+1-K'}^{2K+1}X_i$
and $Z''_0=Z_0-Z'_0$. Then
$$
\mathbb{P}(Z_0\ge1, Z_1\ge1)
\le\mathbb{P}(Z''_0\ge1, Z_1\ge1)+\mathbb{P}(Z'_0\ge1),
$$
where $\mathbb{P}(Z'_0\ge1)\le K'\mu(U)$. Since by the above estimates
$$
\mathbb{P}(Z''_0\ge1, Z_1\ge1)
\lesssim KK'^{-\sigma}\mu(U)
$$
we conclude that
$$
\mathbb{P}(Z_0\ge1, Z_1\ge1)
\lesssim \mu(U)(K'+KK'^{-\sigma}).
$$

The entire error term is now estimated by
$$
N'\mathcal{R}_2
\le N'\sum_{j=1}^\Delta\mathbb{P}(Z_0\ge1, Z_j\ge1)
\lesssim N'\mu(U)(K^{1-\sigma}+KK'^{-\sigma}+K')
\lesssim t\!\left(K'^{-\sigma}+\frac{K'}K\right)
$$
as $K>K'$.

If $\diam\zeta$ ($\zeta$ $n$-cylinders) and $\mu(\mathcal{G}_n^c)$
decay super polynomially then
$$
\mathcal{R}_2   \lesssim \delta(K')^{u_0}+\delta'(K')+K'/K,
$$
where $\diam \zeta\le\delta(n)$, $\mu(\mathcal{G}_n^c)\le\delta'(n)$ are super polynomial.

In the exponential case ($\delta(n),\delta'(n)=\mathcal{O}(\vartheta^n)$) one has
$$
\mathcal{R}_2 \lesssim \vartheta^{(u_0\land 1)K'}+K'/K.
  $$

\subsection{The total error}\label{total_error} For the total error  we now put $r=\rho^w$
and as above $\Delta=\rho^{-v}$ where $v<d_0$ since $\Delta<\!\!<N$ and $N\ge \rho^{-d_0}$.
Moreover we put $K'=K^\alpha$ for $\alpha<1$.
Then $\mathcal{C}(\Delta)=\mathcal{O}(\Delta^{-p})=\mathcal{O}(\rho^{pv})$
and thus (in the polynomial case)
\begin{eqnarray*}
\left|\mathbb{P}(W=k)-\tilde\nu(\{k\})\right|
&\lesssim& N'\!\left(a^{2K+1}\frac{\mathcal{C}(\Delta)}{r}
+\mu(\mathcal{U}''(r)\setminus\mathcal{U}'(r))\right)+\frac{t}{K^{\sigma'}}
+\frac{t}K(\rho^{v(\kappa\eta-1)-\beta}+\rho^\epsilon)\\
&\lesssim&a^{2K+1}\rho^{vp-w-d_1}+\rho^{w\eta-\beta}+\frac{t}{K^{\sigma'}}
+\frac{t}K(\rho^{v(\kappa\eta-1)-\beta}+\rho^\epsilon)\end{eqnarray*}
as $N'\mu(U)=\frac{s}{2K+1}$ and $s=N'\mathbb{P}(Z^K\ge1)$,
 where we put $\sigma'=\min\{\alpha\sigma,1-\alpha\}>0$.
We can choose $v<d_0$ arbitrarily close to $d_0$ and then require
 $d_0p-w-d_1>0$,  $w\eta-\beta>0$ and $d_0(\kappa\eta-1)-\beta>0$. We can choose $w>\frac\beta\eta$ arbitrarily
 close to $\frac\beta\eta$ and can
 satisfy all requirements if $p>\frac{\frac\beta\eta+d_1}{d_0}$ in the case when $\mathcal{C}$ decays polynomially
 with power $p$, i.e.\ $\mathcal{C}(k)\sim k^{-p}$.

 In the exponential case ($\diam\zeta=\mathcal{O}(\vartheta^n)$ for $n$ cylinders $\zeta$)
we obtain
$$
|\mathbb{P}(W=k)-\tilde\nu(\{k\})|
\lesssim
a^{2K+1}\rho^{s\abs{\log\vartheta}-w-d_1}+\rho^{w\eta-\beta}
+\frac{t}{K^{\sigma'}}+\frac{t}K(\rho^{s\abs{\log\vartheta}-\beta}+\rho^\epsilon).
$$

 \subsection{Convergence to the compound Poisson distribution}
 First observe that for $t>0$ we take $N=t/\mu(U)$ and since by Lemma~\ref{lemma.entry}
 $N'\alpha_1\mu(U)=s$ thius implies that $s=\alpha_1t$.
 We will have to do a double limit of first letting $\rho$ go to zero and then to let
 $K$ go to infinity.
If  $\rho\to0$ then $\mu(U)\to 0$ which implies that $N'\to\infty$ and that the compound
binomial distribution $\tilde\nu$ converges to the
compound  Poisson distribution $\tilde\nu_K$ for the parameters $t\lambda_\ell(K)$.
Thus for every $K$:
 $$
 \mathbb{P}(W=k)\longrightarrow\tilde\nu_K(\{k\})+\mathcal{O}(tK^{-\sigma'}).
$$
Now let $K\to\infty$. Then $\lambda_\ell(K)\to\lambda_\ell$ for all $\ell=1,2,\dots$
and $\tilde\nu_K$ converges to the compound Poisson distribution $\nu$ for the
parameters $s\lambda_\ell=\alpha_1t\lambda_\ell$. Finally we obtain
 $$
 \mathbb{P}(W=k)\longrightarrow\nu(\{k\})
$$
as $\rho\to0$. This concludes the proof of Theorem~\ref{thm1}.
\qed
\section{Examples}\label{examples}

\subsection{A non-uniformly expanding map}
On the torus $\mathbb{T}=[0,1)\times[0,1)$ we consider the affine map $T$
given by the matrix $A=\left(\begin{array}{cc}1&1\\0&a\end{array}\right)$
for some integer $a\ge2$.
This is a partially hyperbolic map since $A$ has one eigenvalue equal to $1$
and is uniformly expanding in the $y$-direction. Horizontal lines are mapped
to horizontal lines and in particular the line $\Gamma=\{(x,0): x\in[0,1)\}$
is an invariant set which entirely consists of fixed points.
Since in estimating the error terms $\mathcal{R}_2$
involves terms of the form $U\cap T^{-n}U$ we only need to consider the
uniformly expanding $y$-direction  when verifying the assumption~(III)(iii).
This means the vertical diameter of $n$-cylinders $\zeta$ contracts exponentially
like $a^{-n}$.

The Lebesgue measure $\mu$ is invariant. To see this notice that $T$ has $a$ inverse
branches whose Jacobians all have determinant $\frac1a$. The neighbourhoods $U$ of $\Gamma$ are
$B_\rho(\Gamma)$. In Assumptions~(IV) and~(V) we thus have  $d_0=d_1=u_0=1$
and in the ``annulus condition''~(VI) one can take $\eta=\beta=1$.

Clearly, the Lebesgue measure is invariant under $A$.
Although this map does not have good decay of correlation we can still apply our
method because the return sets $B_\rho(\Gamma)$ are of very special form
since $A$ maps horizontal lines $y\times [0,1)$ to horizontal lines $y'\times[0,1)$
($y'=ay \mod 1$)  and in vertical direction
is uniformly expanding by factor $a$.

The limiting return times are in the limit compound Poisson distributed where
It is straightforward to determine that
$$
\hat\alpha_{k+1}
=\lim_{\rho\to0}\mu_{B_\rho(\Gamma)}\!\left(T^{-1}B_\rho(\Gamma)\cap T^{-2}B_\rho(\Gamma)
\cap\cdots\cap T^{-k}B_\rho(\Gamma)\right)=\left(\frac1a\right)^k,
$$
$k=1,2,\dots$, since $\mu\!\left(\bigcap_{j=0}^kT^{-j}B_\rho(\Gamma)\right)=a^{-k}\rho$.
 Consequently $\alpha_k=\hat\alpha_k-\hat\alpha_{k+1}=\left(1-\frac1a\right)\left(\frac1a\right)^{k-1}$
 and by Theorem~\ref{theorem.lambda} $\lambda_k=(1-\frac1a)(\frac1a)^{k-1}$, $k=1,2,\dots$,
which shows that the return times to a strip neighbourhood of $\Gamma$
is in the limit P\'olya-Aeppli distributed. (The extremal index is $\alpha_1=1-\hat\alpha_2=1-\frac1a$.)

\subsection{Regenerative processes}

Here we give two examples, one which exhibits some pathology and which was also
recently used in~\cite{AFF19} and another one to show that nearly all
compound Poisson distributions can be achieved.

\subsubsection{Smith example}
To emphasise the regularity condition made in Theorem~\ref{theorem.lambda}
we look at an example by Smith~\cite{Smi88} which was also recently used in~\cite{AFF19', AFF19}
to  exhibit some pathology.

Let $Y_j$ for $j\in\mathbb{Z}$ be i.i.d.\ $\mathbb{N}$-valued random variables
and denote $\gamma_k=\mathbb{P}(Y_j=k)$ its probability density.
For each $k\in\mathbb{N}$, put $p_k=1-\frac1k$ and $q_k=\frac1k$.
Then we define the regenerative process $X_j, j\in\mathbb{Z}$, as follows:
the sequence of $\vec{X}=(\dots,X_{-1},X_0,X_1,X_2,\dots)$ is parsed
into blocks of lengths $\zeta_i\in\mathbb{N}$ so that the sequence
of integers $N_i$ satisfy $N_{i+1}=N_i+\zeta_i$. Then $X_{N_i}=k$ with
probability $\gamma_k$ and $\mathbb{P}(\zeta_i=1)=q_k$ and
$\mathbb{P}(\zeta_i=k)=q_k$. If $\zeta_i=k$ then we put
$X_{N_i+\ell}=k$ for $\ell=1,2,\dots,k$.
That means every time the symbol $k$ is chosen (with probability $\gamma_k$)
then appears a block of only that one symbol with probability $p_k$
or as a block of length $k+1$ of $k$ times repeated symbol $k$ with probability $q_k$.

The sets $U_m=\{\vec{X}: X_0>m\}$ form a nested sequence within the space
$\Omega=\{\vec{X}\}$ which carries the left shift transform $\sigma:\Omega\circlearrowleft$.
Moreover there exists a $\sigma$-invariant probability measure $\mu$
for which $\mu(\{k\})= \gamma_k$.
To find $\hat\alpha_k(L)$ for (large) $L$ we let $m>\!\!L$. For $\vec{X}\in\Omega$,
let $i$ be so that $N_i(\vec{X})\le0<N_{i+1}(\vec{X})$. Then $\zeta_i=N_{i+1}-N_i$ is the length
of the block containing $X_0$.
Let $\varepsilon>0$, then for all $k$ large enough we have
$$
\mathbb{P}(\zeta_i=1|X_0=k)=\frac{p_k}{p_k+(k+1)q_k}\in\left(\frac12-\varepsilon,\frac12\right)
$$
as $\mathbb{E}(\zeta_i|X_0)=2$ for all $k$. Similarly
$$
\mathbb{P}(\zeta_i=k+1|X_0=k)=\frac{(k+1)q_k}{p_k+(k+1)q_k}\in\left(\frac12,\frac12+\varepsilon\right)
$$
for all $k$ large enough. In particular, for all $m$ large enough,
$$
\mathbb{P}(\zeta_i=1|U_m)\in\left(\frac12-\varepsilon,\frac12\right),
\hspace{2cm}
\mathbb{P}(\zeta_i>1|U_m)\in\left(\frac12,\frac12+\varepsilon\right).
$$
Therefore ($\zeta_1>1$ here means $\zeta_i>m$)
\begin{eqnarray*}
\left|\mathbb{P}_{U_m}\!\left(\tau_{U_m}^{k-1}>L\right)-\frac12\right|
&\le&\mathbb{P}(\zeta_i=1|U_m)\mu(U_m)
+\!\left|\mathbb{P}(\zeta_i=1|U_m)-\frac12\right|\\
&&+\mathbb{P}(\zeta_i>1|U_m)\frac{L}m
+\!\left|\mathbb{P}(\zeta_i>1|U_m)-\frac12\right|\\
&\le&4\varepsilon
\end{eqnarray*}
for all $m$ large enough so that in particular also $L/m<\varepsilon$ and $\mu(U_m)<\varepsilon$.
The first term on the RHS comes from the events that re-enter $U_m$  after exiting
and the third term accounts for the probability that the block of length $\zeta_i$ does not
cover the entire interval $(0,L]$.
Consequently
$$
\hat\alpha_k(L)=\lim_{m\to\infty}\mathbb{P}_{U_m}\!\left(\tau_{U_m}^{k-1}>L\right)=\frac12
$$
for all $k\ge2$ and for all $L$ (trivially $\hat\alpha_1=1$). Consequently $\hat\alpha_k=\frac12$ for all $k=1, 2,\dots$.
Moreover we also obtain that $\alpha_1=\frac12$ and $\alpha_k=0$ for all $k\ge2$.

Since the condition of Theorem~\ref{theorem.lambda} is not satisfied, we cannot
use it to obtain the probabilities $\lambda_k$ for the $k$-clusters.
We can however proceed more directly by noting that
$$
\mathbb{P}(Z^L>0)=(2L+1)\mu(U_m)(1-\mathcal{O}^*(1/m))-\mathcal{O}(\mu(U_m)^2g(L,\mu(U_m))),
$$
where  $Z^L=\sum_{j=-L}^{L}\ind_{U_m}\circ \sigma^j$ and $g(L,\mu(U_m))$
 is a function which is bounded and stays bounded as
$\mu(U_m)\to0$ ($\mathcal{O}^*$ expresses that the implied constant is $1$, i.e.\
$x=\mathcal{O}^*(\epsilon)$ if $|x|<\epsilon$)  Similarly we get that
$$
\mathbb{P}(Z^L=1)=(2L+1)\mu(U_m)+\mathcal{O}(\mu(U_m)^2g'(L,\mu(U_m)))
$$
where $g'$ is like $g$. Also
$$
\mathbb{P}(Z^L>1)=\mathcal{O}(\mu(U_m))
$$
where the implied constant depends on $L$.
Consequently
$$
\lambda_k(L)=\lim_{n\to\infty}\frac{\mathbb{P}(Z^L=k)}{\mathbb{P}(Z^L>0)}
=\mathcal{O}(1/L)\to0
$$
as $L\to\infty$ and therefore $\lambda_k=0$ for all $k\ge2$. For $k=1$ we obtain
$$
\lambda_1(L)=\lim_{n\to\infty}\frac{\mathbb{P}(Z^L=1)}{\mathbb{P}(Z^L>0)}
=1.
$$
This does not square with the statement of Theorem~\ref{theorem.lambda}
since the we have masses that are wandering off to infinity.


\subsubsection{Arbitrary parameters} We use an example which is similar to
Smith's to show that any sequence of parameters $\lambda_k$ can be realised
as long as the expected value is finite.
As above let $Y$ be an $\mathbb{N}$-valued random variable with
probability distribution $\gamma_k=\mathbb{P}(Y=k)$.
Let $\lambda_k, k=1,2,\dots$, be a sequence of parameter values
so that $\sum_{k=1}^\infty\lambda_k=1$ and $\sum_{k=1}^\infty k\lambda_k<\infty$.
As above we define the regenerative process $X_j, j\in\mathbb{Z}$ by parsing
the sequence of $\vec{X}=(\dots,X_{-1},X_0,X_1,\dots)$
into blocks of lengths $\zeta_i\in\mathbb{N}$ so that the sequence
of integers $N_i$ which indicates the heads of runs satisfy $N_{i+1}=N_i+\zeta_i$.
Then $X_{N_i}=k$ with probability $\gamma_k$ and $\mathbb{P}(\zeta_i=j)=\lambda_j$.
That means that blocks of the symbol $k$ which are of length $j$ are chosen with
the given probability $\lambda_j$. Put $\Omega=\{\vec{X}\}$.

As before, let $U_m=\{\vec{X}\in \Omega: X_0>m\}$.
For $\vec{X}\in\Omega$ let $i$ be so that
$N_i\le0<N_{i+1}$. Then $X_0$ belongs to a block of length $\zeta_i=N_{i+1}-N_i$.
This implies
$$
\mathbb{P}(\zeta_i=\ell)=\frac{\ell\lambda_\ell}{\sum_{s=1}^\infty s\lambda_s}
$$
Also
$\mathbb{P}(X_0=X_1=\cdots X_{k-1}\not=X_k|\zeta_i=\ell)=1/\ell$
and consequently for $k<m$:
$$
\alpha_k(L,U_m)=\frac{\sum_{\ell=k}^\infty\lambda_\ell}{\sum_{s=1}^\infty s\lambda_s}
+\mathcal{O}(L\mu(U_m))
$$
where the error terms expresses the likelyhood for entering the set $U_m$ after
the $\zeta_i$-block of being inside $U_m$. Taking a limit $m\to \infty$ we obtain
$$
\alpha_k=\lim_{L\to\infty}\alpha_k(L)=\frac{\sum_{\ell=k}^\infty\lambda_\ell}{\sum_{s=1}^\infty s\lambda_s}.
$$
In particular if $k=1$ we get
$\alpha_1=1/\sum_{s=1}^\infty s\lambda_s=1/\mathbb{E}(\zeta_i)$ as $\sum_{\ell=1}^\infty\lambda_\ell=1$.
This is the relation to be expected in general, where the extremal index $\alpha_1$ is
the reciprocal of the expected value of the cluster length.

\subsection{Periodic points}\label{section.periodic.points}

 For a set $U\subset\Omega$ we write $\tau(U)=\inf_{y\in U}\tau_U(y)$ for
 the {\em period} of $U$. In other words,  $U\cap T^{-j}U=\varnothing$ for
 $j=1,\dots,\tau(U)-1$ and $U\cap T^{-\tau(U)}U\not=\varnothing$.
 Let us now consider a sequence of nested sets $U_n\subset \Omega$
 so that $U_{n+1}\subset U_n\forall n$ and $\bigcap_nU_n=\{x\}$ a single point
 $x$. Then we have the following simple result which is independent of the
 topology or an invariant measure on $\Omega$.

 \begin{lem} Let $U_n\subset\Omega$ be so that $U_{n+1}\subset U_n\forall n$
 and $\bigcap_nU_n=\{x\}$ for some $x\in\Omega$.
 Then the sequence $\tau(U_n)$, $n=1,2,\dots$ is bounded if and only if
 $x$ is a periodic point.
 \end{lem}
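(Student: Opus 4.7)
The plan splits along the two implications. For the easy direction ($\Leftarrow$), since $\{x\}=\bigcap_n U_n$ the point $x$ itself lies in every $U_n$; if $p$ denotes its minimal period then $T^p x = x\in U_n$ forces $\tau_{U_n}(x)\le p$, so $\tau(U_n)\le p$ for all $n$.

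For the converse ($\Rightarrow$), suppose $\tau(U_n)\le M$ uniformly. My plan is a pigeonhole step followed by a continuity step. For each $n$ the definition of $\tau$ furnishes $y_n\in U_n$ and $k_n\in\{1,\dots,M\}$ with $T^{k_n}y_n\in U_n$, and since $k_n$ takes only finitely many values, one value $k$ must recur along a subsequence $n_j$, giving $U_{n_j}\cap T^{-k}U_{n_j}\ne\varnothing$. I would then exploit the nesting $U_{n+1}\subset U_n$ to upgrade this to $U_n\cap T^{-k}U_n\ne\varnothing$ for every $n$: given any $n$, choose $n_j\ge n$ and observe $U_{n_j}\cap T^{-k}U_{n_j}\subset U_n\cap T^{-k}U_n$.

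The main obstacle is the final step: passing from the nonemptiness of every $U_n\cap T^{-k}U_n$ to the dynamical equality $T^k x = x$. I would pick $z_n\in U_n\cap T^{-k}U_n$ and invoke the implicit topological framework in which the paper operates: $\Omega$ carries a metric, $T$ is continuous, and the $U_n$ form a neighbourhood basis of $x$ (automatic in the manifold setting where $U_n=B_\rho(\Gamma)$ with $\Gamma=\{x\}$). Nesting with $\bigcap_n U_n=\{x\}$ then forces $z_n\to x$ and $T^k z_n\to x$, and continuity of $T^k$ gives $T^k x=\lim T^k z_n=x$, so that $x$ is periodic with period at most $M$. Purely set-theoretic nesting is not enough for this step (on $\Omega=\mathbb{Z}$ with $T$ the shift and $U_n=\{0\}\cup\{n,n+1,\dots\}$ one has $\tau(U_n)=1$ while $0$ is not periodic), so the parenthetical claim of independence of topology in the statement is best read as independence from a choice of invariant measure; continuity of $T$ together with $U_n$ shrinking to $\{x\}$ as a neighbourhood basis is tacitly used.
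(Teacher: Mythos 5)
Your proof is correct, but it takes a genuinely different route from the paper's, so a comparison is worthwhile. The paper first observes that $\tau_n=\tau(U_n)$ is non-decreasing (returns to smaller sets take at least as long), so boundedness forces $\tau_n$ to be eventually constant, equal to some $\tau_\infty$; it then notes that the sets $U_n\cap T^{-\tau_\infty}U_n$ are nonempty and nested and concludes that their intersection, which equals $\{x\}\cap T^{-\tau_\infty}\{x\}$, is nonempty, i.e.\ $T^{\tau_\infty}x=x$. You replace the monotonicity observation by a pigeonhole over the finitely many possible return values $k\le M$ (slightly weaker, but entirely sufficient), and, more importantly, you replace the final nested-intersection step by a continuity argument: witnesses $z_n\in U_n\cap T^{-k}U_n$ converge to $x$ together with their images $T^kz_n\in U_n$, so continuity of $T^k$ gives $T^kx=x$. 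What your version buys is exactly the point your $\mathbb{Z}$-shift example makes: nonemptiness of a decreasing intersection does not follow from nesting alone, so the lemma's parenthetical claim of independence from the topology cannot be taken literally, and the paper's own last step implicitly needs either compactness of the $U_n$ or precisely the hypotheses you make explicit ($T$ continuous and the $U_n$ shrinking to $x$ as a neighbourhood basis), which do hold in the intended setting $U_n=B_{\rho_n}(x)$ on a manifold. The cost of your route is a formally less general statement; the gain is that every step is justified, and your counterexample is a useful clarification of the scope of the lemma.
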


 \noindent {\bf Proof.} If we put $\tau_n=\tau(U_n)$ then $\tau_{n+1}\ge \tau_n$
 for all $n$. Thus either $\tau_n\to\infty$ or $\tau_n$ has a finite limit $\tau_\infty$.
 Assume $\tau_n\to\tau_\infty<\infty$. Then $\tau_n=\tau_\infty$ for all $n\ge N$, for some $N$,
 and thus $U_n\cap T^{-\tau_\infty}U_n\not=\varnothing$ for all $n\ge N$.
 Since the intersections $U_n\cap T^{-\tau_\infty}U_n$ are nested, i.e.\
 $U_{n+1}\cap T^{-\tau_\infty}U_{n+1}\subset U_n\cap T^{-\tau_\infty}U_n$
we get
 $$
\varnothing\not=\bigcap_{n\ge N}(U_n\cap T^{-\tau_\infty}U_n)
 =\bigcap_{n\ge N}U_n\cap\bigcap_{n\ge N} T^{-\tau_\infty}U_n
 =\{x\}\cap\{T^{-\tau_\infty}x\}
 $$
 which implies that $x=T^{\tau_\infty}x$ is a periodic point.
 Conversely, if $x $ is periodic then clearly the $\tau_n$ are bounded by its period.
 \qed

 Let us now compute the values $\lambda_\ell$. Assume $x$ is a periodic point
 with minimal period $m$, then
 $p_i^\ell=\mu_{U_n}(\tau^{\ell-1}_{U_n}=i)=0$ for $i<m$ and $m=\tau(U_n)$
 if $n$ is large enough. For $n$ large enough one has $\tau(U_n)=\tau_\infty=m$
 and therefore $U_n\cap\{\tau_{U_n}=m\}=U_n\cap T^{-m}U_n$.

 Assume the limit
  $p=p^2_m=\lim_{n\to\infty}\frac{\mu(U_{n}\cap T^{-m}U_n)}{\mu(U_n)}$
  exists, then one also has more generally
 $$
 p^\ell_{(\ell-1)m}=\lim_{n\to\infty}\frac{\mu(\bigcap_{j=1}^{\ell-1}T^{-jm}U_{n})}{\mu(U_n)}=p^{\ell-1}.
 $$

 All other values of $p^\ell_i$ are zero, that is $p^\ell_i=0$ if $i\not=(\ell-1)m$.
 Thus $\hat\alpha_\ell=p^\ell_{(\ell-1)m}=p^{\ell-1}$
  and consequently
 $$
 \alpha_\ell=\hat\alpha_\ell-\hat\alpha_{\ell+1}=(1-p)p^{\ell-1}
 $$
  which is a geometric distribution.
 This implies that the random variable $W$ is in the limit P\'olya-Aeppli distributed
 with the parameters $\lambda_k=(1-p)p^{k-1}$.

In particular  the extremal index here is $\alpha_1=1-\hat\alpha_2=1-p$.
\begin{remark}
As we anticipated in Remark \ref{PR},  the extremal index can be explicitly evaluated in some cases. Two examples are:
 For one-dimensional maps $T$  of Rychlik type with potential $\phi$ (with zero pressure) and equilibrium state
  $\mu_{\phi}$, if $x$ is a periodic point of prime period $m$, then we get Pitskel's value
  $\alpha_1=1-e^{\sum_{k=0}^{m-1}\phi(T^kx)}$, see~\cite{Pit91, HV09,  FFT13}.
  For piecewise multidimensional expanding maps $T$ considered in~\cite{Sau00},
  if $\xi$ is again  a periodic point of prime period $p$, then $\alpha_1=1-|\det D(T^{-p})(\xi)|,$
   see Corollary 4 in~\cite{FFT13} and also~\cite{AB10}.
 \end{remark}

 If $x$ is a non-periodic point, then $\tau_n=\tau(U_n)\to\infty$ as $n\to\infty$
 which implies that $\mathbb{P}(\tau_{U_n}\le L)=0$ for all $n$ large enough (i.e.\ when
 $\tau_n>L$), and therefore for all $k\ge2$ $\hat\alpha_k(L)\le\lim_n\mathbb{P}_{U_n}(\tau_{U_n}\le L)=0$.
 That is $\alpha_1=\lambda_1=1$ and $\alpha_k=\lambda_k=0\forall k\ge2$.
 The limiting return times distribution is therefore a regular Poisson distribution.

\section{Coupled map lattice} \label{synchronisation}
Let $T$ be a piecewise continuous map on the unit interval $[0,1]$. We want to consider
a map $\hat{T}$ on $\Omega=[0,1]^n$ for some integer $n$ which is given by
\begin{equation}\label{TMn}
\T(\vec{x})_i=(1-\gamma)T(x_i)+\gamma \sum_{j=1}^n M_{i,j}T(x_j)\qquad \forall\, i=1, 2,\dots,n,
\end{equation}
for $\vec{x}\in \Omega$, where $M$ is an $n\times n$ stochastic matrix and  $\gamma\in[0,1]$ is a
coupling constant. For $\gamma=0$ we just get the product of $n$ copies of $T$.
We assume that $T$ is a piece-wise  expanding map of the unit interval onto itself,  with a finite number
of branches, say $q,$ and which $T$ is assumed to be of class $C^2$ on the interiors of the domains of
injectivity $I_1,\dots, I_q,$ and extended by continuity to the boundaries. Whenever the coupling constant $\gamma=0$ the map $\T$ is the direct product of $T$ with itself; therefore $\T$ could be seen as a coupled map lattice (CML).
Let us denote by $U_k, k=1,\dots, q^n,$ the domains of local injectivity of $\hat{T}$.  By the previous
assumptions on $T$, there exist open sets $W_k\supset U_k$ such that $\T|_{W_k}$ is a $C^2$ diffeomorphism (on the image).  We will require that
$$
s:=\sup_k\sup_{\vec{x}\in \T (W_k)}||D\T|_{W_k}^{-1}(\vec{x})||< b<1,
$$
where
$b:=\sup_i\sup_{x\in T(A_i)}|DT|_{I_i}^{-1}(x)|,$ and $||\cdot||$ stands for the euclidean norm. We will write {\em dist} for the distance with respect to this norm. We will suppose that the map $\T$ preserve an absolutely continuous invariant measure $\mu$ which is moreover mixing.
Recall that  $\text{osc}(h,A):=\text{Esup}_{\vec{x}\in A}h(\ox)-\text{Einf}_{\ox\in A}h(\vec{x})$ for any
measurable set $A$: see the proof of Lemma \ref{L4} for a more detailed definition.  Finally  $\Le$ is the Lebsegue measure on $\Omega$.

Let
\begin{equation}\label{set}
S_{\nu}:=\{\vec{x}\in I^n: |x_i-x_j|\le \nu \forall i,j\}
\end{equation}
be the $\nu$-neighbourhood of the diagonal $\Delta$.
Then $\hat\alpha_{k+1}(L,S_\nu)=\mathbb{P}(\tau^k_{S_\nu}\le L|S_\nu)$.
The value $\hat\alpha_{k+1}$ is the limiting probability of staying in the neighborhood of the diagonal until  time
$k$ and as the strip $S_{\nu}$  collapses to the diagonal $\Delta$.

\begin{thrm} \label{theorem.synchronisation}
Let $\T:\Omega\to\Omega$ be a coupled map lattice over the uniformly expanding map $T:[0,1]\circlearrowleft$
 and assume
that the hypersurfaces of discontinuities are piecewise $C^{1+\alpha}$ and intersections
with the diagonal $\Delta$ are transversal.
Moreover suppose
the stochastic matrix $M$ has constant columns, that is $M_{i,j}=p_j$ for a
probability vector $\vec{p}=(p_1,\dots,p_n)$ and assume the map $\hat{T}$ satisfies Assumption (I) for any $\gamma\in [0,1].$

 Finally suppose  that the density $h$ of the invariant absolutely continuous probability measures
 $\mu$ satisfies
 $$
  \sup_{0<\eps\le \eps_0}\frac{1}{\eps}\int_0^1 \text{osc}(h, B_{\eps}((x)^n)\, dx<\infty,
    $$
    where  $(x)^n\in\Delta$ is the point on the diagonal  all of whose coordinates are equal to $x\in[0,1]$.

    Then
$$
 \hat{\alpha}_{k+1}
 =\lim_{L\to\infty}\lim_{\nu\to0}\hat\alpha_{k+1}(L,S_\nu)
 =\frac1{(1-\g)^{k(n-1)} \int_Ih((x)^n) \,dx}\int_I \frac{h((x)^n)}{|DT^k(x)|^{n-1}}\,dx
 $$
 and the limiting return times to the diagonal $\Delta$ are compound Poisson distributed
 with parameters $t\lambda_k$ where
 $\lambda_k=\frac1{1-\hat\alpha_2}(\hat\alpha_{k-1}-2\hat\alpha_k+\hat\alpha_{k+1})$ and $t>0$ is real.
\end{thrm}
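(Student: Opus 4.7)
The strategy is to exploit the simple form of $D\hat T$ on the invariant diagonal $\Delta$ imposed by the hypothesis that $M$ has constant columns $M_{i,j}=p_j$. Differentiating \eqref{TMn} at a diagonal point gives
$$
D\hat T((x)^n) = T'(x)\,A_\gamma,\qquad A_\gamma = (1-\gamma)I + \gamma\,\vec{1}\,\vec{p}^{\,T},
$$
and $A_\gamma$ preserves the splitting $\mathbb{R}^n = \mathbb{R}\vec{1}\oplus E_\perp$, where $E_\perp=\{\vec v:\vec p\cdot\vec v=0\}$, acting as the identity on the tangent line to $\Delta$ and as multiplication by $(1-\gamma)$ on the $(n-1)$-dimensional transverse hyperplane $E_\perp$. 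Hence $D\hat T^k((x)^n)= DT^k(x)\, A_\gamma^{\,k}$ dilates tangentially by $DT^k(x)$ and each transverse direction by $(1-\gamma)^k DT^k(x)$; the CML expansion assumption $s<b<1$ forces $(1-\gamma)|T'|>1$, so this transverse factor grows with $k$ and the last iterate provides the binding transverse constraint among $j=0,\dots,k$.

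The first step is a change of coordinates $\vec x\mapsto(\bar x,\vec y)$ with $\bar x=\vec p\cdot\vec x$ and $\vec y=\vec x-(\bar x)^n\in E_\perp$, in which a Taylor expansion in $\vec y$ yields
$$
\hat T(\bar x,\vec y) = \bigl(T(\bar x)+O(|\vec y|^2),\;(1-\gamma)T'(\bar x)\vec y + O(|\vec y|^2)\bigr),
$$
with the error controlled by $\|T\|_{C^2}$. Iterating $k$ times, applying standard distortion bounds for the piecewise expanding $T$, and discarding the preimages of the discontinuity hypersurfaces, I would show that outside a set of $\mu$-measure $o(\nu^{n-1})$ one has
$$
\bigcap_{j=0}^k \hat T^{-j}S_\nu \cap \{\bar x = x_0\} = \Bigl\{\vec y\in E_\perp: \max_{i,j}|y_i-y_j|\le \tfrac{\nu}{(1-\gamma)^k|DT^k(x_0)|}\Bigr\}.
$$
Disintegrating $d\mu=h\,d\Le$ along $\Delta$ and writing Lebesgue as the product of $d\bar x$ with the transverse $(n-1)$-dimensional measure, the transverse volume of the fibre is $C_n\bigl(\nu/((1-\gamma)^k|DT^k(\bar x)|)\bigr)^{n-1}$. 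This gives
$$
\mu\!\left(\bigcap_{j=0}^k\hat T^{-j}S_\nu\right) = C_n\nu^{n-1}\!\int_I \frac{h((x)^n)}{((1-\gamma)^k|DT^k(x)|)^{n-1}}\,dx + o(\nu^{n-1}),
$$
and similarly $\mu(S_\nu)=C_n\nu^{n-1}\int_I h((x)^n)\,dx + o(\nu^{n-1})$. Dividing and sending $\nu\to 0$ yields the stated formula for $\hat\alpha_{k+1}$; the limit in $L$ is cosmetic since the answer is $L$-independent.

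The main technical hurdle will be replacing $h$ by its diagonal trace $h((x)^n)$ uniformly over these thin tubes. The hypothesis $\sup_{0<\eps\le\eps_0}\eps^{-1}\int_0^1 \text{osc}(h,B_\eps((x)^n))\,dx<\infty$ is tailored precisely to this: taking $\eps$ slightly larger than the transverse tube radius $\nu/((1-\gamma)^k|DT^k(x)|)$ bounds the oscillation-induced error by $O(\eps\cdot\nu^{n-1})=o(\nu^{n-1})$. Bookkeeping for the bad sets where the $C^2$ linearisation of $\hat T$ fails is handled by the transversality of the discontinuity hypersurfaces to $\Delta$, which forces their intersection with a tube of width $\nu$ to have measure $o(\nu^{n-1})$; Assumption~(I) for $\hat T$ additionally controls overlapping preimages during the iteration. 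I expect this uniform control of the tube geometry through $k$ iterations to be the most delicate part of the argument.

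Finally, to deduce the compound Poisson convergence I would apply Theorem~\ref{thm1} with $U=S_\nu$ and verify its hypotheses. Assumption~(I) is given. For piecewise expanding coupled lattice maps the absolutely continuous invariant measure $\mu$ enjoys (at least polynomial) decay of correlations~(II) via the quasi-compactness of the transfer operator on a suitable functional space. Assumption~(III) follows from standard distortion and $n$-cylinder diameter estimates for the join partition of $\hat T$. The dimension conditions~(IV)--(V) hold with $d_0=d_1=u_0=n-1$ since $\mu(S_\nu)\asymp\nu^{n-1}$ and the unstable leaves fill $\Omega$. The annulus condition~(VI) holds with $\eta=\beta=1$, since $S_{\rho+r}\setminus S_{\rho-r}$ has transverse thickness $2r$ and $\mu(S_{\rho+r}\setminus S_{\rho-r})/\mu(S_\rho)=O(r/\rho)$. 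Theorem~\ref{thm1} then yields compound Poisson convergence of $\xi^t_{S_\nu}$, and Theorem~\ref{theorem.lambda} expresses the cluster probabilities $\lambda_k$ in terms of the $\hat\alpha_\ell$ computed above (via $\alpha_k=\hat\alpha_k-\hat\alpha_{k+1}$ and $\lambda_k=(\alpha_k-\alpha_{k+1})/\alpha_1$), giving the stated formula.
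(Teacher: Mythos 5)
Your computation of $\lim_{\nu\to0}\mu(S_\nu^k)/\mu(S_\nu)$ follows essentially the paper's Lemma~\ref{L4} (spectral splitting of $B=(1-\gamma)\,\mathrm{id}+\gamma M$ into the diagonal direction and the transverse hyperplane, linearisation of $\hat{T}$ along $\Delta$, fibrewise transverse volume with the binding constraint at $\ell=k$, and the oscillation hypothesis to replace $h$ by its diagonal trace), and your final appeal to Theorem~\ref{thm1} together with Theorem~\ref{theorem.lambda} is also the paper's route. However, there is a genuine gap at the point where you dismiss the $L$-limit as ``cosmetic''. By definition $\hat\alpha_{k+1}(L,S_\nu)=\mathbb{P}(\tau^k_{S_\nu}\le L\,|\,S_\nu)$ counts orbits with at least $k$ returns to $S_\nu$ within time $L$, and these returns need not occur at consecutive times; what you compute, $\lim_{\nu\to0}\mu\bigl(\bigcap_{j=0}^k\hat{T}^{-j}S_\nu\bigr)/\mu(S_\nu)$, is therefore a priori only a lower bound for $\hat\alpha_{k+1}$ (it is the quantity the paper calls $\hat\beta_{k+1}$). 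The missing step --- the content of the paper's Lemma~\ref{lemma.equality} --- is to show that for fixed $L$ the conditional measure of points of $S_\nu$ whose orbit exits $S_\nu$ and re-enters before time $L$ tends to zero as $\nu\to0$, i.e.\ that within a cluster all returns are consecutive and $\Delta$ behaves like a fixed point.

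This identification is not automatic and uses the hypotheses in an essential way: away from the $r$-neighbourhoods, $r\sim\nu$, of the finitely many (by transversality) intersection points of the discontinuity set $\mathcal{D}^k$, and of the images of $\mathcal{D}$, with $\Delta$, the map $\hat{T}$ is open near diagonal points, $\Delta$ is forward invariant, and the inverse branches contract by a factor less than one; hence if $\hat{T}^m(x)\in S_\nu$ then pulling back along the relevant inverse branch gives $\hat{T}^{m-1}(x)\in S_\nu$, and by induction all earlier iterates lie in $S_\nu$, the discarded bad sets having measure $o(\mu(S_\nu))$. Some argument of this type must be supplied before your formula can be claimed for $\hat\alpha_{k+1}$ rather than for $\hat\beta_{k+1}$. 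Two minor further points: Assumption~(IV) requires strict inequalities, so take $d_0<n-1<d_1$ (and $u_0<n-1$) with a little slack rather than $d_0=d_1=u_0=n-1$; and the use of Theorem~\ref{theorem.lambda} requires $\sum_k k\hat\alpha_k<\infty$, which here follows from the exponential decay of $\hat\alpha_k$ guaranteed by $(1-\gamma)\inf|DT|>1$.
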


\begin{remark}
If $|DT|$ is constant, as for instance for the doubling map, then we obtain
$\hat\alpha_{k+1}=\left((1-\gamma)|DT|\right)^{-k(n-1)}$.
This implies that the probabilities
$\alpha_k=\hat\alpha_k-\hat\alpha_{k+1}$ are geometric and, by extension,  also
$$
\lambda_k=\left((1-\gamma)|DT|\right)^{-(k-1)(n-1)}\left(1-\left((1-\gamma)|DT|\right)^{-(n-1)}\right).
$$
 This means that the cluster sizes are geometrically distributed and
therefore the limiting return times distribution is P\'olya-Aeppli.
If $|DT|$ is non-constant then in general we cannot expect the probabilities $\alpha_k$ and $\lambda_k$
to be geometric, which implies that in the generic case, the limiting return times
distribution is not P\'olya-Aeppli. This should clarify a remark made in~\cite{FGGV}, Section 6.
\end{remark}

\begin{remark}
We now give an example of a map verifying Assumption~(I). Suppose the map $T$ is defined on
the unit circle as $T(x)=ax\mod 1$, with $a\in \mathbb{Z}$. Then, by using the quantities
$M$ and $B$ introduced in the proof of Theorem \ref{theorem.synchronisation},
it is easy to see that $\hat{T}^k(\vec{x})=B^k(a^k x_1\mod 1, \cdots, a^k x_n\mod 1)^T$,
and therefore the images of the $k$-cylinders will be the whole space.
\end{remark}

 For $k=1$ a proof already appeared in \cite{FGGV}; the proof which we give here is
considerably simpler and easily adaptable to other coupled map lattices. In particular,
instead of using the transfer operator to determine the measure of $S_\nu^k$ (below)
we use the tangent map of the coupled map in the neighbourhood of the diagonal.

Let us put
$$
S_\nu^k
=\bigcap_{\ell=0}^k\hat{T}^{-\ell}S_\nu
=\left\{\vec{x}\in \Omega: |(\T^\ell(\vec{x}))_i-(\T^\ell(\vec{x}))_j|< \nu, \ell=0,\dots,k\right\}
$$
as the set of points in $S_\nu$ which for $k-1$ iterates of $\hat{T}$ stay in the $S_\nu$-neighbourhood
of the diagonal $\Delta$. We proceed in two steps.

\begin{lem}\label{L4}
Under the assumption of Theorem~\ref{theorem.synchronisation} we get
$$
 \hat{\beta}_{k+1}
 :=\lim_{\nu\to0} \frac{\mu(S_\nu^k)}{\mu(S_{\nu})}
 =\frac1{(1-\g)^{k(n-1)} \int_Ih((x)^n) \,dx}\int_I \frac{h((x)^n)}{|DT^k(x)|^{n-1}}\,dx.
 $$
\end{lem}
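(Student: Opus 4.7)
The plan is to linearize $\T$ along the diagonal $\Delta$, exploit the contractive/neutral spectrum of $D\T$ there, and reduce both $\mu(S_\nu^k)$ and $\mu(S_\nu)$ to one-dimensional integrals along $\Delta$.

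First I would differentiate the definition~\eqref{TMn} at a diagonal point. Using $M_{ij}=p_j$ and $\sum_j p_j=1$ one gets $\T((x)^n)=(T(x))^n\in\Delta$ and
$$D\T((x)^n)=T'(x)\,B,\qquad B=(1-\g)I+\g\,\mathbf 1\vec p^{\,T}.$$
The matrix $B$ has eigenvalue $1$ on the diagonal direction $\mathbf 1\mathbb R$ and eigenvalue $1-\g$ on the $(n-1)$-dimensional transverse subspace $V_\perp=\{v:\vec p\cdot v=0\}$; since the scalar factors commute with $B$, the chain rule gives $D\T^k((x)^n)=DT^k(x)\,B^k$, so the transverse Jacobian is $(1-\g)^{k(n-1)}|DT^k(x)|^{n-1}$.

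Next I would introduce tubular coordinates $\Phi(x,v)=(x)^n+v$ with $x\in I$ and $v\in V_\perp$ small. In these coordinates the transverse component of $\T^\ell(\Phi(x,v))$ equals $(1-\g)^\ell DT^\ell(x)\,v$ to leading order in $|v|$. The uniform expansion hypothesis on $\T$ (together with $\sigma_{\min}(B)=1-\g$) forces $(1-\g)T'(x)>1/b>1$ on $\Delta$, so $(1-\g)^\ell|DT^\ell(x)|$ is strictly increasing in $\ell$, and among the constraints $\T^\ell(\vec x)\in S_\nu$, $\ell=0,\dots,k$, the binding one is $\ell=k$. Writing $|v|_*:=\max_{i,j}|v_i-v_j|$, it follows that, up to a set of Lebesgue measure $O(\nu^n)$,
$$S_\nu^k=\Phi\bigl(\{(x,v)\in I\times V_\perp:|v|_*\le\nu/[(1-\g)^k|DT^k(x)|]\}\bigr).$$
The transversality of the discontinuity surfaces of $\T^k$ with $\Delta$, combined with the piecewise $C^{1+\alpha}$ hypothesis, is what keeps the ``bad'' set where this linearization fails of Lebesgue measure $O(\nu^n)$.

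Then I would apply Fubini in the tubular coordinates. Since $|\cdot|_*$ is a norm on $V_\perp$, the $V_\perp$-volume of $\{|v|_*\le r\}$ is $c_0 r^{n-1}$ for a constant $c_0=c_0(n)$. The density $h$ is replaced by its boundary value $h((x)^n)$ at a slice-wise cost of at most $c_0 r^{n-1}\,\text{osc}(h,B_{c r}((x)^n))$ with $r\asymp\nu$; the hypothesis $\sup_\varepsilon\varepsilon^{-1}\int_0^1\text{osc}(h,B_\varepsilon((x)^n))\,dx<\infty$ then makes the total replacement error $O(\nu^n)$. Consequently
$$\mu(S_\nu^k)=\frac{c_1\nu^{n-1}}{(1-\g)^{k(n-1)}}\int_I\frac{h((x)^n)}{|DT^k(x)|^{n-1}}\,dx+O(\nu^n),\quad\mu(S_\nu)=c_1\nu^{n-1}\int_I h((x)^n)\,dx+O(\nu^n),$$
with the \emph{same} constant $c_1$ in both, since it depends only on $c_0$ and the Jacobian of $\Phi$. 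Dividing and letting $\nu\to0$ yields the claimed formula for $\hat\beta_{k+1}$.

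The principal obstacle is controlling the non-linear tail and the discontinuities uniformly in $x\in I$: the transversality of the singular surfaces with $\Delta$ is used precisely to confine the failure of the linearization to an $O(\nu^n)$ set, one full power of $\nu$ below the leading $\nu^{n-1}$, so that it vanishes upon division.
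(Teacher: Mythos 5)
Your argument is correct and essentially the same as the paper's: linearize $\hat T$ along the diagonal using that $B^\ell v=(1-\gamma)^\ell v$ on the transverse subspace, observe that by uniform expansion only the $\ell=k$ constraint binds, reduce $\mu(S_\nu^k)$ and $\mu(S_\nu)$ to slice integrals along $\Delta$ with a common shape constant, and use the oscillation hypothesis together with transversality of the discontinuity surfaces to replace $h$ by $h((x)^n)$ and to confine the failure of the linearization to a set of lower order. The only difference is cosmetic: you slice in tubular coordinates over $V_\perp=\{v:\vec p\cdot v=0\}$, whereas the paper slices by the first coordinate $x_1$; the ratio is unaffected.
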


\begin{proof}
The density $h$ of $\mu$ is the (unique)  eigenfunction of the transfer operator acting on the space of quasi-H\"older functions, see \cite{Kel}  and  especially \cite{Sau00}.
For all functions $h$ on $\Omega $ we define a  semi-norm $|h|_\alpha$ which, given two real numbers
$\eps_0>0$ and $0<\alpha\le 1$, writes
$$
|h|_{\alpha}:=\sup_{0<\eps\le \eps_0}\frac{1}{\eps^{_\alpha}}\int \text{osc}(h, B_{\eps}(\vec{x})) \,d\Le(\vec{x}).
$$
 We say that $h\in V_{\a}(\Omega)$ if $|h|_{\a}<\infty$. Although the value of $|h|_{\a}$ depends on $\eps_0,$
  the space $ V_{\a}(\Omega)$ does not. Moreover the value of $\eps_0$ can be  chosen in order to satisfy a few geometric constraints, like distortion, and to guarantee the Lasota-Yorke inequality on the
 Banach space $\mathcal{B}=(V_{\a}(\Omega), ||\cdot||_{\a})$, where the norm $||\cdot||_{\a}$ is defined as
 $||h||_{\a}:=|h|_{\a}+||h||_1$.
It has been shown~ \cite{Sau00} that $\mathcal{B}$ can continuously be injected into $\mathscr{L}^{\infty}$
since $||h||_{\infty}\le C_H ||h||_{\a}$, where $C_H=\frac{\max(1,\eps_0^{\a})}{Y_n \eps_0^n}$, being $Y_n$
the volume of the unit ball in $\mathbb{R}^n.$ The density in the neighborhood
 of the diagonal $\Delta$ is controlled by the assumption
$$
    h_D:=\sup_{0<\eps\le \eps_0}\frac{1}{\eps}\int_0^1 \text{osc}(h, B_{\eps}((x)^n)\, dx<\infty,
    $$
    where  $(x)^n\in\Delta$.
This means that we compute the oscillation in balls moving along the diagonal. By decreasing the radius $\eps$ the oscillation decreases; this plus Fatou Lemma implies that
$$
\lim_{\eps\rightarrow 0} \text{osc}(h, B_{\eps}((x)^n)=0,
$$
for Lebesgue almost all $x\in [0,1],$ which in turns implies that {\em $h$ is almost everywhere continuous
along the diagonal}.
Consequently, if $x_1$ is chosen almost everywhere in $[0,1]$ and the vector $(y_2, \dots, y_n)$ is chosen almost everywhere (with respect to the Lebesgue measure on $\mathbb{R}^{n-1}$) in a ball of radius $\nu<\eps_0$ around the point $(x_1)^n,$ we have $|h(x_1, y_2, \dots, y_n)-h((x_1)^n)|\le \text{osc}(h, B_{\nu}((x_1)^n)))$  and therefore
$$
\int  |h(x_1, y_2, \dots, y_n)-h((x_1)^n)|\,dx_1\le \int \  \text{osc}(h, B_{\eps}((x_1)^n))\,dx_1\le \nu \, h_D,
$$
which goes to $0$ when $\nu$ tends to zero.

For the neighbourhood $S_{\nu}:=\{\vec{x}\in I^n: |x_i-x_j|\le \nu \forall i,j\}$
of the diagonal $\Delta$,
we now want to compute the limit
$\hat{\beta}_{k+1}=\lim_{\nu\to0} \frac{\mu(S_\nu^k)}{\mu(S_{\nu})}$,
where as before $S_\nu^k=\bigcap_{j=0}^kS_\nu$,
which measures the limiting probability of staying in the neighborhood of the diagonal until  time
$k$ and as the strip $S_{\nu}$  collapses to the diagonal $\Delta$.
We begin to observe that the derivative $D\T$ has the form
$$
D\T=\left((1-\gamma)\id+\gamma M\right)D\mathbb{T},
$$
or $D\T=B\cdot D\mathbb{T}$, where $D\mathbb{T}( {T})$ is the diagonal $n\times n$ matrix
with diagonal entries $DT(x_1), DT(x_2),\dots, DT(x_n)$ and $B=(1-\gamma)\id+\gamma M$.

Let $\vec{u}=n^{-\frac12}(1, 1, \dots, 1)$ be the unit vector that spans the diagonal $\Delta$.
For a point $\vec{x}\in\Omega$ put $\vec{v}$ for the vector in $\mathbb{R}^n$ with
components $v_j=x_j-x_0$ where $x_0\in[0,1]$ is arbitrary. Then
$$
d(\vec{x},\Delta)=\left(|\vec{v}|^2-(\vec{v}\cdot\vec{u})^2\right)^\frac12
$$
is distance of $\vec{x}$ from the diagonal.

For $x_0\in[0,1]$ denote by $(x_0)^n\in\Delta$  the point on the diagonal  all of whose coordinates
are equal to $x_0$. Notice that $\T$ leaves the diagonal invariant as
$\T((x_0)^n)=(T(x_0))^n$.
If $\vec{v}$ and $(x_0)^n$ lie in the same region of continuity of
$\T^\ell$ then
$$
d(\T^\ell((x_0)^n),\T^\ell(\vec{x}))=D\T^\ell((x_0)^n)\vec{v}+\mathcal{O}(|\vec{v}|^2),
$$
where as before $\vec{v}=\vec{x}-(x_0)^n$ and  $D\T^\ell((x_0)^n)=DT^\ell(x_0)B^n$.
Consequently
$$
d(\T^\ell(\vec{x}),\Delta)=\left(|\vec{v}(\ell)|^2-(\vec{v}(\ell)\cdot\vec{u})^2\right)^\frac12,
$$
where ($\vec{v}=\vec{v}(0)$)
$$
\vec{v}(\ell)=\T^\ell(\vec{x})-\T^\ell((x_0)^n)
=DT^\ell(x_0)B^\ell\vec{v}+\mathcal{O}(|\vec{v}|^2).
$$
Using the linearisation of $\T$, the set $S_\nu^k$ is approximated by
$$
\tilde{S}_\nu^k=\left\{\vec{x}\in\Omega:
DT^\ell(x_0)\left(\left|B^\ell\vec{v}\right|^2-\left(B^\ell\vec{v}\cdot\vec{u}\right)^2\right)^\frac12\le\nu,
\vec{v}=\vec{x}-(x_0)^n\right\}.
$$

Let us consider the special case when $M$ has constant columns, that is
$M_{i,j}=p_j$, where $\vec{p}=(p_1,p_2,\dots,p_n)$ is a probability vector.
Then $M^\ell=M$ for $\ell=1,2, \dots$ and
$$
B^\ell=\left((1-\gamma)\id+\gamma M\right)^\ell
=(1-\gamma)^\ell\id+\left(1-(1-\gamma)^\ell\right)M
$$
which yields
$$
B^\ell\vec{v}=(1-\gamma)^\ell\vec{v}+\left(1-(1-\gamma)^\ell\right)\sqrt{n}(\vec{v}\cdot\vec{p})\vec{u},
$$
as $M\vec{v}=\sqrt{n}(\vec{v}\cdot\vec{p})\vec{u}$.
Thus
$$
|B^\ell\vec{v}|^2-\left((B^\ell\vec{v})\cdot\vec{u}\right)^2
=(1-\gamma)^{2\ell}\left(|\vec{v}|^2-V^2\right),
$$
 where $V=\sum_{j=1}^nv_j$.
 If we can choose $x_0=\frac1n\sum_{j=1}^nx_j$ then $V=0$ and
the distance $\left(|B^\ell\vec{v}|^2-\left((B^\ell\vec{v})\cdot\vec{u}\right)^2\right)^\frac12$ is equal to
$(1-\gamma)^\ell|\vec{v}|$. For this the points $\vec{x}$ and $(x_0)^n$ have to lie in the same
connected partition element of continuity for $\T$.

Since $M_{i,j}=p_j\forall i,j$ and if we choose $x_0=\frac1n\sum_{j=1}^nx_j$ we obtain
$B^\ell\vec{v}\cdot\vec{u}=0$ and
$$
\left|B^\ell\vec{v}\right|
=(1-\gamma)^{\ell}|\vec{v}|
=(1-\gamma)^{\ell}d(\vec{x},\Delta).
$$
Consequently
$$
|\vec{v}(\ell)|=(1-\gamma)^\ell |DT^\ell(x_0)|\cdot|\vec{v}|
$$
and $d(\hat{T}^\ell\vec{x},\Delta)=(1-\gamma)^\ell |DT^\ell(x_0)|d(\vec{x},\Delta)+o(d(\vec{v},\Delta))$.
Therefore in linear approximation
$$
\tilde{S}_\nu^k=\left\{\vec{x}\in\Omega:
 d(\vec{x},\Delta)\le\frac{\nu}{DT^\ell(x_1)(1-\gamma)^\ell}, \ell=0,1,\dots,k\right\}
$$
and since $T$ is expanding only the term $\ell=k$ is relevant.

Denote by $\mathcal{D}^k$ the set of discontinuity points for $\hat{T}^\ell$ for $\ell=1,\dots,k$.
We assume that  $\mathcal{D}^k$ is a union of piecewise smooth hyper surfaces which
intersect the diagonal $\Delta$ transversally. Then $\mathcal{D}^k\cap\Delta=\{(y_1)^n,(y_2)^n,\dots, (y_m)^n\}$
consists of finitely many points $(y_j)^n\in\Delta$. For each $j$ denote by
$\varphi_j=\angle(\Delta,\mathcal{D}^k)$ the angle between $\Delta$ and $\mathcal{D}^k$ at the point
of intersection $(y_j)^n$. Clearly the angles $\varphi_j$ are bounded away
from $0$ and we can put $r=2\nu(\cot\varphi+1)$ where $\varphi=\min_j\varphi_j$.
If we put $\Delta_\nu^k=\Delta\setminus\bigcup_jB_r((y_j)^n)$ then
for all $\nu$ small enough $B_\nu(\Delta_\nu^k)\cap\mathcal{D}^k=\varnothing$.

In order to compute $\mu(S_\nu^k)$ and $\mu(S_\nu)$ put
$$
S_\nu^k(x_1)=\left\{(x_2,x_3,\dots,x_n)\in [0,1]^{n-1}: |T^\ell(x_1)-T^\ell(x_j)|\le\nu, j=2,\dots,n,\;\;\ell=1,\dots,k\right\}.
$$
Then $S_\nu^\ell=\bigcup_{x_1\in[0,1]}\{x_1\}\times S_\nu^\ell(x_1)$ for $\ell=1,\dots,k$.
In the same fashion we can look at the linear appproximation and put
$$
\tilde{S}_\nu^k(x_1)
=\left\{(x_2,x_3,\dots,x_n)\in [0,1]^{n-1}: |DT^\ell(x_1)|\cdot|x_1-x_j|\le\nu, j=2,\dots,n,\;\;\ell=1,\dots,k\right\}.
$$
By the $C^2$-regularity of the maps one obtains
$$
\int_{S_\nu^k(x_1)}\,dx_2\cdots dx_n
=(1+\mathcal{O}(\nu))\int_{\tilde{S}_\nu^k(x_1)}\,dx_2\cdots dx_n
=(1+\mathcal{O}(\nu))\left(\frac{2\nu}{(1-\gamma)|T^k(x_1)|}\right)^{n-1}
$$

As we concluded above, we obtain by regularity of the density $h$ that
$$
\mu(S_\nu)=\int_{S_\nu}h(\vec{x})\,d\vec{x}
=(1+o(1))\int_{S_\nu}h((x_1)^n)\,d\vec{x}
$$
where the second integral is
$$
\int_{S_\nu}h((x_1)^n)\,d\vec{x}
=\int_{[0,1]}\int_{S_\nu^0(x_1)}h((x_1)^n)\,dx_2\cdots dx_n \,dx_1
=\int_{[0,1]}h((x_1)^n)(2\nu)^{n-1} \,dx_1
$$
as $\int_{S_\nu^0(x_1)}dx_2\cdots dx_n=(2\nu)^{n-1} $.

Similarly we obtain
\begin{eqnarray*}
\mu(S_\nu^k)&=&(1+o(1))\int_{S_\nu^k}h((x_1)^n)\,d\vec{x}\\
&=&(1+o(1))\int_{[0,1]}\int_{S_\nu^k(x_1)}h((x_1)^n)\,dx_2\cdots dx_n \,dx_1\\
&=&(1+o(1))\int_{[0,1]}h((x_1)^n)\left(\frac{2\nu}{(1-\gamma)|T^k(x_1)|}\right)^{n-1} \,dx_1.
\end{eqnarray*}

Finally
 $$
 \hat{\beta}_{k+1}=\lim_{\nu\to0}\frac{\mu(S_\nu^k)}{\mu(S_\nu)}
 =\frac{\int_I \frac{h((x)^n)}{|DT^k(x)|^{n-1}}\,dx}{(1-\g)^{k(n-1)} \int_Ih((x)^n) \,dx}.
 $$
 \end{proof}

The second ingredient to Theorem~\ref{theorem.synchronisation} is the following lemmma which
establishes that all returns to $S_\nu$ within a cluster are of first order which makes
$\Delta$ look like a fixed point. That is $\hat\beta_k=\hat\alpha_K$:

\begin{lem} \label{lemma.equality}
Under the assumptions of Theorem~\ref{theorem.synchronisation}
$$
\hat{\alpha}_{k+1}=\lim_{\nu\to0} \frac{\mu(S_\nu^k)}{\mu(S_{\nu})}.
$$
\end{lem}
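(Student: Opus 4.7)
The plan is to prove the equality via two opposite inequalities: the easy direction $\hat\beta_{k+1}\le\hat\alpha_{k+1}$ from a direct containment, and the nontrivial direction $\hat\alpha_{k+1}\le\hat\beta_{k+1}$ by showing that the set of points in $S_\nu$ contributing to $\tau^k_{S_\nu}\le L$ but not sitting in $S_\nu^k$ has measure $o(\mu(S_\nu))$ as $\nu\to0$. For the easy direction I observe that if $\hat T^\ell x\in S_\nu$ for every $\ell=0,1,\dots,k$ then the $k$-th return time to $S_\nu$ is exactly $k$, so $S_\nu^k\subseteq\{x\in S_\nu:\tau^k_{S_\nu}(x)=k\}$; dividing by $\mu(S_\nu)$ and sending first $\nu\to0$ and then $L\to\infty$ yields $\hat\beta_{k+1}\le\hat\alpha_{k+1}$.

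For the reverse direction I will fix $L\ge k$ and show $\lim_{\nu\to0}\hat\alpha_{k+1}(L,S_\nu)=\hat\beta_{k+1}$, and then take $L\to\infty$. Setting $B^{k,L}_\nu:=\{\tau^k_{S_\nu}\le L\}\cap S_\nu\setminus S_\nu^k$, the identity $\hat\alpha_{k+1}(L,S_\nu)-\mu(S_\nu^k)/\mu(S_\nu)=\mu(B^{k,L}_\nu)/\mu(S_\nu)$ reduces the problem to bounding $\mu(B^{k,L}_\nu)$. Any $x\in B^{k,L}_\nu$ must have a missed visit $\hat T^m x\notin S_\nu$ at some $1\le m\le k$ (otherwise $x\in S_\nu^k$) together with a compensating later visit $\hat T^j x\in S_\nu$ at some $m<j\le L$ (in order to provide the $k$ returns in $[1,L]$), giving the covering
\[
B^{k,L}_\nu\;\subseteq\;\bigcup_{m=1}^{k}\bigcup_{j=m+1}^{L}E_\nu(m,j),\qquad E_\nu(m,j):=S_\nu\cap\hat T^{-m}S_\nu^c\cap\hat T^{-j}S_\nu.
\]
Since $L$ is fixed, it suffices to show $\mu(E_\nu(m,j))/\mu(S_\nu)\to0$ for each pair $(m,j)$ with $1\le m<j\le L$.

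To handle a single $E_\nu(m,j)$ I will write it as $(S_\nu\cap\hat T^{-j}S_\nu)\setminus(S_\nu\cap\hat T^{-m}S_\nu\cap\hat T^{-j}S_\nu)$ and recompute both measures by a direct adaptation of the linearization argument used in Lemma~\ref{L4}. The structural input is that, with $M$ having constant columns, $B^\ell=((1-\gamma)\id+\gamma M)^\ell$ acts as scalar multiplication by $(1-\gamma)^\ell$ on the $(n-1)$-dimensional subspace orthogonal to $\Delta$, so that near $\Delta$ one has $d(\hat T^\ell\vec x,\Delta)\asymp(1-\gamma)^\ell|DT^\ell(x_1)|\,d(\vec x,\Delta)$. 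Under the transverse expansion $(1-\gamma)|DT|>1$ (implicit in the theorem for $\hat\beta_{k+1}$ to be $\le1$), the constraint $d(\hat T^j\vec x,\Delta)\le\nu$ is strictly tighter than $d(\hat T^m\vec x,\Delta)\le\nu$ for every $m<j$, so the latter is automatic in the linear regime. Both measures in the difference thus reduce asymptotically to the same thin-tube integral $\int h((x_1)^n)\bigl(2\nu/((1-\gamma)^j|DT^j(x_1)|)\bigr)^{n-1}dx_1$, which divided by $\mu(S_\nu)$ converges to $\hat\beta_{j+1}$, and the two asymptotics cancel to give $\mu(E_\nu(m,j))/\mu(S_\nu)\to0$.

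The main difficulty will be rigorously controlling the linearization error and the contribution of points close to the discontinuity hypersurfaces $\mathcal D^j$ of $\hat T^j$. The $C^2$ regularity of $\hat T$ on each piece of continuity produces nonlinear corrections of size $O(\nu^2)$ per iterate, which for each fixed $j\le L$ are absorbed in the $o(\mu(S_\nu))$ error since $\mu(S_\nu)\asymp\nu^{n-1}$; the transversality of $\mathcal D^j\cap\Delta$ assumed in the theorem ensures that the set of points within $S_\nu$ within distance $\nu$ of $\mathcal D^j$ has measure of order $\nu^n=o(\mu(S_\nu))$. Summing the finitely many $(m,j)$ contributions yields $\mu(B^{k,L}_\nu)=o(\mu(S_\nu))$, and letting $L\to\infty$ completes the proof.
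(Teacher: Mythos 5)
Your overall architecture is fine and parallels the logic needed: the containment $S_\nu^k\subset\{\tau^k_{S_\nu}\le L\}\cap S_\nu$ (for $L\ge k$) gives the easy inequality, and covering $B^{k,L}_\nu$ by the finitely many sets $E_\nu(m,j)=S_\nu\cap\hat T^{-m}S_\nu^c\cap\hat T^{-j}S_\nu$, $1\le m<j\le L$, correctly reduces the lemma to showing $\mu(E_\nu(m,j))=o(\mu(S_\nu))$ for each fixed pair. The gap is in how you propose to prove that estimate. You want to compute both $\mu(S_\nu\cap\hat T^{-j}S_\nu)$ and $\mu(S_\nu\cap\hat T^{-m}S_\nu\cap\hat T^{-j}S_\nu)$ by ``a direct adaptation of the linearization argument of Lemma~\ref{L4}'', arguing that the time-$j$ constraint is tighter than the time-$m$ constraint ``in the linear regime''. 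But the expansion $\vec{v}(\ell)=DT^\ell(x_0)B^\ell\vec{v}+\mathcal{O}(|\vec{v}|^2)$ is valid only while the orbit segment stays in a fixed small neighbourhood of $\Delta$ and inside one continuity domain of $\hat{T}^j$; this is exactly why Lemma~\ref{L4} treats $S_\nu^k$, where \emph{all} intermediate iterates are constrained to $S_\nu$. The set $S_\nu\cap\hat T^{-j}S_\nu$, with intermediate times unconstrained, contains precisely the points whose orbit makes an order-one excursion away from $\Delta$ at time $m$ and re-enters the tube at time $j$ --- i.e.\ the points of $E_\nu(m,j)$; geometrically these sit near components of $\hat T^{-j}\Delta$ other than $\Delta$ itself, about which the linearization along the diagonal says nothing. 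So the sentence ``the latter is automatic in the linear regime, hence the two thin-tube integrals cancel'' assumes what must be proven: your computation by construction only sees the tube around $\Delta$ and therefore cannot bound the measure of the escape-and-return set, which is the entire content of this lemma (the tube measures themselves were already computed in Lemma~\ref{L4}).

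What is needed --- and what the paper does --- is a global backward argument that does not rely on staying in the linear regime: since $s=\sup_k\sup_{\vec{x}}\|D\hat{T}|_{W_k}^{-1}(\vec{x})\|<1$, every inverse branch of $\hat{T}$ is a uniform contraction, and $\Delta$ is forward invariant; hence from a visit $\hat{T}^j x\in S_\nu$ one pulls back step by step: the inverse branch sending $\hat{T}^j x$ to $\hat{T}^{j-1}x$ maps a suitable nearby diagonal point back into $\Delta$ and contracts distances, forcing $d(\hat{T}^{j-1}x,\Delta)\le s\,\nu<\nu$, and inductively $\hat{T}^\ell x\in S_\nu$ for all $\ell\le j$, so that (up to exceptional sets) $E_\nu(m,j)$ is empty. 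The exceptional sets are the points whose iterates land in $\mathcal{O}(\nu)$-neighbourhoods of the finitely many intersection points of the discontinuity surfaces (and of their images) with $\Delta$ --- the sets $G_1(\nu)$, $G_2(\nu)$ --- each of measure $o(\mu(S_\nu))$ by the transversality hypothesis, and only $j\le L$ backward steps are taken, so the total exceptional measure is $o(\mu(S_\nu))$. Your transversality remark controls a neighbourhood of $\mathcal{D}^j$ but does not substitute for this inverse-branch contraction step; without it the key estimate $\mu(E_\nu(m,j))=o(\mu(S_\nu))$ is unproved.
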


\begin{proof} We follow the proof of Proposition 5.3 in \cite{FGGV} adapted to our setting. We begin to consider again  the set $\Delta_\nu^k=\Delta\setminus\bigcup_jB_r((y_j)^n).$ The $\nu$-neighborhood of $\Delta,$ $\Delta_\nu^k$,  will be a subset of $S_{\nu}$ with empty intersection with the discontinuity surfaces $\mathcal{D}^k$ of the maps $\hat{T}^\ell$ for $\ell=1,\dots,k$. We put $G_1(\nu):=\bigcup_jB_r((y_j)^n).$ For reasons which will be clear in a moment, we now remove from the $\nu$-neighborhood of the diagonal, another set. Consider the intersection points $\{(z_1)^n,(z_2)^n,\dots, (z_l)^n\}$ of $\Delta$ with the images of the discontinuity surfaces $\mathcal{D}$ of $\hat{T}$ only, and as we did previously we introduce the set $G_2(\nu):=\bigcup_iB_{2r}((z_i)^n),$ where we double the radius to allow an upcoming  construction. Notice that with the choice of $r$ given above, we have that $\mu(G_1(\nu))=o(\mu(S_{\nu}),$ and $\mu(G_2(\nu))=o(\mu(S_{\nu}),$ when $\nu \rightarrow 0.$

Let us  take a point $x\in \Delta_\nu^k$ and  a neighborhood $\mathcal{O}(x)$ such that
$\mathcal{O}(x)\cap \Lambda\neq \varnothing,$  and
$\mathcal{O}(x)\cap (\mathcal{D}^k\cup \hat{T}^{-1}(\mathcal{D}^k)\cup \cdots \hat{T}^{-k}(\mathcal{D}^k))=\varnothing$.
With these assumptions, $\hat{T}^\ell$ for $\ell=1,\dots,k$
are open maps on $\mathcal{O}(x)$. In particular, $\hat{T}^k(\mathcal{O}(x))$
will be included in the interior of one of the $U_l$ and
 it will intersect $\Delta$ by the forward invariance of the latter. We now suppose that $\hat{T}^k(x)$ is in $S_{\nu}$ and we  prove that $\hat{T}^{k-1}(x)$ is in $S_{\nu}$ too. Let us call $D_*$ the domain of the function  $\hat{T}^{-1}_*$, namely the inverse branch of the map sending $\hat{T}^{k-1}(x)$ to  $\hat{T}^k(x). $ If the distance between $\hat{T}^k(x)$ and any point $z\in \hat{T}^k(\mathcal{O}(x))\cap \Delta,$  such that the segment $[ \hat{T}^k(x), z]$ is included in $D_*,$ is less than $\nu$, we have done since $\text{dist}(\hat{T}^{-1}_*(z),
 \hat{T}^{-1}_*(\hat{T}^k(x))=\text{dist}(\tilde{z},\hat{T^{k-1}}(x))\le \lambda \nu,$  where $\tilde{z}=\hat{T}^{-1}_*(z)\in \Delta.$ Notice that such a point $z\in \Delta$ should not be necessarily in $\hat{T}^k(\mathcal{O}(x)),$ provided the segment $[ \hat{T}^k(x), z]\in D_*$ and $\text{dist}(z,\hat{T^k}(x))\le \ \nu.$ What could prevent the latter conditions to happen is the presence of the boundaries of the domains of definition of the preimages of $\hat{T}$, which are the images of $\mathcal{D}.$
We should therefore avoid that $\hat{T}^k(x)$ lands in the set $G_2(\nu),$  which, with the choice of doubling the radius $r$, is enough large to allow  the point $\hat{T^k}(x)\in G_2(\nu)^c$ to be joined to $\Delta$ with a segment included in $D_*.$  We have therefore  to discard those points  $x\in S_{\nu}$ which are in  $\hat{T}^{-k}G_2({\nu})$ and, by invariance, the measure of those point is bounded from above by $\mu(G_2({\nu})).$  We now iterate backward the process to guarantee that $T^{k-2}(x)$ is in $S_{\nu}$ too. At this regard we must avoid again that  $T^{k-2}(x)\in G_2({\nu}),$ which means we have to remove a new portion of points of measure $\mu(G_2({\nu}))$  from $S_{\nu};$ at the end we will have $k$ times of this measure of order $o(\nu).$ In conclusion
 the points which are not in $\bigcup_{l=2}^k\hat{T}^{-l}G_2(\nu)\cap S_{\nu}\cap G_1(\nu) $ gives zero contribution  to the quantity $\mu(S^k_{\nu})$, while the measure of the remaining points divided by $\mu(S_{\nu})$ goes to zero for $\nu$ tending to zero.
\end{proof}

\begin{proof}[Proof of Theorem~\ref{theorem.synchronisation}]
 Let $\mu$ be the absolutely continuous invariant measure on $\Omega$.
By Lemma~\ref{lemma.equality} the values of $\hat\alpha_k$ are given by the expression
in the statement of the theorem. The parameters $t\lambda_k$ are then given by
Theorem~\ref{theorem.lambda} since the assumption $\sum_kk\hat\alpha_k<\infty$
is satisfied by uniform expansiveness which implies that $\hat\alpha_k$ decay
exponentially fast.

 In order to apply Theorem~\ref{thm1} it remains to verify
Assumptions (I)--(VI).
 Assumption~(IV) is satisfied for any $d_0<1<d_1$ arbitrarily close to $n-1$.
Since the unstable manifold is all of $\Omega$, Assumption~(V) is satisfied for
any $u_0<n-1$ arbitrarily close to $n-1$. Similarly, Assumption~(VI) is satisfied with
$\beta=\eta=1$. Assumption~(III) is satisfied as $T$ is uniformly expanding (III-i) is
trivially satisfied with $q=\infty$. (III-ii) follows from the regularity of the map and
(III-iii) is satisfied since the contraction is in fact exponential.
Assumption~(II) is satisfied by a result of Saussol~\cite{Sau00} where the the decay of correlations is
shown for functions of bounded variation vs $\mathscr{L}^1$. Since characteristic functions have bounded variation
we can take $\phi=\tilde\phi=\ind_{U}=\ind_{B_\rho(\Gamma)}$ in Section~\ref{est_R1_section}
 and since functions that are bounded in the supremum norm (as characteristic functions are)
are automatically in $\mathscr{L}^1$ the assumption is fulfilled.
\end{proof}

\noindent In the special case when the coupling constant $\gamma$
is equal to zero, then $\mu$ is the product measure of the absolutely continuous
$T$-invariant measure $\hat\mu$ on the interval $I=[0,1]$, that is
$\mu=\hat\mu\times\hat\mu\times\cdots\times\hat\mu$, $n$ times.
 Consequently the
density $h$ on the diagonal $\Delta$ is equal to $\hat{h}^n$, where
$\hat{h}=\frac{d\hat\mu}{dx}$. Then we conclude as follows:

\begin{cor} Let $\Omega=I^n$ and $T:\Omega\circlearrowleft$ be the $n$-fold product of
a uniformly expanding map $T: I\circlearrowleft$ with a.c.i.m\ $\hat\mu$ with
density $\hat{h}$. Then
$$
 \hat{\alpha}_{k+1}
 =\frac1{ \int_I\hat{h}^n(x) \,dx}\int_I \frac{\hat{h}^{n}(x)}{|DT^k(x)|^{n-1}}\,dx
 $$
 and in particular
 \begin{eqnarray*}
\lambda_{k}&=&\frac1{\alpha_1}(\hat\alpha_k-2\hat\alpha_{k+1}+ \hat{\alpha}_{k+2})\\
 &=&\frac{\alpha_1^{-1}}{ \int_I\hat{h}^n(x) \,dx}\int_I\hat{h}^{n}(x)\!
 \left(\frac1{|DT^k(x)|^{n-1}}-\frac2{|DT^{k+1}(x)|^{n-1}}+\frac1{|DT^{k+2}(x)|^{n-1}}\right)\,dx,
 \end{eqnarray*}
 where
 $$
 \alpha_1=1-\hat\alpha_2
 =\left( \int_I\hat{h}^n(x) \,dx\right)^{-1}\int_I\hat{h}^n(x)\!\left(1- |DT(x)|^{-(n-1)}\right)\,dx
 $$
 is the extremal index.
 \end{cor}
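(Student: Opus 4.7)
The plan is to obtain this Corollary as a direct specialization of Theorem~\ref{theorem.synchronisation} combined with the cluster-size formula from Theorem~\ref{theorem.lambda}. First I would verify that the hypotheses of Theorem~\ref{theorem.synchronisation} continue to hold for the $n$-fold product map $\hat T=T\times\cdots\times T$ and the product measure $\mu=\hat\mu\times\cdots\times\hat\mu$. With $\gamma=0$ the coupling matrix $M$ drops out of the definition of $\hat T$, so the requirement that $M$ have constant columns is vacuous (equivalently, any $\vec p$ can be used). The discontinuity hypersurfaces of $\hat T^k$ are finite unions of coordinate slices $\{x_i=c\}$ where $c$ is a discontinuity of $T^k$; each such slice meets the diagonal $\Delta$ transversally. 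Assumption~(I) for $\hat T$ is inherited from the corresponding property of $T$, since images of $n$-cylinders under $\hat T^n$ factor as products of $T^n$-images of one-dimensional cylinders, whose measures are uniformly bounded below.

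Next, I would identify the density on the diagonal. Since $\mu$ is a product, $h(x_1,\dots,x_n)=\prod_{i=1}^{n}\hat h(x_i)$, so $h((x)^n)=\hat h(x)^n$. The diagonal regularity condition $h_D<\infty$ reduces to the corresponding statement for $\hat h$, which holds because $\hat h$ lies in the quasi-Hölder space associated to $T$ (a one-dimensional version of~\cite{Sau00}). Substituting $\gamma=0$ and $h((x)^n)=\hat h(x)^n$ into the statement of Theorem~\ref{theorem.synchronisation}, the factor $(1-\gamma)^{k(n-1)}$ collapses to $1$, which gives the stated formula for $\hat\alpha_{k+1}$.

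For the expression for $\lambda_k$ I would use $\alpha_k=\hat\alpha_k-\hat\alpha_{k+1}$ together with Theorem~\ref{theorem.lambda} to write
$$
\lambda_k=\frac{\alpha_k-\alpha_{k+1}}{\alpha_1}=\frac{\hat\alpha_k-2\hat\alpha_{k+1}+\hat\alpha_{k+2}}{\alpha_1},
$$
and then substitute the integral expression for each $\hat\alpha_j$, using linearity of the integral to combine the three terms into the single integrand $|DT^k|^{-(n-1)}-2|DT^{k+1}|^{-(n-1)}+|DT^{k+2}|^{-(n-1)}$ against $\hat h^n$. The extremal index $\alpha_1=\hat\alpha_1-\hat\alpha_2=1-\hat\alpha_2$ then follows by evaluating the $\hat\alpha_{k+1}$-formula at $k=1$ and factoring $\hat h^n(x)$ out of the difference $1-|DT(x)|^{-(n-1)}$.

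The main obstacle is not computational; it is checking that the regularity and non-degeneracy conditions underlying Theorem~\ref{theorem.synchronisation} (and hence Theorem~\ref{thm1}) really do carry over to the product setup. Specifically, the transversality of the discontinuity surfaces with $\Delta$ and the uniform positivity of $\mu(\hat T^n\zeta)$ for $n$-cylinders $\zeta$ are mild consequences of the assumptions on $T$, but they are precisely what guarantees that the identification $\hat\beta_{k+1}=\hat\alpha_{k+1}$ from Lemma~\ref{lemma.equality} survives in the product case. Once these items are confirmed, the corollary is a bookkeeping substitution of $\gamma=0$ into the formulas of Theorems~\ref{theorem.synchronisation} and~\ref{theorem.lambda}.
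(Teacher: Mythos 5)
Your proposal is correct and follows essentially the same route as the paper: the paper obtains the corollary precisely by setting $\gamma=0$ in Theorem~\ref{theorem.synchronisation}, noting that $\mu$ is then the $n$-fold product of $\hat\mu$ so that $h((x)^n)=\hat h(x)^n$ and the factor $(1-\gamma)^{k(n-1)}$ disappears, and then computing $\lambda_k=(\hat\alpha_k-2\hat\alpha_{k+1}+\hat\alpha_{k+2})/\alpha_1$ and $\alpha_1=1-\hat\alpha_2$ via Theorem~\ref{theorem.lambda}. Your additional checks (transversality of the coordinate-slice discontinuity sets with $\Delta$, the diagonal oscillation condition for $\hat h^n$, Assumption~(I) for the product map) are sound and simply make explicit what the paper leaves implicit.
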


 \noindent For $n=2$ these formulas were derived by Coelho and Collet~\cite{CC94} Theorem~1.

\section{Aknowledgements}
\noindent SV thanks the {\em Laboratoire International Associ\'e LIA LYSM}, the INdAM (Italy) and the UMI-CNRS 3483, {\em Laboratoire  Fibonacci} (Pisa) where this work has been completed under a CNRS delegation. \\
NH thanks the University of Toulon and the Simons Foundation (award ID 526571) for support.


\end{document}